\newcommand{\fixed@sra}{$\vrule height 2\fontdimen22\textfont2 width 0pt\rightarrow$}
\newcommand{\diagarrow}{%
\mathrel{\text{\rotatebox[origin=c]{\numexpr45}{\fixed@sra}}}
}
\newtheorem{thm}{Theorem}[section]
\newtheorem*{thm*}{Theorem}
\newtheorem{lemma}[thm]{Lemma}
\newtheorem*{lemma*}{Lemma}
\newtheorem{prop}[thm]{Proposition}
\newtheorem{corollary}[thm]{Corollary}
\newtheorem{conjecture}[thm]{Conjecture}
\newtheorem{question}[thm]{Question}
\newtheorem{Definition}[thm]{Definition}
\newenvironment{definition}
{\begin{Definition}\rm}{\end{Definition}}
\newtheorem{Example}[thm]{Example}
\newenvironment{example}
{\begin{Example}\rm}{\end{Example}}
\newtheorem{Algorithm}[thm]{Algorithm}
\theoremstyle{definition}
\newtheorem{remark}[thm]{\textbf{Remark}}
\newcommand{\N}{\mathbb{N}}
\newcommand{\T}{\mathcal{T}}
\newcommand{\Z}{\mathbb{Z}}
\newcommand{\Prob}{\mathbb{P}}
\newcommand{\call}{\mathcal{L}}
\DeclareMathOperator{\im}{\mathrm{im}}
\newcommand{\inverse}{^{-1}}
\newcommand{\st}{\mathrm{st}}
\newcommand{\vphi}{\varphi}
\setlist[enumerate,1]{leftmargin=1.8em, itemsep=0mm, label=\textup{(}\arabic*\textup{)}}
\begin{document}
\title{On Promotion and Quasi-Tangled Labelings of Posets}

\author{Eliot Hodges}
\address{Department of Mathematics, Harvard University, Cambridge, MA 02138}
\email{eliothodges@college.harvard.edu}
\begin{abstract}
	In 2022, Defant and Kravitz introduced extended promotion (denoted $ \partial $), a map that acts on the set of labelings of a poset. Extended promotion is a generalization of Sch\"{u}tzenberger's promotion operator, a well-studied map that permutes the set of linear extensions of a poset. It is known that if $ L $ is a labeling of an $ n $-element poset $ P $, then $ \partial^{n-1}(L) $ is a linear extension. This allows us to regard $ \partial $ as a sorting operator on the set of all labelings of $ P $, where we think of the linear extensions of $ P $ as the labelings which have been sorted. The labelings requiring $ n-1 $ applications of $ \partial $ to be sorted are called \emph{tangled}; the labelings requiring $ n-2 $ applications are called \emph{quasi-tangled}. In addition to computing the sizes of the fibers of promotion for rooted tree posets, we count the quasi-tangled labelings of a relatively large class of posets called \emph{inflated rooted trees with deflated leaves}. Given an $ n $-element poset with a unique minimal element with the property that the minimal element has exactly one parent, it follows from the aforementioned enumeration that this poset has $ 2(n-1)!-(n-2)! $ quasi-tangled labelings. Using similar methods, we outline an algorithmic approach to enumerating the labelings requiring $ n-k-1 $ applications to be sorted for any fixed $ k\in\{1,\ldots,n-2\} $. We also make partial progress towards proving a conjecture of Defant and Kravitz on the maximum possible number of tangled labelings of an $ n $-element poset.
\end{abstract}
\maketitle
\section{Introduction}
\subsection{Background}
Let $ P $ be an $ n $-element poset, whose order relation we denote by $ <_P $. A \emph{labeling} of $ P $ is a bijection $ L:P\to[n] $ (where $ [n]=\{1,\ldots,n\} $). A labeling $ L $ is called a \emph{linear extension} if it preserves the order on $ P $, i.e.\ if for all pairs $ x,y\in P $ with $ x<_Py $ we have $ L(x)<L(y) $. Let $ \Lambda(P) $ be the set of all labelings of $ P $; let $ \call(P)\subset\Lambda(P) $ be the subset consisting of all linear extensions.

In \cite{MR190017,MR299539,MR0476842}, Sch\"{u}tzenberger introduced an intriguing bijection on $ \call(P) $ called \emph{promotion}. Promotion has connections with various topics in algebraic combinatorics and representation theory, as seen in \cite{MR871081,MR3983097,MR2519848,MR2557880,MR2515772}. 

In \cite{DK20}, Defant and Kravitz extended the promotion map to an operator $ \partial:\Lambda(P)\to\Lambda(P) $, not necessarily invertible, that is defined on all of $ \Lambda(P) $. When the poset is a chain, extended promotion is dynamically equivalent to the bubble-sort map studied in \cite{Knuth1973}. Promotion can also be described in terms of Bender-Knuth involutions (first introduced by Haiman \cite{MR1158783} as well as Malvenuto and Reutenauer \cite{MR1297379}); in \cite{DK20}, Defant and Kravitz extended these Bender-Knuth involutions to arrive at an equivalent ``toggle'' definition of extended promotion. The following results of \cite{DK20} are crucial properties of extended promotion: \begin{enumerate}
	\item When restricted to $ \call(P) $, $ \partial $ agrees with Sch\"{u}tzenberger's promotion operator. 
	\item If $ L $ is a labeling of an $ n $-element poset $ P $, then $ \partial^{n-1}(L)\in\call(P) $. 
\end{enumerate} Thus, (extended) promotion\footnote{Henceforth, ``promotion'' always refers to $ \partial $ rather than its restriction to $ \call(P) $.} may be regarded as a sorting operator, where linear extensions are considered ``sorted.'' Property (2) shows that promotion sorts every labeling after at most $ n-1 $ applications.

We define the \emph{sorting time} of a labeling $ L $ to be the smallest $ k\in\N $ such that $ \partial^k(L)\in\call(P) $. Defant and Kravitz mainly studied \emph{tangled} labelings---those labelings with sorting time $ n-1 $. In particular, they enumerated these tangled labelings for a large class of posets called \emph{inflated rooted forests}. They also studied \emph{sortable} labelings---those labelings $ L $ such that $ \partial(L)\in\call(P) $---and enumerated these sortable labelings for arbitrary posets.

\subsection{Outline and Summary of Main Results}
In \Cref{Background}, we present the main definitions and background results needed for the rest of the paper. In \Cref{Promotion Fibers}, we study the cardinality of $ \partial\inverse(L) $ for an arbitrary labeling $ L $. Our \Cref{fiber} gives a formula for $ |\partial\inverse(L)| $ when $ P $ is a rooted tree poset. We also study the {degree of noninvertibility} of promotion and give a sharp lower bound for this (\Cref{noninvertibility bound}). \Cref{Quasi-Tangled} contains an explicit enumeration of the labelings with sorting time $ n-2 $ for a large class of posets called {inflated rooted trees with deflated leaves} (\Cref{quasi tangled enumeration}). A corollary of this result is that an $ n $-element poset with a unique minimal element with the property that the minimal element has exactly one parent has $ 2(n-1)!-(n-2)! $ quasi-tangled labelings. In \Cref{sorting time n-k-1}, we present an algorithmic approach to enumerating the labelings of a rooted tree poset with sorting time $ n-k-1 $ for fixed $ k\in\{1,\ldots,n-2\} $, and in \Cref{Tangled}, we make partial progress (\Cref{(n-1)!}) on the following conjecture:
\begin{conjecture}[\cite{DK20}, Conjecture 5.1]\label{conj: (n-1)!}
	If $ P $ is an $ n $-element poset, then $ P $ has at most $ (n-1)! $ tangled labelings. 
\end{conjecture}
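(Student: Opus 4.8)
The plan is to attack \Cref{conj: (n-1)!} — and, more realistically, to obtain the partial result \Cref{(n-1)!} — by reducing the bound to a rigidity statement about which element a tangled labeling assigns the label $1$, followed by a short counting argument. Throughout, write $\T(P)$ for the set of tangled labelings of $P$. I would start from a dynamical reformulation: if $L'\notin\call(P)$ and $\partial(L)=L'$, then $L\notin\call(P)$ and the sorting time of $L$ equals one more than that of $L'$; iterating this together with property (2) shows that $L$ is tangled if and only if $\partial^{n-2}(L)\notin\call(P)$, equivalently if and only if $\partial^{n-2}(L)$ is sortable without being a linear extension. In other words, the tangled labelings are precisely the labelings lying at the maximal possible distance $n-1$ from $\call(P)$ in the functional graph of $\partial$, i.e.\ the $\partial^{n-2}$-preimages of the strictly sortable labelings.

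Next I would isolate the crucial structural input: \emph{if $L$ is tangled, then $L\inverse(1)$ is a maximal element of $P$.} I would prove this from the toggle description of $\partial$, observing that among the Bender--Knuth-type toggles comprising $\partial$ only the toggle on the labels $1$ and $2$ can move the label $1$; a case analysis of that toggle shows that when $L\inverse(1)$ is not maximal, one application of $\partial$ either leaves the label $1$ on a minimal element or moves it strictly downward in $P$, and in either case an auxiliary bound on the sorting time (a poset analogue, in the spirit of property (2), of the elementary fact that bubble-sorting a permutation $w$ takes $\max_i(w\inverse(i)-i)$ passes) forces the sorting time of $L$ to be at most $n-2$. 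For a chain this recovers the familiar fact that $L$ is tangled exactly when it places the label $1$ on the top element. Proving this lemma cleanly is, I expect, the technical heart of the whole argument.

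Granting the lemma, I would write $\T(P)=\bigsqcup_z\T_z(P)$ over the maximal elements $z$ of $P$, where $\T_z(P)=\{L\in\T(P):L\inverse(1)=z\}$, and bound each piece by the map $\T_z(P)\hookrightarrow\Lambda(P\setminus\{z\})$ sending $L$ to the labeling $v\mapsto L(v)-1$ of the $(n-1)$-element poset $P\setminus\{z\}$. This map is well defined because $L(z)=1$, and it is injective because $z$ together with the resulting labeling of $P\setminus\{z\}$ recovers $L$; hence $|\T_z(P)|\le|\Lambda(P\setminus\{z\})|=(n-1)!$. Consequently $|\T(P)|\le(n-1)!$ whenever $P$ has a unique maximal element, which already settles \Cref{conj: (n-1)!} for every poset with a $\hat 1$, and the bound is sharp, as chains show.

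The hard part is the presence of several maximal elements, where the argument above degrades to $|\T(P)|\le m\cdot(n-1)!$ with $m$ the number of maximal elements; the examples make clear that this factor of $m$ is genuinely wasteful, so closing the gap requires either understanding how the injections $\T_z(P)\hookrightarrow\Lambda(P\setminus\{z\})$ overlap as $z$ ranges over the maximal elements of $P$, or extracting additional necessary conditions satisfied by every tangled labeling beyond the placement of the label $1$. I expect this to be the genuine obstruction to the full conjecture, and for the partial progress recorded in \Cref{(n-1)!} I would content myself with the unique-maximal-element case above, supplemented by a separate treatment of rooted forests that feeds the fiber-size formula of \Cref{fiber} into an induction handling each connected component and its leaves in turn.
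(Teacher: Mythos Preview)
Your central lemma --- ``if $L$ is tangled, then $L^{-1}(1)$ is a maximal element of $P$'' --- is false. Take the $3$-chain $a\lessdot b\lessdot c$ and the labeling $L(a)=3$, $L(b)=1$, $L(c)=2$: one checks $\partial(L)=(2,1,3)$ and $\partial^2(L)=(1,2,3)$, so $L$ is tangled, yet $L^{-1}(1)=b$ is not maximal. The correct structural constraint (this is Lemma~3.8 of \cite{DK20}, invoked in the paper) is the \emph{dual} statement: if $L$ is tangled then $L^{-1}(n)$ is a \emph{minimal} element. Your toggle heuristic does not prove what you claim because after the first toggle the element formerly labeled $1$ now carries label $2$ and is touched by the next toggle, and so on up the promotion chain; nothing forces that chain to start at a maximal element.

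With the lemma corrected, your injection argument collapses to the already-known easy case: it gives $|\T(P)|\le(n-1)!$ only when $P$ has a unique \emph{minimal} element (a $\hat 0$, not a $\hat 1$), which is exactly \Cref{cor:tangled unique minimal element} and is not new. For several minimal elements you are back to the useless bound $s\cdot(n-1)!$. The paper's proof of \Cref{(n-1)!} for inflated rooted forests does not use fiber sizes or anything like your outline; it takes the explicit closed form of \Cref{tangled enumeration}, shows by direct manipulation of that formula that adding a covering relation $m_{s-1}\lessdot m_s$ between two sibling minimal elements strictly \emph{increases} the number of tangled labelings, and then inducts on the number of minimal elements down to the unique-minimal case. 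The substance is the monotonicity step on the product formula, which your proposal does not anticipate.
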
 \noindent Finally, in \Cref{Open}, we present several open problems and further directions of inquiry.

\subsection{Extended Promotion}
Let $ P $ be an $ n $-element poset, and let $ L $ be a labeling of $ P $. For $ x\in P $ not maximal, the \emph{L-successor} of $ x $ is the element greater than $ x $ with minimal label. Now, let $ v_1=L\inverse(1) $. Let $ v_2 $ be the $ L $-successor of $ v_1 $; let $ v_3 $ the $ L $-successor of $ v_2 $, and so on until we get an element $ v_m $ that is maximal. The resulting chain $ v_1<_Pv_2<_P\cdots<_Pv_m $ is called the \emph{promotion chain} of $ L $. Now, define $ \partial(L) $ to be the labeling \[\partial(L)(x)=\begin{cases}
	L(x)-1 & \mathrm{if\ }x\not\in\{v_1,\ldots,v_m\};\\
	L(v_{i+1})-1 & \mathrm{if\ }x=v_i\ \mathrm{for\ }i\in\{1,\ldots,m-1\};\\
	n & \mathrm{if\ }x=v_m.
\end{cases}\] In other words, promotion may be thought of as decreasing each label by 1 (working modulo $ n $ so that $ 0=n $) and then cycling the promotion chain downwards one step. The following proposition captures a fundamental sorting property of promotion.

\begin{prop}[\cite{DK20}, Proposition 2.7]\label{sorting}
	If $ P $ is an $ n $-element poset, then $ \partial^{n-1}(\Lambda(P))=\call(P) $.
\end{prop}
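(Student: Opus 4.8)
The plan is to establish the two inclusions in $\partial^{n-1}(\Lambda(P)) = \call(P)$ separately. The inclusion $\call(P) \se \partial^{n-1}(\Lambda(P))$ is immediate: since $\partial$ restricts on $\call(P)$ to Sch\"utzenberger's promotion, which is a bijection of $\call(P)$ (as recalled in the introduction), $\partial^{n-1}$ maps $\call(P)$ bijectively onto itself, so $\call(P) = \partial^{n-1}(\call(P)) \se \partial^{n-1}(\Lambda(P))$. The real work is the reverse inclusion, i.e.\ showing that $\partial^{n-1}(L) \in \call(P)$ for every labeling $L$. To organize this, for a labeling $L$ and $k \in [n]$ I will call the index $k$ \emph{settled} for $L$ if $L\inverse(\{1,\ldots,k\})$ is an order ideal (downward-closed subset) of $P$. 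A routine argument shows that $L \in \call(P)$ if and only if every $k \in [n]$ is settled for $L$, and note that $k = n$ is settled for every labeling because $L\inverse([n]) = P$ is an order ideal.

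The heart of the proof is the following lemma: \textbf{if $k \in [n]$ is settled for $L$, then $k-1$ is settled for $\partial(L)$.} To prove it, put $I = L\inverse(\{1,\ldots,k\})$ and let $v_1 <_P \cdots <_P v_m$ be the promotion chain of $L$. Since $v_1 = L\inverse(1) \in I$, and since $I$ is downward closed (so the strictly increasing chain cannot re-enter $I$ once it has left), there is an index $t \ge 1$ with $v_1,\ldots,v_t \in I$ and $v_{t+1},\ldots,v_m \notin I$. Next, $v_t$ is a maximal element of the subposet $I$: if $t=m$ this holds because $v_m$ is maximal in $P$, while if $t<m$ then any $y \in I$ with $y >_P v_t$ would have $L(y) \le k$, forcing the $L$-successor $v_{t+1}$ of $v_t$ (which carries the least label among elements above $v_t$) to lie in $I$, contrary to $v_{t+1} \notin I$. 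Finally, unwinding the three cases in the definition of $\partial$ shows that each of $v_1,\ldots,v_{t-1}$ inherits a label $\le k-1$ under $\partial(L)$, that $v_t$ inherits a label $\ge k$ (namely $L(v_{t+1})-1$ if $t<m$, and $n$ if $t=m$), and that an off-chain element lands in $\{1,\ldots,k-1\}$ exactly when it lay in $I$; hence
\[
\big(\partial(L)\big)\inverse(\{1,\ldots,k-1\}) = I \setminus \{v_t\},
\]
which is an order ideal since $v_t$ is maximal in $I$. This proves the lemma.

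Given the lemma, the Proposition follows by a descent on promotion steps. Fix a labeling $L$ and suppose toward a contradiction that $\partial^{n-1}(L) \notin \call(P)$. Then some index $k_0 \in [n]$ is not settled for $\partial^{n-1}(L)$, and $k_0 \ne n$. Applying the contrapositive of the lemma to $\partial^{n-2}(L)$ shows that $k_0 + 1$ is not settled for $\partial^{n-2}(L)$; iterating, for each $r$ with $k_0 + r \le n$ the index $k_0 + r$ is not settled for $\partial^{n-1-r}(L)$. Taking $r = n - k_0$ gives that $n$ is not settled for $\partial^{k_0-1}(L)$, contradicting the observation that $n$ is settled for every labeling. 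Hence $\partial^{n-1}(L) \in \call(P)$, completing the reverse inclusion.

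The only genuinely delicate step is the identity $\big(\partial(L)\big)\inverse(\{1,\ldots,k-1\}) = I \setminus \{v_t\}$ inside the lemma: it requires tracking how the three parts of the definition of $\partial$ --- decrementing labels off the chain, shifting labels along the chain, and sending the top of the chain to $n$ --- interact with membership in the order ideal $I$, and in particular verifying that $v_t$ is the unique element of $I$ whose new label exceeds $k-1$. The characterization of linear extensions via settled indices and the final descent argument are both routine.
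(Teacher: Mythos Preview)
Your proof is correct and is essentially the same argument the paper invokes: the paper deduces \Cref{sorting} immediately from \Cref{frozen elements} (the $\gamma=n-1$ case), and your notion of a ``settled'' index $k$ is exactly the complementary formulation of the frozen condition---$k$ is settled for $L$ if and only if $L^{-1}(\{k+1,\ldots,n\})$ is an upper order ideal. Your key lemma (if $k$ is settled for $L$ then $k-1$ is settled for $\partial(L)$) is precisely the inductive step underlying \Cref{frozen elements}, so you have reproduced that lemma's content in dual language and then drawn the same conclusion.
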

\section{Preliminaries, Frozen Elements, and Some Special Classes of Posets}\label{Background}

\subsection{Preliminary Definitions}A \emph{lower order ideal} of a poset $ P $ is a subset $ Q\subset P $ such that for every $ x\in Q $ and $ y\in P $ with $ y<_Px $ we have $ y\in Q $. Similarly, an \emph{upper order ideal} of a poset $ P $ is a subset $ Q\subset P $ such that for all $ x\in Q $ and $ y\in P $ with $ y>_Px $ we have $ y\in Q $. It is often useful to note that $ Q $ is a lower order ideal of $ P $ if and only if $ P\setminus Q $ is an upper order ideal of $ P $. For $ x,y\in P $, we say that $ y $ \emph{covers} $ x $ and write $ x\lessdot y $ if $ x<_Py $ and $ \{z\in P\;|\;x<_Pz<_Py\}=\emptyset $. In this case, we say that $ y $ is a \emph{parent} of $ x $ and that $ x $ is a \emph{child} of $ y $. 

The \emph{Hasse diagram} of a poset $ P $ is a graphical illustration of its covering relations. Each element of $ P $ is represented by a vertex, and if $ x<_Py $, then the vertex corresponding to $ x $ is drawn below that corresponding to $ y $; there exists an edge between these vertices if and only if $ x\lessdot_P y $. We say a poset is \emph{connected} if its Hasse diagram is connected when regarded as a graph; the \emph{connected components} of $ P $ are the subposets induced by the connected components of the Hasse diagram of $ P $.

Suppose $ P $ is an $ n $-element poset, and let $ f:P\to\Z $ be an injective function. Then the \emph{standardization} of $ f $, denoted $ \st(f) $, is the labeling $ L:P\to[n] $ such that for all $ x,y\in P $, $ L(x)<L(y) $ if and only if $ f(x)<f(y) $. Note that this labeling is unique. Equivalently, if $ g:f(P)\to[n] $ is an order-preserving bijection, then $ \st(f)=g\circ f $.

\Cref{sorting} motivates the following definitions:
\begin{definition}
	Let $ L $ be a labeling of a poset $ P $. The \emph{sorting time} of $ P $ is the minimum number $ k\in\N $ such that $ \partial^k(L)\in\call(P) $.
\end{definition}
\begin{definition}
	For an $ n $-element poset $ P $, a labeling is called \emph{tangled} if it has sorting time $ n-1 $. A labeling is called \emph{quasi-tangled} if it has sorting time $ n-2 $.	We also say a labeling $ L $ is $ k $\emph{-promotion-sortable} (or just $ k $\emph{-sortable}) if $ \partial^k(L)\in\call(P) $. We call 1-sortable labelings \emph{sortable}. 
\end{definition}

We let the set of all tangled labelings of $ P $ be denoted by $ \T(P) $; we denote the set of all {sortable} labelings by $ \Sigma(P) $. If $ P $ is a poset, $ L $ a labeling of $ P $, and $ \gamma\in\N $, we also will frequently use the shorthand $ L_\gamma $ to denote $ \partial^\gamma(L) $. Note that $ L_0 =L $. 

\subsection{Frozen Elements and Some Useful Results About Promotion}
\begin{definition}
	Let $ L $ be a labeling of an $ n $-element poset $ P $. Define $ a $ to be the largest nonnegative integer less than or equal to $ n $ such that for each $ j\in\{n-a+1,\ldots,n\} $, the set $ \{x\in P\;|\;j\leq L(x)\leq n\} $ forms an upper order ideal in $ P $. An element $ x\in P $ is said to be \emph{frozen with respect to $ L $} if $ n-a+1\leq L(x)\leq n $. 
\end{definition} 
Note that $ L $ is a linear extension if and only if all elements of $ P $ are frozen. Also, we remark that it is possible for a labeling of an $ n $-element poset to have no frozen elements, namely, when $ L\inverse(n) $ is not maximal. 
\begin{example}
	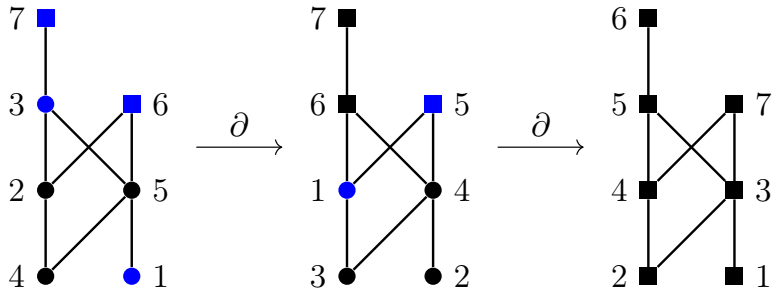
\begin{figure}[h!]
		\resizebox{0.65\textwidth}{!}{
			\begin{tikzpicture}[roundnode/.style={circle, inner sep=0pt, fill=black, minimum size=2mm}]
				
				\node[rectangle, inner sep=0pt, fill=blue, minimum size=2mm, fill=blue, label=left:{7}] (A7) at (0,3) {};
				\node[rectangle, inner sep=0pt, fill=blue, minimum size=2mm, label=right:{6}] (A6) at (1,2) {};
				\node[roundnode, label=right:{5}] (A5) at (1,1) {};
				\node[roundnode,fill=blue, label=right:{1}] (A3) at (1,0) {};
				\node[roundnode, fill=blue, label=left:{3}] (A4) at (0,2) {};
				\node[roundnode, label=left:{2}] (A2) at (0,1) {};
				\node[roundnode, label=left:{4}] (A1) at (0,0) {}; 
				
				\path (A1) edge [thick] (A2);
				\path (A2) edge [thick] (A4);
				\path (A4) edge [thick] (A7);
				\path (A3) edge [thick] (A5);
				\path (A5) edge [thick] (A6);
				\path (A4) edge [thick] (A5);
				\path (A6) edge [thick] (A2);
				\path (A1) edge [thick] (A5);
				
				\node[rectangle, inner sep=0pt, fill=black, minimum size=2mm, label=left:{7}] (B7) at (3.5,3) {};
				\node[rectangle, inner sep=0pt, fill=blue, minimum size=2mm, label=right:{5}] (B6) at (4.5,2) {};
				\node[roundnode, label=right:{4}] (B5) at (4.5,1) {};
				\node[roundnode, label=right:{2}] (B3) at (4.5,0) {};
				\node[rectangle, inner sep=0pt, fill=black, minimum size=2mm, label=left:{6}] (B4) at (3.5,2) {};
				\node[roundnode, fill=blue, label=left:{1}] (B2) at (3.5,1) {};
				\node[roundnode, label=left:{3}] (B1) at (3.5,0) {}; 
				
				\path (B1) edge [thick] (B2);
				\path (B2) edge [thick] (B4);
				\path (B4) edge [thick] (B7);
				\path (B3) edge [thick] (B5);
				\path (B5) edge [thick] (B6);
				\path (B4) edge [thick] (B5);
				\path (B6) edge [thick] (B2);
				\path (B1) edge [thick] (B5);
				
				\draw[->](1.75,1.5) -- (2.75,1.5) node[pos=0.5, above]{$ \partial $};
				
				\node[rectangle, inner sep=0pt, fill=black, minimum size=2mm, label=left:{6}] (C7) at (7,3) {};
				\node[rectangle, inner sep=0pt, fill=black, minimum size=2mm, label=right:{7}] (C6) at (8,2) {};
				\node[rectangle, inner sep=0pt, fill=black, minimum size=2mm, label=right:{3}] (C5) at (8,1) {};
				\node[rectangle, inner sep=0pt, fill=black, minimum size=2mm, label=right:{1}] (C3) at (8,0) {};
				\node[rectangle, inner sep=0pt, fill=black, minimum size=2mm, label=left:{5}] (C4) at (7,2) {};
				\node[rectangle, inner sep=0pt, fill=black, minimum size=2mm, label=left:{4}] (C2) at (7,1) {};
				\node[rectangle, inner sep=0pt, fill=black, minimum size=2mm, label=left:{2}] (C1) at (7,0) {}; 
				
				\path (C1) edge [thick] (C2);
				\path (C2) edge [thick] (C4);
				\path (C4) edge [thick] (C7);
				\path (C3) edge [thick] (C5);
				\path (C5) edge [thick] (C6);
				\path (C4) edge [thick] (C5);
				\path (C6) edge [thick] (C2);
				\path (C1) edge [thick] (C5);
				
				\draw[->](5.25,1.5) -- (6.25,1.5) node[pos=0.5, above]{$ \partial $};

		\end{tikzpicture}	}
		\caption{The above illustrates how extended promotion works. In the figure, the promotion chain is colored in blue, while the frozen elements are denoted using square-shaped nodes.}
	\end{figure}
\end{example}

Defant and Kravitz proved the following useful results about frozen elements.
\begin{lemma}[\cite{DK20}, Lemma 2.5]\label{frozen proper}
	Let $ L $ be a labeling of an $ n $-element poset $ P $. If $ F_0 $ denotes the set of frozen elements with respect to $ L $ and $ F_1 $ denotes the set of frozen elements with respect to $ L_1=\partial(L) $, then $ F_0 $ is properly contained in $ F_1 $ unless $ L\in\call(P) $.
\end{lemma}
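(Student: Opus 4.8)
The plan is to argue directly from the definition of $\partial$ and of frozen elements. We may assume $L\notin\call(P)$, and we set $a=|F_0|$, so that $F_0=\{x\in P:L(x)\ge n-a+1\}$; since $L$ is not a linear extension, $a\le n-1$. Let $v_1<_P\cdots<_P v_m$ be the promotion chain of $L$ (so $v_1=L^{-1}(1)$ and $v_m$ is maximal), write $L_1=\partial(L)$, and for $1\le j\le n$ put $U_j=\{x\in P:L(x)\ge j\}$ and $U_j'=\{x\in P:L_1(x)\ge j\}$. Note $U_j$ is an upper order ideal whenever $j\ge n-a+1$, by the definition of $a$.

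First I would check that $F_0\subseteq U_{n-a}'$, i.e.\ every frozen element $x$ of $L$ satisfies $L_1(x)\ge n-a$. This is a short case analysis using the defining formula for $\partial(L)$: if $x$ is off the promotion chain then $L_1(x)=L(x)-1\ge n-a$; if $x=v_m$ then $L_1(x)=n$; and if $x=v_i$ with $i<m$ then $L_1(x)=L(v_{i+1})-1$, where $v_{i+1}>_P v_i=x\in F_0$ forces $v_{i+1}\in F_0$ (because $F_0$ is an upper order ideal), so again $L_1(x)\ge n-a$.

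The core of the argument is to show that $U_j'$ is an upper order ideal for every $j\in\{n-a,n-a+1,\ldots,n\}$; granting this, the definition of frozen elements gives $|F_1|\ge a+1$ and $F_1\supseteq U_{n-a}'$, which together with the previous paragraph yields $F_0\subseteq U_{n-a}'\subseteq F_1$, the inclusion being proper since $|F_0|=a<a+1\le|F_1|$. The case $j=n$ is trivial, as $U_n'=\{v_m\}$ and $v_m$ is maximal. For $n-a\le j\le n-1$ the idea is that $U_j'$ is obtained from the upper order ideal $U_{j+1}$ (here $j+1\ge n-a+1$) by sliding the promotion chain down one step: off the chain the two sets agree (since $L_1(x)=L(x)-1$ there), and on the chain, because $U_{j+1}$ is an upper order ideal, the chain is a chain, and $v_1=L^{-1}(1)\notin U_{j+1}$ (as $j\ge n-a\ge1$), the chain-elements of $U_{j+1}$ form a terminal segment $\{v_{k_0},\ldots,v_m\}$ (possibly empty) while those of $U_j'$ form $\{v_{k_0-1},\ldots,v_m\}$. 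Hence $U_j'$ is $U_{j+1}$ with at most one extra element adjoined, either $v_{k_0-1}$ or (when the segment is empty) $v_m$.

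The one point requiring care — and the place I expect the proof to hinge — is verifying that adjoining this extra element preserves upward closure. When the extra element is $v_m$ this is immediate. When it is $v_{k_0-1}$, suppose $y>_P v_{k_0-1}$; since $v_{k_0}$ is by construction the $L$-successor of $v_{k_0-1}$ (the element above $v_{k_0-1}$ with smallest $L$-label) and $v_{k_0}\in U_{j+1}$, we get $L(y)\ge L(v_{k_0})\ge j+1$, so $y\in U_{j+1}\subseteq U_j'$. This structural feature of promotion chains is exactly what forces $F_0\subsetneq F_1$. All degenerate configurations (a trivial promotion chain $m=1$, the case $a=0$, an empty terminal segment) are absorbed into the cases above, which completes the argument.
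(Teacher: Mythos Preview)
The present paper does not actually prove this lemma; it is quoted verbatim from \cite{DK20} (as Lemma~2.5 there) and no argument is reproduced. So there is nothing in this paper to compare your proof against.

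That said, your argument is correct and self-contained. The decisive step---showing that when the extra chain element $v_{k_0-1}$ is adjoined to the upper order ideal $U_{j+1}$ the result is still upward closed---is handled exactly right: since $v_{k_0}$ is by definition the $L$-successor of $v_{k_0-1}$, every $y>_P v_{k_0-1}$ has $L(y)\ge L(v_{k_0})\ge j+1$, so $y\in U_{j+1}\subseteq U_j'$. The edge cases ($a=0$, $m=1$, empty terminal segment) are all absorbed correctly. One incidental remark: your bound $a\le n-1$ for $L\notin\call(P)$ could be sharpened to $a\le n-2$, since $U_1=P$ is always an upper order ideal and hence $a=n-1$ would force $a=n$; but you only ever use $n-a\ge 1$, so nothing is affected.
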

\begin{lemma}[\cite{DK20}, Lemma 2.6]\label{frozen elements}
	With notation as above, for every $ 0\leq\gamma\leq n $, the elements $ L_\gamma\inverse(n-\gamma+1),\ldots,L_\gamma\inverse(n) $ are frozen with respect to $ L_\gamma $. 
\end{lemma}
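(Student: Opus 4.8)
The plan is to reformulate the claim in terms of the sizes of the frozen sets and then run a short induction on $\gamma$ driven by \Cref{frozen proper}. For $0\le\gamma\le n$, let $F_\gamma$ denote the set of elements of $P$ frozen with respect to $L_\gamma$, and put $a_\gamma=|F_\gamma|$. Unwinding the definition of ``frozen,'' the elements frozen with respect to $L_\gamma$ are exactly $L_\gamma\inverse(n-a_\gamma+1),\dots,L_\gamma\inverse(n)$---that is, the $a_\gamma$ elements carrying the largest labels under $L_\gamma$. Hence the assertion to be proved is equivalent to the inequality $a_\gamma\ge\gamma$ for all $0\le\gamma\le n$, and this is the statement I would actually prove.

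I would establish $a_\gamma\ge\gamma$ by induction on $\gamma$. The base case $\gamma=0$ is immediate, since $a_0=|F_0|\ge0$ (and the list of labels in the statement is empty). For the inductive step, assume $a_\gamma\ge\gamma$ with $\gamma+1\le n$, and distinguish two cases according to whether $L_\gamma$ is a linear extension. If $L_\gamma\in\call(P)$, then, because $\partial$ restricts to Sch\"utzenberger's promotion on $\call(P)$ and in particular maps $\call(P)$ into itself, $L_{\gamma+1}\in\call(P)$ too; thus $F_{\gamma+1}=P$ and $a_{\gamma+1}=n\ge\gamma+1$. If $L_\gamma\notin\call(P)$, then \Cref{frozen proper} yields $F_\gamma\subsetneq F_{\gamma+1}$, so $a_{\gamma+1}\ge a_\gamma+1\ge\gamma+1$. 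This closes the induction.

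The real content is hidden in \Cref{frozen proper}: the mechanism making the lemma true is that promotion freezes at least one new element at every step until a linear extension is attained. So I do not expect a genuine obstacle here; the only points requiring care are the two essentially formal steps---translating ``frozen'' into ``carrying one of the top $a_\gamma$ labels,'' and handling the boundary case $L_\gamma\in\call(P)$ so that the induction does not stall once every element is already frozen.
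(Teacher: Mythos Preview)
Your proof is correct. Note, however, that the paper does not actually prove this lemma: it is quoted from \cite{DK20} (as Lemma~2.6 there) and stated without proof, so there is no ``paper's own proof'' to compare against. Your induction on $\gamma$ using \Cref{frozen proper} is the natural argument and matches how the result is derived in the original source.
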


Note that the $ \gamma=n-1 $ case of \Cref{frozen elements} immediately implies \Cref{sorting}.

\begin{lemma}\label{Swapnil lemma}
	Let $ P $ be an $ n $-element poset. For $ \gamma\in\N $ and any $ x\in\{2,\ldots,n\} $, we have that $ L_\gamma\inverse(x)\geq_PL_{\gamma+1}\inverse(x-1) $; equality holds if and only if $ L_\gamma\inverse(x) $ is not in the promotion chain of $ L_\gamma $. 
\end{lemma}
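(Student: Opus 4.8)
The plan is to reduce immediately to the case $\gamma = 0$: since $L_{\gamma+1} = \partial(L_\gamma)$ and $L_\gamma$ is itself a labeling of $P$, it suffices to show that for an arbitrary labeling $L$ of $P$ and any $x \in \{2,\ldots,n\}$, one has $L^{-1}(x) \geq_P \partial(L)^{-1}(x-1)$, with equality exactly when $L^{-1}(x)$ does not lie on the promotion chain of $L$. I would then unwind the piecewise definition of $\partial$ by a short case analysis, writing $v_1 <_P \cdots <_P v_m$ for the promotion chain of $L$ and setting $w := L^{-1}(x)$ and $y := \partial(L)^{-1}(x-1)$; note that $x-1 \in \{1,\ldots,n-1\}$, so $y$ is well defined and the value $\partial(L)(y) = x-1$ is not $n$.

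First I would dispense with the case $w \notin \{v_1,\ldots,v_m\}$: the first branch of the definition of $\partial$ gives $\partial(L)(w) = L(w) - 1 = x-1$, so $y = w$ and equality holds. For the remaining case, suppose $w = v_i$. Since $L(v_1) = 1 < 2 \le x$, we must have $i \geq 2$. To pin down $y$ I would ask which branch of $\partial$ can output the value $x-1$: not the third (it outputs $n$), and not the first applied to some $z \notin \{v_1,\ldots,v_m\}$ (that would force $L(z) = x$, hence $z = w = v_i$, a contradiction); so $y = v_j$ with $j \le m-1$, and the second branch gives $L(v_{j+1}) - 1 = x-1 = L(v_i) - 1$, whence $v_{j+1} = v_i$ and $j = i-1$. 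Therefore $y = v_{i-1} <_P v_i = w$, a strict inequality. Combining the two cases yields both the asserted inequality and the equality criterion.

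The argument is essentially bookkeeping with the definition of promotion, so I do not expect a genuine obstacle; the only points needing a little care are (i) ruling out the spurious possibility that, in the second case, $y$ lies off the promotion chain---which, if allowed, would wrongly produce an equality---and (ii) checking the boundary indices ($i \geq 2$, and $j = i-1 \le m-1$) so that the second branch of the definition of $\partial$ genuinely applies to $v_j$.
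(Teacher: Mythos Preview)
Your argument is correct and follows essentially the same route as the paper's proof: both split on whether $L_\gamma^{-1}(x)$ lies on the promotion chain, handle the off-chain case by the first branch of the definition of $\partial$, and in the on-chain case identify $\partial(L_\gamma)^{-1}(x-1)$ as the predecessor $v_{i-1}$ of $v_i=L_\gamma^{-1}(x)$ along the chain. Your write-up is a bit more explicit in ruling out the spurious possibilities for $y$, but the underlying idea is identical.
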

\begin{proof}
	Suppose first that $ L_\gamma\inverse(x) $ is not in the $ \gamma $th promotion chain. Then $ L_{\gamma+1}\inverse(x-1)=L_{\gamma}\inverse(x) $, and we are done. Otherwise, since $ x>1 $, there exists an element $ a<_PL_{\gamma}\inverse(x) $ such that $ a $ is in the promotion chain and $ L_{\gamma}\inverse(x) $ is the $ L_{\gamma} $-successor of $ a $. Hence, $ a=L_{\gamma+1}\inverse(x-1)<_PL_{\gamma}\inverse(x) $, as desired. 
\end{proof}

The following results will also be helpful later:
\begin{lemma}\label{inversion}
	Let $ x $ and $ y $ be two elements of $ P $ with $ y<_P x $. Fix some $ \gamma\in\{1,\ldots,n-2\} $, and suppose that $ L_\gamma(y)>L_\gamma(x) $. Then $ L_{\gamma}(y)=L_{\gamma-1}(y)-1 $. Moreover, we have that $ L_{\gamma-1}(y)>L_{\gamma-1}(x) $. 
\end{lemma}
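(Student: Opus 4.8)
The plan is to unwind one application of $\partial$. Set $M:=L_{\gamma-1}$, so that $L_\gamma=\partial(M)$ (this is where $\gamma\geq 1$ is used), and let $v_1<_Pv_2<_P\cdots<_Pv_m$ be the promotion chain of $M$; recall $v_1=M\inverse(1)$, each $v_{i+1}$ is the $M$-successor of $v_i$, and $v_m$ is maximal. Two features of the chain will be used throughout. First, since $v_{i+1}$ carries the smallest $M$-label among all elements strictly above $v_i$, we have $M(v_{i+1})\leq M(z)$ for every $z>_Pv_i$. Second, labels increase weakly along the chain: $M(v_1)=1$, and for $2\leq i\leq m-1$ the element $v_{i+1}$ lies above $v_{i-1}$, so $M(v_i)\leq M(v_{i+1})$ by the minimality just noted; hence $M(v_j)\leq M(v_{j+1})$ for all $1\leq j\leq m-1$.

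First I would show that $y$ is not on the promotion chain of $M$. This already gives the first assertion: if $y\notin\{v_1,\ldots,v_m\}$ then $\partial(M)(y)=M(y)-1$ by definition, and there is no wraparound modulo $n$ since $y\neq v_1$ forces $M(y)\geq 2$; that is exactly $L_\gamma(y)=L_{\gamma-1}(y)-1$. So suppose toward a contradiction that $y=v_i$. As $y<_Px$, the element $y$ is not maximal, so $i<m$ and $\partial(M)(y)=M(v_{i+1})-1$. Now split on $x$. If $x$ is off the chain, then $\partial(M)(x)=M(x)-1\geq M(v_{i+1})-1$ because $x>_Py$. If $x=v_m$, then $\partial(M)(x)=n\geq\partial(M)(y)$ trivially. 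If $x=v_j$ with $j<m$, then, since $v_1,\ldots,v_m$ is a chain and $y=v_i<_Px=v_j$, we have $i<j$, so $v_{j+1}>_Pv_j=x>_Py$ and hence $\partial(M)(x)=M(v_{j+1})-1\geq M(v_{i+1})-1=\partial(M)(y)$. In every case $\partial(M)(x)\geq\partial(M)(y)$, i.e.\ $L_\gamma(x)\geq L_\gamma(y)$, contradicting the hypothesis. Hence $y$ is off the chain.

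For the ``moreover'' part I now know $\partial(M)(y)=M(y)-1$ and want $M(y)>M(x)$, i.e.\ $L_{\gamma-1}(y)>L_{\gamma-1}(x)$. If $x$ is off the promotion chain of $M$, then $\partial(M)(x)=M(x)-1$, so $L_\gamma(y)>L_\gamma(x)$ reads $M(y)-1>M(x)-1$ and we are done. Otherwise $x=v_j$; we cannot have $j=m$, since then $\partial(M)(x)=n\geq\partial(M)(y)$ would contradict the hypothesis, so $j\leq m-1$ and $\partial(M)(x)=M(v_{j+1})-1$. The hypothesis gives $M(y)-1=\partial(M)(y)>\partial(M)(x)=M(v_{j+1})-1$, so $M(y)>M(v_{j+1})$; combined with $M(v_{j+1})\geq M(v_j)=M(x)$ from the monotonicity observation, this yields $M(y)>M(x)$.

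The only real work is the case bookkeeping in the middle paragraph, and it all runs on the single fact that the $M$-successor of $v_i$ has the minimal $M$-label among elements above $v_i$ (hence the weak monotonicity of labels along the chain); I do not anticipate a genuine obstacle. I would also remark that the hypothesis $\gamma\leq n-2$ is not actually needed beyond $\gamma\geq 1$: for $\gamma=n-1$ the assumption $L_\gamma(y)>L_\gamma(x)$ is vacuous, since $L_{n-1}$ is a linear extension by \Cref{sorting}.
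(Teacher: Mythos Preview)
Your proof is correct. Both your argument and the paper's hinge on the same key step---showing that $y$ cannot lie on the promotion chain of $L_{\gamma-1}$---but the executions differ. The paper dispatches this in one stroke via \Cref{Swapnil lemma}: with $a=L_\gamma(y)$ and $b=L_\gamma(x)$, that lemma yields $L_{\gamma-1}\inverse(b+1)\geq_P x>_P y$, so there is an element above $y$ with $L_{\gamma-1}$-label $b+1<a+1$, which immediately prevents $L_{\gamma-1}\inverse(a+1)$ from being the $L_{\gamma-1}$-successor of $y$; the ``moreover'' then drops out as $L_{\gamma-1}(x)\leq b+1<a+1=L_{\gamma-1}(y)$. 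You instead work directly from the definition of $\partial$, first establishing weak monotonicity of labels along the promotion chain and then doing a case split on whether $x$ is on the chain. Your route is more self-contained (it does not appeal to the earlier lemma) at the cost of extra bookkeeping; the paper's is shorter because \Cref{Swapnil lemma} already packages the relevant observation. Your remark that the upper bound $\gamma\leq n-2$ is not genuinely needed is also correct.
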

\begin{proof}	
	Begin by letting $ a=L_\gamma(y) $ and $ b=L_{\gamma}(x) $, and note that $ a>b $. Observe that the first part of the lemma holds if and only if $ y $ is not in the $ (\gamma-1) $st promotion chain. Assume for a contradiction that $ y $ \emph{is} in the promotion chain of $ L_{\gamma-1} $. By \Cref{Swapnil lemma}, $ L_{\gamma-1}\inverse(b+1)\geq_P L_{\gamma}\inverse(b)=x>_Py $, so $ L_{\gamma-1}\inverse(a+1) $ cannot be the $ L_{\gamma-1} $-successor of $ y $, since $ a>b $. This is a contradiction. The second part of the lemma is simple: $ L_{\gamma-1}(x)\leq b+1<a+1=L_{\gamma-1}(y) $. 
\end{proof}

\begin{thm}[\cite{DK20}, Theorem 2.10]\label{k-untangled}
	For $ 0\leq k\leq n-2 $, an $ n $-element poset $ P $ has a labeling with sorting time $ n-k-1 $ if and only if it  has a lower order ideal of size $ k+2 $ that is not an antichain.
\end{thm}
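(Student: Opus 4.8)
The plan is to prove the two implications separately; the implication ``a labeling of sorting time $n-k-1$ exists $\Rightarrow$ a suitable lower order ideal exists'' is short, while its converse, asserting the existence of a labeling with a prescribed sorting time, carries essentially all of the difficulty.

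Suppose first that $L$ has sorting time $n-k-1$, so that $M:=\partial^{n-k-2}(L)$ is not a linear extension. Applying \Cref{frozen elements} with $\gamma=n-k-2$ shows that the $n-k-2$ elements $M\inverse(k+3),\dots,M\inverse(n)$ are frozen with respect to $M$, so $M$ has at least $n-k-2$ frozen elements, and the set $A$ of elements \emph{not} frozen with respect to $M$ satisfies $|A|\le k+2$; moreover $A$ is a lower order ideal, being the complement of the upper order ideal of frozen elements. The sublemma I would isolate is that \emph{the set of non-frozen elements of any labeling that is not a linear extension fails to be an antichain}; this follows from a short case analysis. Writing $F$ for the frozen elements, one verifies $M(x)<M(y)$ whenever $x<_Py$: $F$ is an upper order ideal on which $M$ is order-preserving (if $x\in F$ and $M(x)>M(y)$, then the upper order ideal $\{z:M(z)\ge M(x)\}$ contains $y$, a contradiction), every label occurring on $A$ is smaller than every label occurring on $F$, and an antichain cannot contain a comparable pair. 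Since $M\notin\call(P)$, the sublemma forces $A$ to be a non-antichain (in particular $|A|\ge 2$); and since $|A|\le k+2\le n$, one enlarges $A$ to a lower order ideal $A'$ with $|A'|=k+2$ by repeatedly adjoining a minimal element of the current complement. Then $A'\supseteq A$ is not an antichain, as required.

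Conversely, suppose $P$ has a non-antichain lower order ideal $Q$ with $|Q|=k+2$; we must build a labeling of sorting time $n-k-1$. I would first reduce to producing a labeling of sorting time \emph{at least} $n-k-1$: if $L$ has sorting time $t\ge n-k-1$, then $\partial^{t-(n-k-1)}(L)$ has sorting time exactly $n-k-1$. To produce such an $L$, choose a covering relation $a\lessdot_Pb$ with $a,b\in Q$ and $a$ minimal in $P$ (such a pair exists because $Q$ is not an antichain: take $u<_Pv$ in $Q$, a minimal element $a\le_Pu$, and a cover $b$ of $a$ with $b\le_Pv$). Take a linear extension $\Lambda$ of $P$ with $\Lambda(a)=k+1$ --- such $\Lambda$ exists precisely when at most $n-k$ elements lie weakly above $a$ --- and define $L$ by $L(a)=n$, $L(x)=\Lambda(x)$ if $\Lambda(x)\le k$, and $L(x)=\Lambda(x)-1$ if $\Lambda(x)\ge k+2$. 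Because $a$ is minimal and $L(a)=n$, the element $a$ never lies on a promotion chain during the first $n-2$ applications of $\partial$, so $L_\gamma(a)=n-\gamma$ for $0\le\gamma\le n-1$. One then tracks the successive promotion chains of $L_0,L_1,\dots,L_{n-k-2}$ to show that through step $n-k-2$ the element $a$ is still out of order with some element lying above it --- its label attains the correct value $k+1$ only at step $n-k-1$ --- so that $L_{n-k-2}\notin\call(P)$ and the sorting time of $L$ is at least $n-k-1$.

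The main obstacle is exactly this construction: exhibiting a concrete labeling and verifying, by an inductive analysis of the promotion chains, that none of $L_0,\dots,L_{n-k-2}$ is a linear extension. The choice of $L$ above breaks down when no suitable minimal element of $Q$ has few enough elements above it, and dealing with those posets --- where one exploits the structure of $Q$ more directly, or instead constructs the extremal sorting time associated with a smallest non-antichain lower order ideal and then applies $\partial$ repeatedly --- is the delicate part of the argument, and where the hypothesis $|Q|=k+2$ (as opposed to a smaller ideal) is really used.
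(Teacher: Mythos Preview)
The present paper does not prove this theorem; it is quoted from \cite{DK20}, so there is no in-paper argument to compare against. I will simply assess your proposal on its merits.

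Your forward implication is correct. Letting $M=L_{n-k-2}$, the set $A$ of non-frozen elements of $M$ is indeed a lower order ideal of size at most $k+2$, and your sublemma---that $M$ restricted to the frozen set is order-preserving, that every frozen label exceeds every non-frozen label, and hence that $A$ being an antichain would force $M\in\call(P)$---is valid. Enlarging $A$ to size exactly $k+2$ by adjoining minimal elements of the complement preserves both the lower-order-ideal property and the comparable pair. (For comparison, the ideal extracted in \Cref{lower order ideal of size 3} is $M^{-1}(\{1,\dots,k+2\})$ rather than the non-frozen set; the two coincide when exactly $n-k-2$ elements are frozen, and your version is a legitimate variant.)

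The reverse implication, however, is not proved. Your reduction from ``sorting time exactly $n-k-1$'' to ``sorting time at least $n-k-1$'' is fine, but the construction you propose---a linear extension $\Lambda$ with $\Lambda(a)=k+1$, modified so that $L(a)=n$---requires that at most $n-k$ elements of $P$ lie weakly above the chosen minimal element $a$, and this can fail outright: if $P$ is an $n$-chain and $k\ge1$ then every element lies above the minimum and no such $\Lambda$ exists. You flag this yourself in the final paragraph, but the phrases ``one then tracks the successive promotion chains'' and ``dealing with those posets \ldots\ is the delicate part of the argument'' describe work to be done rather than supply it. Even when your $\Lambda$ does exist, you have not verified that $L_{n-k-2}\notin\call(P)$: after $n-k-2$ promotions the element $a$ carries label $k+2$, but nothing you have said forces any element above $a$ to carry one of the labels $1,\dots,k+1$. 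For instance, take $P=\{a<b\}\sqcup\{c_1<c_2<c_3\}$ with $k=1$ and $L(a)=5$, $L(b)=1$, $L(c_i)=i+1$; one checks that already $L_1$ is a linear extension, so the sorting time is $1$, not $\ge n-k-1=3$. As written, the converse is a plan with an explicit and unfilled gap at exactly the point where the content lies.
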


\subsection{Rooted Tree and Forest Posets}
\begin{definition}
	A \emph{rooted forest poset} is a poset in which each element is covered by at most one other element. A \emph{rooted tree poset} is a connected rooted forest poset. Given a rooted tree poset, we say a subset $ Q $ of $ P $ is a subchain (respectively, subtree) if the poset induced by $ Q $ is a chain (respectively, tree). A \emph{rooted star poset} is a rooted tree poset where the root covers every leaf.
\end{definition} Note here that a rooted tree poset is a poset whose Hasse diagram is a rooted tree where the root is the unique maximal element. A rooted forest poset is a poset whose connected components are rooted tree posets. 

\section{Promotion Fibers}\label{Promotion Fibers}
Given a sorting procedure, it is a fundamental problem to enumerate the objects requiring only one iteration to become sorted. For promotion, Defant and Kravitz enumerated the sortable labelings of a poset in \cite{DK20}. It is then natural to turn our attention to the labelings that can be sorted in two applications of promotion. Note that this problem can be reduced to enumerating the preimages (under $ \partial $) of the sortable labelings.
\begin{definition}
	Let $ L $ be a labeling of a poset $ P $. We say an element $ x\in P $ is \emph{$ L $-golden} if for all $ y>_Px $, we have $ L(y)>L(x) $. A chain or tree is called $ L $-golden if all of its elements are $ L $-golden. If $ P $ is a rooted tree poset, an $ L $-golden subchain or subtree $ T $ is called \emph{maximal} if adding any other elements to $ T $ makes it no longer an $ L $-golden subchain or subtree.
\end{definition}
Given a rooted tree poset $ P $, its \emph{highest branch vertex} is the largest element $ x $ that covers more than one element. Let $ P $ be a rooted tree poset with root $ r $ and highest branch vertex $ b $. Note that the set $ \{x\;|\;b\leq_P x\leq_Pr\} $ forms a chain. Thus, given a labeling $ L $ of $ P $, there is a unique maximal $ L $-golden chain of elements greater than or equal to the highest branch vertex.
\begin{prop}[\cite{DK20}, Proposition 4.1]\label{L-golden}
	Let $ P $ be an $ n $-element poset. A labeling $ L $ of $ P $ is in the image of $ \partial $ if and only if $ L\inverse(n) $ is a maximal element of $ P $. If $ L\in\im(\partial) $, then $ |\partial\inverse(L)| $ is equal to the number of $ L $-golden chains of $ P $ containing $ L\inverse(n) $. 
\end{prop}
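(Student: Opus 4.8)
The plan is to invert the promotion map explicitly. For the image characterization, note that $\partial(M)(x)=n$ exactly when $x$ is the top $v_m$ of the promotion chain of $M$, which is maximal by definition; hence if $L=\partial(M)$ then $L^{-1}(n)$ is maximal. Conversely, if $w:=L^{-1}(n)$ is maximal I will exhibit a preimage below, so $L\in\im(\partial)$.

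For the enumeration, fix $L\in\im(\partial)$ and $w=L^{-1}(n)$. To each chain $c\colon v_1<_P\cdots<_P v_m=w$ ending at $w$ (not necessarily saturated, since $L$-successors are taken with respect to $>_P$, not $\lessdot$), associate the function $M_c\colon P\to[n]$ defined by $M_c(v_1)=1$, $M_c(v_i)=L(v_{i-1})+1$ for $2\le i\le m$, and $M_c(x)=L(x)+1$ for $x\notin\{v_1,\dots,v_m\}$. A routine count using $L(w)=n$ shows $M_c$ is a bijection, hence a labeling. The crux of the argument is the claim that
\[
\big(\text{promotion chain of }M_c\text{ equals }c\big)\iff\big(c\text{ is }L\text{-golden}\big)\iff\big(\partial(M_c)=L\big).
\]

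To prove the first equivalence I will use that $v_1=M_c^{-1}(1)$ and that $v_m=w$ is maximal, so the promotion chain of $M_c$ is $c$ precisely when, for each $i<m$, the element $v_{i+1}$ is the $M_c$-successor of $v_i$, i.e. $M_c(v_{i+1})<M_c(y)$ for every $y>_P v_i$ with $y\ne v_{i+1}$. Since $M_c(v_{i+1})=L(v_i)+1$, since $M_c(y)=L(y)+1$ for $y\notin c$, and since $M_c(v_j)=L(v_{j-1})+1$ with $v_{j-1}>_P v_i$ for any chain element $v_j$ with $j>i+1$, this condition unwinds to: $L(v_i)<L(y)$ for all $y>_P v_i$ other than possibly $y=w$ — and the case $y=w$ is automatic because $L(w)=n$. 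Thus it says exactly that $v_i$ is $L$-golden; ranging over $i<m$ (with $v_m=w$ vacuously $L$-golden) gives the equivalence. For the second equivalence: when the promotion chain of $M_c$ equals $c$, substituting $c$ into the promotion formula recovers $L$ termwise; conversely, for any labeling $M$ one has $M=M_{c'}$, where $c'$ is the promotion chain of $M$ and $M_{c'}$ is built from $\partial(M)$ exactly as above, so $\partial(M_c)=L$ forces $M_c=M_{c'}$ with $c'$ its own promotion chain, and since $c\mapsto M_c$ is injective — one recovers $c$ from $M_c$ via $v_1=M_c^{-1}(1)$ and $v_{i+1}=M_c^{-1}(L(v_i)+1)$ — we conclude $c'=c$.

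Granting the claim, the maps $M\mapsto(\text{promotion chain of }M)$ and $c\mapsto M_c$ are mutually inverse bijections between $\partial^{-1}(L)$ and the set of chains ending at $w$ that are $L$-golden, and since $w$ is maximal these are exactly the $L$-golden chains of $P$ containing $L^{-1}(n)$; this also furnishes the preimage needed for the $\Leftarrow$ direction of the first statement, as the one-element chain $(w)$ is vacuously $L$-golden. I expect the main obstacle to be the case analysis in the successor computation: one must keep straight the two different label shifts (the plain ``$+1$'' off the chain versus ``$L(v_{i-1})+1$'' on the chain) and check that the sole dangerous comparison — against $w$ itself — is harmless precisely because $L(w)=n$. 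The surrounding facts (bijectivity of $M_c$, injectivity of $c\mapsto M_c$) are bookkeeping.
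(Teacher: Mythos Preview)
Your argument is correct. The paper does not give its own proof of this proposition --- it is quoted verbatim from \cite{DK20} as Proposition~4.1 there --- so there is nothing in the present paper to compare against. That said, your explicit inverse construction (associate to each chain $c$ ending at $w=L^{-1}(n)$ the labeling $M_c$ obtained by shifting labels up by one off the chain and cycling along the chain) is exactly the natural way to invert promotion, and the verifications you outline all go through. In particular, your successor computation is right: the condition that $v_{i+1}$ be the $M_c$-successor of $v_i$ unwinds, after the two substitutions $M_c(y)=L(y)+1$ (for $y\notin c$) and $M_c(v_j)=L(v_{j-1})+1$ (for $j>i+1$), to the inequalities $L(v_i)<L(y)$ for every $y>_P v_i$ except possibly $y=w$, and that last case is free since $L(w)=n$. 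The only step worth spelling out a bit more when you write this up is the $i=m-1$ boundary case (there is no $v_{i+2}$ to feed back the inequality $L(v_i)<L(v_{i+1})$, but again $v_{i+1}=w$ makes it automatic); once that is noted, the equivalence at each $v_i$ is clean, and the bijection between $\partial^{-1}(L)$ and $L$-golden chains through $w$ follows as you describe.
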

Let $ T $ be a rooted tree poset with root $ r $. Given $ x\in T $, let $ x=p_1\lessdot p_2\lessdot\cdots\lessdot p_{\omega(x)}=r $ be the path from $ x $ to the root. In our notation, $ \omega(x) $ gives the number of elements in this path.
\begin{thm}\label{fiber}
	Let $ P $ be a rooted tree poset with $ n $ elements, and fix a labeling $ L $ of $ P $. Let $ c_1<_P\cdots<_Pc_k $ be the maximal $ L $-golden chain of elements greater than or equal to the highest branch vertex. Define $ \mathcal{M} $ to be the set of all maximal $ L $-golden rooted subtrees of $ P $ whose roots are less than the maximal branch vertex. 
	If $ L $ is a labeling in $ \im(\partial) $ (i.e., if $ L\inverse(n) $ is maximal), then \[|\partial\inverse(L)|=2^{k-1}+\sum_{T\in\mathcal{M}}\sum_{x\in T}2^{\omega(x)+k-2},\] where $ \omega(x) $ is computed with respect to $ T $, not $ P $. 
\end{thm}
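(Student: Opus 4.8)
The plan is to apply \Cref{L-golden}. Since $P$ is a rooted tree poset, its unique maximal element is the root $r$, so the hypothesis that $L^{-1}(n)$ is maximal means $L(r)=n$, and \Cref{L-golden} reduces the problem to counting the $L$-golden chains of $P$ that contain $r$. Let $b$ be the highest branch vertex. The first thing I would establish is that every element of $P$ is comparable to $b$: if some $w$ were incomparable to $b$, the paths from $w$ and from $b$ up to $r$ would first meet at a vertex covering two distinct elements, and that vertex would be a branch vertex strictly above $b$, contradicting the choice of $b$. Hence $P$ is the disjoint union of the chain $I_b:=\{x\in P: b\le_P x\le_P r\}$ (the principal up-set of $b$) and the set $D:=\{x\in P: x<_P b\}$, and for each $x\in D$ the unique path from $x$ to $r$ passes through $b$. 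Note also that $\{c_1,\dots,c_k\}$ is exactly the set of $L$-golden elements of $I_b$ and that $c_k=r$.

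Next I would classify each $L$-golden chain $C\ni r$ according to $\min C$ (which, being comparable to $b$, is either $\ge_P b$ or lies in $D$). If $\min C\ge_P b$, then $C\subseteq I_b$ and every element of $C$ is $L$-golden, so $C$ is a subset of $\{c_1,\dots,c_k\}$ containing $c_k=r$; conversely every such subset is an $L$-golden chain through $r$. This gives exactly $2^{k-1}$ chains, the first term of the formula. If instead $x_0:=\min C$ lies in $D$, then $x_0$ is $L$-golden, and since $C\subseteq\{x: x_0\le_P x\le_P r\}$ is a chain with minimum $x_0$ and maximum $r$, it is determined by choosing an arbitrary subset of the $L$-golden elements strictly between $x_0$ and $r$. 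By the splitting of $P$, these intermediate $L$-golden elements consist of the $h(x_0)$ of them strictly below $b$ (writing $h(x)$ for the number of $L$-golden $z$ with $x<_P z<_P b$), together with the $L$-golden $z$ satisfying $b\le_P z<_P r$; the latter are exactly $c_1,\dots,c_{k-1}$, so there are $k-1$ of them whether or not $b$ itself is $L$-golden. Hence the number of $L$-golden chains through $r$ with minimum $x_0$ is $2^{h(x_0)+k-1}$, and summing over all $L$-golden $x_0\in D$ gives the total for this case.

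Finally I would identify $\mathcal{M}$ with the set of connected components of the subposet induced on the $L$-golden elements of $D$: each such component is a rooted tree poset with root below $b$, these components partition the $L$-golden elements of $D$, and uniqueness of the component through a given $L$-golden $x_0\in D$ follows from the standard union argument (the union of two $L$-golden rooted subtrees below $b$ sharing a vertex is again one, by comparability of their roots, so a maximal one is unique). For $x$ in a component $T\in\mathcal{M}$, the path in $T$ from $x$ to the root of $T$ is precisely the set of $L$-golden $z$ with $x\le_P z<_P b$, so $\omega(x)=h(x)+1$. Substituting $2^{h(x_0)+k-1}=2^{\omega(x_0)+k-2}$ and reindexing the sum over $L$-golden $x_0\in D$ as the double sum over $T\in\mathcal{M}$ and $x\in T$ turns the second-case count into $\sum_{T\in\mathcal{M}}\sum_{x\in T}2^{\omega(x)+k-2}$, which together with the first term $2^{k-1}$ gives the claimed identity.

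The main obstacle I anticipate is making the description of $\mathcal{M}$ rigorous — showing that the maximal $L$-golden rooted subtrees with root below $b$ are exactly these connected components, that each is genuinely a rooted tree poset, and that the bookkeeping identity $\omega(x)=h(x)+1$ holds — together with arranging the discussion so that the two a priori cases ``$b$ is $L$-golden'' and ``$b$ is not $L$-golden'' collapse into the single clean statement that there are exactly $k-1$ $L$-golden elements $z$ with $b\le_P z<_P r$.
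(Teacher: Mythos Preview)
Your proposal is correct and follows essentially the same approach as the paper: both reduce via \Cref{L-golden} to counting $L$-golden chains through the root and then enumerate these chains by their minimum element, splitting into the cases $\min C\ge_P b$ and $\min C<_P b$. Your treatment is in fact a bit more careful than the paper's---you explicitly verify that $\mathcal{M}$ partitions the $L$-golden elements below $b$ and justify the bookkeeping identity $\omega(x)=h(x)+1$, whereas the paper relies on \Cref{golden partition} and a somewhat terse count; the only omission is the degenerate case where $P$ is a chain (no branch vertex), which is trivial.
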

We start with a preparatory lemma before proving the theorem. 
\begin{lemma}\label{golden partition}
	With notation as in \Cref{fiber}, let $ M\in\partial\inverse(L) $, and let $ v_1<_P\cdots<_Pv_m $ be the promotion chain of $ M $. The promotion chain is an $ L $-golden chain containing $ L\inverse(n) $, and \[\{v_1,\ldots,v_m\}\subset\bigcup_{T\in\mathcal{M}}T\cup\{c_1,\ldots,c_k\}.\]
\end{lemma}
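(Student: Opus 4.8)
The plan is to establish the two assertions of the lemma separately. The second one — that the promotion chain of $M$ lies in $\bigcup_{T\in\mathcal{M}}T\cup\{c_1,\dots,c_k\}$ — drops out quickly once the first is known, so the substance is in showing that the promotion chain of $M$ is an $L$-golden chain whose top element is $L^{-1}(n)$; this is essentially the content behind \Cref{L-golden}.

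Write $L=\partial(M)$ and let $v_1<_P\cdots<_P v_m$ be the promotion chain of $M$. From the defining formula for $\partial$ we have $\partial(M)(v_m)=n$, so $v_m=L^{-1}(n)$, which in a rooted tree poset means $v_m=r$. The first step is to show $M(v_1)<M(v_2)<\cdots<M(v_m)$: since $v_{i+1}$ is the $M$-successor of $v_i$, it is the element of $U_i:=\{y:y>_P v_i\}$ of least $M$-label, and because $v_i\in U_{i-1}$ while $v_i\notin U_i$ we have $U_i\subseteq U_{i-1}\setminus\{v_i\}$, every element of which has $M$-label exceeding $M(v_i)$; the base case $M(v_1)=1<M(v_2)$ is immediate. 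The second step is the golden property: fix $i<m$ and $y>_P v_i$, and recall $L(v_i)=M(v_{i+1})-1$. If $y$ is not on the promotion chain then $L(y)=M(y)-1$, and $M(y)\ge M(v_{i+1})$ with equality only for $y=v_{i+1}$, which is excluded, so $L(y)>L(v_i)$. If $y=v_j$ is on the chain, then $v_i<_P v_j$ forces $j>i$; if $j=m$ then $L(y)=n>L(v_i)$, and if $j<m$ then $L(y)=M(v_{j+1})-1>M(v_{i+1})-1=L(v_i)$ by the monotonicity of the $M$-labels along the chain. Since $v_m$ is maximal, every $v_i$ is $L$-golden, and this proves the first assertion.

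For the second assertion I would first record a structural fact: in a rooted tree poset with root $r$ and highest branch vertex $b$, every element $x$ satisfies either $x\geq_P b$ or $x<_P b$, and the elements $\geq_P b$ form the chain from $b$ up to $r$. Indeed, the up-set of any element is a chain, so $x$ and $b$ have a join $w$; if $w\neq b$ then $w>_P b$ and $w$ covers two distinct elements, one leading down to $x$ and one to $b$, so $w$ is a branch vertex above $b$ — impossible; hence $w=b$ and $x\leq_P b$, or else $x$ lies on the chain above $b$. Now take $v_i$ on the promotion chain; by the first assertion $v_i$ is $L$-golden. If $v_i\geq_P b$, then $v_i$ is an $L$-golden element at or above the highest branch vertex, so $v_i\in\{c_1,\dots,c_k\}$ by the definition of that chain. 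If instead $v_i<_P b$, then the maximal $L$-golden rooted subtree that contains $v_i$ among those confined to $\{x:x<_P b\}$ is a member of $\mathcal{M}$ containing $v_i$. In either case $v_i\in\bigcup_{T\in\mathcal{M}}T\cup\{c_1,\dots,c_k\}$, which is the desired containment.

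The step I expect to require the most care is the golden property in the first assertion: one must keep straight the three ways an element $y>_P v_i$ can sit relative to the promotion chain and translate correctly between $M$-labels and $L$-labels in each case. The second assertion, by contrast, is bookkeeping with the tree structure once the dichotomy about $b$ is in place, and its only real content is confirming that the subtrees in $\mathcal{M}$ together account for exactly the $L$-golden elements lying strictly below $b$.
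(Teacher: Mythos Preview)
Your proof is correct. The main difference from the paper is one of packaging rather than substance. For the first assertion, the paper simply asserts that the promotion chain of $M$ is an $L$-golden chain containing $L^{-1}(n)$, invoking the rooted-tree hypothesis and implicitly leaning on \Cref{L-golden} (which is imported from \cite{DK20}); you instead unpack this from the definition of $\partial$, establishing the monotonicity of $M$-labels along the chain and then checking the golden condition by cases. Your route is more self-contained, at the cost of a few extra lines.

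For the second assertion, the paper actually proves the \emph{equality}
\[
\bigcup_{T\in\mathcal{M}}T\cup\{c_1,\ldots,c_k\}=\bigcup_{C\in\mathcal{C}}C,
\]
where $\mathcal{C}$ is the set of $L$-golden chains containing $L^{-1}(n)$, and then reads off the containment. You instead argue directly that each $v_i$ lands in the right-hand side, via the dichotomy $v_i\geq_P b$ or $v_i<_P b$. Both arguments work for the lemma as stated, but note that the paper's stronger identity is what is actually invoked in the proof of \Cref{fiber} (to decompose the set of $L$-golden elements), so if you intend to use your version downstream you would need to supply the reverse containment separately. One small wrinkle in your structural fact about $b$: the sentence deriving a contradiction from $w\neq b$ tacitly assumes $w\neq x$ as well, i.e., that $x$ is not already on the chain above $b$; your closing ``or else'' clause handles this, but it would read more cleanly to dispose of the case $x\geq_P b$ at the outset.
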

\begin{proof}
	Because $ P $ is a rooted tree poset and $ L\inverse(n) $ is maximal, the promotion chain of $ M $ is an $ L $-golden chain containing $ L\inverse(n) $. The lemma follows from the identity \[\bigcup_{T\in\mathcal{M}}T\cup\{c_1,\ldots,c_k\}=\bigcup_{C\in\mathcal{C}}C,\] where $ \mathcal{C} $ is the set of all $ L $-golden chains containing $ L\inverse(n) $. First, note that $ c_k=L\inverse(n) $ by assumption. Now, let $ G $ be some $ L $-golden chain containing $ L\inverse(n) $. Suppose that $ x_1<_P\cdots<_Px_i $ are the elements of $ G $ less than the highest branch vertex $ b $. These form an $ L $-golden subtree in $ \mathcal{M} $; therefore they are contained in some maximal $ L $-golden subtree of $ P $. The remaining elements of $ G $ form some subset of $ \{c_1,\ldots,c_k\} $. This shows the inclusion ``$ \supset $''; the reverse inclusion is immediate from the definitions. 
\end{proof}
\begin{proof}[Proof of \Cref{fiber}]
	We count the labelings $ M\in\partial\inverse(L) $. Again take $ \mathcal{C} $ to be the set of $ L $-golden chains containing $ L\inverse(n) $. By \Cref{L-golden}, we have $ |\partial\inverse(L)|=|\mathcal{C}|. $ Let $ \mathcal{G} $ be the set of $ L $-golden elements of $ P $. Note that every element in $ \mathcal{C} $ contains the root of $ P $. Let $ x\in\mathcal{G} $, and suppose $ x $ is the starting (lowest) element of $ C $. We count the $ C\in\mathcal{C} $ with lowest element $ x $. Every $ L $-golden element greater than $ x $ is possibly an element in $ C $; there are $ \omega(x)+k-1 $ such elements (take $ \omega(x)=0 $ when $ x\in\{c_1,\ldots,c_k\} $). Thus, there are $ 2^{\omega(x)+k-2} $ possible $ L $-golden chains starting with $ x $. Hence, \[|\mathcal{C}|=\sum_{x\in\mathcal{G}}2^{\omega(x)+k-2}.\]
	The theorem then follows from \Cref{golden partition} and from noting the following: when the promotion chain of $ M $ consists solely of elements greater than or equal to the highest branch vertex, choosing the promotion chain of $ M $ is just choosing some subset of $ \{c_1,\ldots,c_{k-1}\} $. 
\end{proof}
While the formula given in \Cref{fiber} is slightly more complicated than \[|\partial\inverse(L)|=\sum_{x\in\mathcal{G}}2^{\omega(x)+k-2},\] the more expanded form is preferable as it allows us to compute $ |\partial\inverse(L)| $ easily when $ P $ is a chain. Moreover, we also see immediately that \Cref{fiber} is a generalization of Corollary 2.4 of \cite{BCF}. \Cref{fiber} also lets us enumerate the 2-sortable labelings of rooted tree posets: 
\begin{corollary}
	Let $ P $ be an $ n $-element rooted tree poset. Let $ \Sigma(P)'=\Sigma(P)\cap\im(\partial) $ denote the set of sortable labelings of $ P $ in which $ L\inverse(n) $ is maximal. Then the number of 2-promotion-sortable labelings of $ P $ is \[\sum_{L\in\Sigma(P)'}|\partial\inverse(L)|=\sum_{L\in\Sigma(P)'}\left(2^{k-1}+\sum_{T\in\mathcal{M}}\sum_{x\in T}2^{\omega(x)+k-2}\right).\]
\end{corollary}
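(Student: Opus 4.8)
The plan is to identify the $2$-promotion-sortable labelings with a disjoint union of fibers of $\partial$ lying over $\Sigma(P)'$, after which the corollary becomes \Cref{fiber} applied termwise.

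First I would unwind the definitions. A labeling $M$ is $2$-sortable precisely when $\partial^2(M)=\partial\bigl(\partial(M)\bigr)\in\call(P)$, i.e.\ precisely when $\partial(M)$ is sortable; hence the set of $2$-sortable labelings is exactly $\partial\inverse(\Sigma(P))$. By \Cref{L-golden}, a labeling lies in $\im(\partial)$ if and only if it sends $n$ to a maximal element, so any sortable labeling outside $\im(\partial)$ contributes an empty preimage under $\partial$, whence $\partial\inverse(\Sigma(P))=\partial\inverse(\Sigma(P)')$. Since the fibers $\partial\inverse(L)$ for distinct labelings are pairwise disjoint and their union over $L\in\Sigma(P)'$ is $\partial\inverse(\Sigma(P)')$, counting gives that the number of $2$-sortable labelings equals $\sum_{L\in\Sigma(P)'}|\partial\inverse(L)|$. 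This is the first claimed equality. (As a sanity check one notes that every $L\in\Sigma(P)'$ does have nonempty preimage, since $\Sigma(P)'\subseteq\im(\partial)$ by definition, and that linear extensions, being $2$-sortable with $L\inverse(n)$ maximal, are duly accounted for.)

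For the second equality I would simply verify that each $L\in\Sigma(P)'$ satisfies the hypothesis of \Cref{fiber}: by definition $L\in\im(\partial)$, equivalently $L\inverse(n)$ is maximal. Thus \Cref{fiber} applies to every summand and lets us replace $|\partial\inverse(L)|$ by $2^{k-1}+\sum_{T\in\mathcal{M}}\sum_{x\in T}2^{\omega(x)+k-2}$, where $k$, $\mathcal{M}$, and $\omega$ denote the quantities of \Cref{fiber} attached to $L$; summing over $L\in\Sigma(P)'$ yields the stated expression. The only step that requires any care is the passage from $\Sigma(P)$ to $\Sigma(P)'$ via \Cref{L-golden} (so that the fiber decomposition is indexed exactly as in the statement), and there is no substantive obstacle: this corollary is a direct bookkeeping consequence of \Cref{fiber}.
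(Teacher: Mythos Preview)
Your argument is correct and is exactly the intended one: the paper presents this corollary without an explicit proof, having already remarked (at the start of \Cref{Promotion Fibers}) that counting $2$-sortable labelings reduces to counting preimages of sortable labelings, and the corollary then follows immediately from \Cref{fiber} via the bookkeeping you give.
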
 
\Cref{fiber} also yields a similar formula for the $ k $-sortable labelings for any $ k $ between 1 and $ n-1 $. However, such a formula would require being able to easily determine if a labeling is $ (k-1) $-sortable, which is not so easy for $ k>2 $, whereas determining if a labeling is sortable is simple. 

\Cref{fiber} also allows us to study the {degree of noninvertibility} of $ \partial $. In their 2020 paper \cite{DP20}, Defant and Propp defined the \emph{degree of noninvertibility} of a map $ f:X\to X $ to be \[\deg(f)=\frac{1}{|X|}\sum_{x\in X}|f\inverse(x)|^2.\] For any function $ f:X\to X $, we have that $ 1\leq\deg(f)\leq|X| $, and the lower bound is achieved when $ f $ is injective, whereas the upper bound is achieved when $ f $ is a constant map. More generally, the degree of a $ k $-to-1 map is $ k $. The following corollary can be proven na\"{i}vely---by simply computing that $ \partial $ is $ n $-to-1 when $ P $ is a rooted star---or by applying \Cref{fiber}: 
\begin{corollary}\label{star degree}
	If $ P $ is a rooted star poset with $ n $ elements, then $ \deg(\partial)=n $.
\end{corollary}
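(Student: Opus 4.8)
The plan is to show directly that $\partial$ is an $n$-to-$1$ map on $\Lambda(P)$, after which the degree computation is immediate. In a rooted star poset the root $r$ is the unique maximal element, so by \Cref{L-golden} a labeling $L$ lies in $\im(\partial)$ precisely when $L\inverse(n)=r$. For such an $L$, the same proposition gives that $|\partial\inverse(L)|$ equals the number of $L$-golden chains of $P$ containing $r$. The chains of $P$ through $r$ are exactly $\{r\}$ together with the two-element chains $\{\ell,r\}$, one for each of the $n-1$ leaves $\ell$. Since $L(r)=n$ is the largest label, every leaf is $L$-golden, and $r$ is $L$-golden vacuously; hence all $n$ of these chains are $L$-golden, so $|\partial\inverse(L)|=n$ for every $L\in\im(\partial)$. (Alternatively, one feeds the rooted star into \Cref{fiber}: the highest branch vertex is $r$, so the maximal $L$-golden chain above it is $\{r\}$ and $k=1$, whence $2^{k-1}=1$; meanwhile $\mathcal{M}$ consists of the $n-1$ singleton subtrees $\{\ell\}$, each contributing $2^{\omega(\ell)+k-2}=2^{0}=1$, for a total of $1+(n-1)=n$.)

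Next I would record that, $\partial$ being $n$-to-$1$ onto its image, we have $|\im(\partial)|=|\Lambda(P)|/n=n!/n=(n-1)!$, while all other fibers are empty. Substituting into the definition of the degree of noninvertibility,
\[
\deg(\partial)=\frac{1}{|\Lambda(P)|}\sum_{L\in\Lambda(P)}|\partial\inverse(L)|^2=\frac{1}{n!}\sum_{L\in\im(\partial)}n^2=\frac{(n-1)!\,n^{2}}{n!}=n .
\]

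There is essentially no hard step here: the only thing to notice is that every leaf of a rooted star is $L$-golden for any labeling $L$, which holds because the root necessarily carries the label $n$; the fiber count then drops out of \Cref{L-golden}, and the remainder is bookkeeping. If any care is needed at all, it is only in checking the degenerate small cases (e.g.\ $n=1,2$, where the ``star'' is a chain), for which the identity $\deg(\partial)=n$ is readily verified by hand.
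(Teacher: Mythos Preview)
Your proof is correct and matches the paper's indicated approach exactly: the paper states that the corollary ``can be proven na\"ively---by simply computing that $\partial$ is $n$-to-1 when $P$ is a rooted star---or by applying \Cref{fiber},'' and you carry out both of these computations in full.
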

\begin{thm}\label{noninvertibility bound}
	Let $ P $ be a rooted tree poset with $ n $ elements. Then \[n\leq\deg(\partial),\] with equality if and only if $ P $ is a rooted star poset. 
\end{thm}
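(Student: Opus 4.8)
The plan is to reduce the statement to a Cauchy--Schwarz estimate on the fiber sizes of $ \partial $ and then analyze the equality case using \Cref{L-golden}. Write $ r $ for the root of $ P $ (the unique maximal element, which lies above every other element). By \Cref{L-golden}, $ \partial\inverse(L)=\emptyset $ unless $ L\inverse(n)=r $, i.e.\ unless $ L(r)=n $; let $ S $ be the set of such labelings, so $ \abs{S}=(n-1)! $. Since $ \partial $ is a well-defined self-map of $ \Lambda(P) $, its fibers partition $ \Lambda(P) $, and the ones outside $ S $ are empty, so $ \sum_{L\in S}\abs{\partial\inverse(L)}=\abs{\Lambda(P)}=n! $. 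By Cauchy--Schwarz (equivalently, the power-mean inequality),
\[ \sum_{L\in S}\abs{\partial\inverse(L)}^2\ \geq\ \frac{1}{\abs{S}}\Big(\sum_{L\in S}\abs{\partial\inverse(L)}\Big)^2=\frac{(n!)^2}{(n-1)!}=n\cdot n!. \]
Dividing by $ \abs{\Lambda(P)}=n! $ gives $ \deg(\partial)\geq n $, with equality if and only if $ \abs{\partial\inverse(L)} $ is independent of $ L\in S $, in which case its common value is forced to be $ n!/(n-1)!=n $.

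It then remains to show that $ \abs{\partial\inverse(\cdot)}\equiv n $ on $ S $ exactly when $ P $ is a rooted star. For one direction, suppose $ P $ is a rooted star with leaves $ \ell_1,\dots,\ell_{n-1} $ and let $ L\in S $. Every element of $ P $ is $ L $-golden, since the only element lying above a leaf is $ r $ and $ L(r)=n $; hence the $ L $-golden chains containing $ L\inverse(n)=r $ are precisely $ \{r\} $ and the $ \{\ell_i,r\} $ for $ 1\leq i\leq n-1 $ (the leaves being pairwise incomparable). So \Cref{L-golden} gives $ \abs{\partial\inverse(L)}=n $ for all $ L\in S $; this recovers \Cref{star degree} and yields $ \deg(\partial)=n $ in the star case.

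For the converse, suppose $ P $ is not a rooted star, and let $ L^{*} $ be any linear extension of $ P $. Then $ L^{*}(r)=n $, so $ L^{*}\in S $, and because $ L^{*} $ is order-preserving every element of $ P $ is $ L^{*} $-golden; thus by \Cref{L-golden}, $ \abs{\partial\inverse(L^{*})} $ equals the number of (nonempty) chains of $ P $ containing $ r $. I claim this number strictly exceeds $ n $: the chain $ \{r\} $ together with the $ n-1 $ two-element chains $ \{x,r\} $ for $ x\neq r $ (valid since $ x<_{P}r $) are $ n $ distinct chains, and since $ P $ is not a star there is a non-root element $ x $ not covered by $ r $, hence some $ z $ with $ x<_{P}z<_{P}r $, giving the further chain $ \{x,z,r\} $. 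Therefore $ \abs{\partial\inverse(L^{*})}>n $, so $ \abs{\partial\inverse(\cdot)} $ is not constant on $ S $ and the Cauchy--Schwarz bound is strict: $ \deg(\partial)>n $.

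The only genuinely load-bearing input here is \Cref{L-golden}; the rest is bookkeeping, so I expect the care is concentrated in the equality analysis. The two elementary facts to pin down are that a rooted tree poset has a unique maximal element below which everything sits (so the fibers are supported on $ S $ and the chains $ \{x,r\} $ are genuine) and that $ P $ is a rooted star if and only if every non-root element is covered by $ r $ (used to produce the chain $ \{x,z,r\} $). The key conceptual point, and the step I would single out as the crux, is recognizing that the linear extension is the right witness: all of its elements are golden, which converts the fiber count into a transparent count of chains through the root and makes the non-star case immediate.
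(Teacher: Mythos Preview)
Your proof is correct and follows essentially the same route as the paper: both use Cauchy--Schwarz on the fiber sizes over $\im(\partial)=\{L:L\inverse(n)=r\}$ (of cardinality $(n-1)!$) to get $\deg(\partial)\geq n$, and both handle the equality case via \Cref{L-golden} by exploiting that a linear extension makes every element golden so that its fiber counts all chains through the root. The only cosmetic difference is in the equality analysis: the paper argues that non-linear labelings in $\im(\partial)$ have strictly smaller fibers than linear ones (hence constancy forces every such labeling to be linear, which pins $P$ down as a star), whereas you first observe the common fiber value must be $n$ and then exhibit a single labeling---a linear extension---whose fiber exceeds $n$ when $P$ is not a star. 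Both arguments are equivalent in spirit and rely on the same key observation about linear extensions.
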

\begin{proof}
	Recall that a labeling $ L$ is in $\im(\partial) $ if and only if $ L\inverse(n) $ is maximal. Hence, $ |\im(\partial)|=(n-1)! $. The Cauchy-Schwarz inequality gives \[(n!)^2=\left(\sum_{L\in\im(\partial)}|\partial\inverse(L)|\right)^2\leq\sum_{L\in\im(\partial)}1^2\sum_{L\in\im(\partial)}|\partial\inverse(L)|^2=|\im(\partial)|n!\deg(\partial),\] and rearranging gives $ n\leq\deg(\partial) $, as desired. 
	
	For the second statement, recall that equality in the Cauchy-Schwarz inequality holds if and only if the two vectors are parallel. Hence, equality holds if and only if $ |\partial\inverse(L)| $ is the same for all $ L\in\im(\partial) $. This forces every $ L\in\im(\partial) $ to be a linear extension: Each labeling in $ \partial\inverse(L) $ corresponds to a $ L $-golden chain containing the root of $ P $. When $ L $ is linear, every chain containing the root is $ L $-golden. When $ L $ is not linear, there exists some element $ x $ that is not $ L $-golden. In particular, if $ r $ is the root of $ P$, $ x<_Pr $ is not an $ L $-golden chain, which reduces $ |\partial\inverse(L)| $, compared to the count for linear extensions. Thus, every labeling in $ \im(\partial) $ is linear, which forces $ P $ to be a rooted star poset.
\end{proof}

\section{Quasi-Tangled Labelings of Inflated Rooted Trees with Deflated Leaves}\label{Quasi-Tangled}
\subsection{Inflated Rooted Trees}
\begin{definition}
	Let $ Q $ be a finite poset. An \emph{inflation} of $ Q $ is a poset $ P $ along with a surjective map $ \varphi:P\to Q $ satisfying:\begin{enumerate}
		\item For all $ v\in Q $, the set $ \varphi\inverse(v) $ has a unique minimal element.
		\item If $ x,y\in P $ are such that $ \varphi(x)\neq\varphi(y) $, then $ x<_Py $ if and only if $ \varphi(x)<_Q\varphi(y) $.
	\end{enumerate}
	An \emph{inflated rooted tree poset} is an inflation of a rooted tree poset. An \emph{inflated rooted tree poset with deflated leaves} is an inflation of a rooted tree poset $ Q $ such that for all leaves $ \ell\in Q $ we have $ |\varphi\inverse(\ell)|=1 $. 
\end{definition}
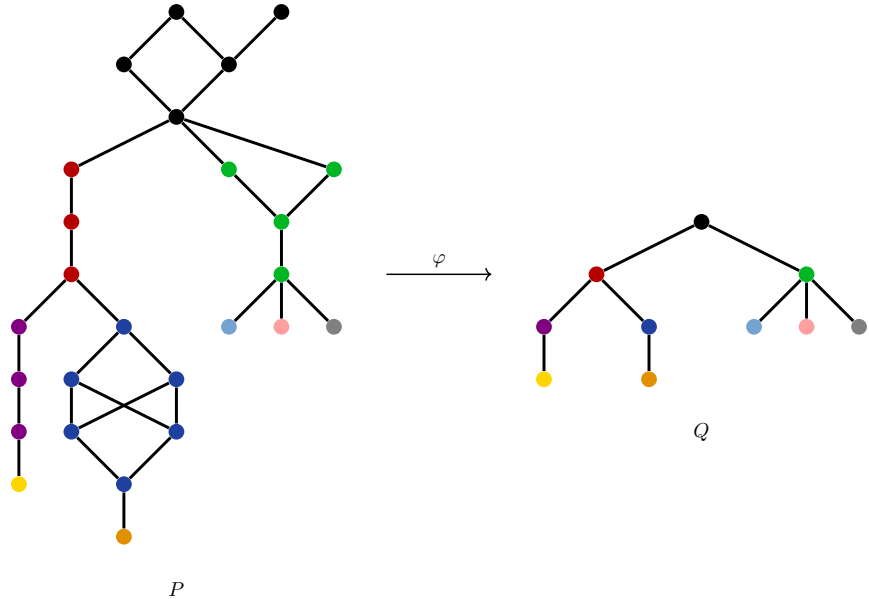
\begin{figure}[h!]
	\resizebox{0.7\textwidth}{!}{
		\begin{tikzpicture}[roundnode/.style={circle, inner sep=0pt, minimum size=3mm}]
			
			\node[roundnode, fill={rgb:yellow,5;red,1}] (I1) at (2,1) {};
			\node[roundnode, fill={rgb:red,3;blue,3}] (F1) at (2,4) {};
			\node[roundnode, fill={rgb:red,3;blue,3}] (F2) at (2,3) {};
			\node[roundnode, fill={rgb:red,3;blue,3}] (F3) at (2,2) {};
			\node[roundnode, fill={rgb:orange,5;black,1;yellow,2}] (J1) at (4,0) {};    
			\node[roundnode, fill={rgb:red,1;green,2;blue,5}] (G6) at (4,1) {};  
			\node[roundnode, fill={rgb:red,1;green,2;blue,5}] (G4) at (3,2) {};
			\node[roundnode, fill={rgb:red,1;green,2;blue,5}] (G5) at (5,2) {};
			\node[roundnode, fill={rgb:red,1;green,2;blue,5}] (G2) at (3,3) {};
			\node[roundnode, fill={rgb:red,1;green,2;blue,5}] (G3) at (5,3) {};
			\node[roundnode, fill={rgb:red,1;green,2;blue,5}] (G1) at (4,4) {};
			\node[roundnode, fill={rgb:red,5;black,2}] (B3) at (3,5) {};
			\node[roundnode, fill={rgb:red,5;black,2}] (B2) at (3,6) {};
			\node[roundnode, fill={rgb:red,5;black,2}] (B1) at (3,7) {};
			\node[roundnode, fill={rgb:red,3;white,5}] (D1) at (7,4) {};
			\node[roundnode, fill={rgb:blue,4;white,5;green,2}] (D2) at (6,4) {};
			\node[roundnode, fill={rgb:black,5;white,5}] (D3) at (8,4) {};
			\node[roundnode, fill={rgb:green,5;blue,1;black,1}] (C4) at (7,5) {};
			\node[roundnode, fill={rgb:green,5;blue,1;black,1}] (C3) at (7,6) {}; 
			\node[roundnode, fill={rgb:green,5;blue,1;black,1}] (C1) at (6,7) {};
			\node[roundnode, fill={rgb:green,5;blue,1;black,1}] (C2) at (8,7) {};
			\node[roundnode, fill=black] (A5) at (5,8) {};
			\node[roundnode, fill=black] (A3) at (4,9) {};
			\node[roundnode, fill=black] (A4) at (6,9) {};
			\node[roundnode, fill=black] (A2) at (5,10) {};
			\node[roundnode, fill=black] (A1) at (7,10) {}; 
			\node (P) at (5,-1) {$ P $};
			
			\path (A1) edge [ultra thick] (A4);
			\path (A2) edge [ultra thick] (A3);
			\path (A2) edge [ultra thick] (A4);
			\path (A3) edge [ultra thick] (A5);
			\path (A4) edge [ultra thick] (A5);
			\path (A5) edge [ultra thick] (B1);
			\path (A5) edge [ultra thick] (C1);
			\path (A5) edge [ultra thick] (C2);
			\path (C1) edge [ultra thick] (C3);
			\path (C2) edge [ultra thick] (C3);
			\path (C3) edge [ultra thick] (C4);
			\path (C4) edge [ultra thick] (D1);
			\path (C4) edge [ultra thick] (D2);
			\path (C4) edge [ultra thick] (D3);
			\path (B1) edge [ultra thick] (B2);
			\path (B2) edge [ultra thick] (B3);
			\path (B3) edge [ultra thick] (F1);
			\path (B3) edge [ultra thick] (G1);
			\path (F1) edge [ultra thick] (F2);
			\path (F2) edge [ultra thick] (F3);
			\path (F3) edge [ultra thick] (I1);
			\path (G1) edge [ultra thick] (G2);
			\path (G1) edge [ultra thick] (G3);
			\path (G2) edge [ultra thick] (G4);
			\path (G2) edge [ultra thick] (G5);
			\path (G3) edge [ultra thick] (G4);
			\path (G3) edge [ultra thick] (G5);
			\path (G5) edge [ultra thick] (G6);
			\path (G4) edge [ultra thick] (G6);
			\path (G6) edge [ultra thick] (J1);
			
			\node[roundnode, fill=black] (R1) at (15,6) {};
			\node[roundnode, fill={rgb:red,5;black,2}] (R2) at (13,5) {};
			\node[roundnode, fill={rgb:green,5;blue,1;black,1}] (R3) at (17,5) {};
			\node[roundnode, fill={rgb:red,3;blue,3}] (R4) at (12,4) {};
			\node[roundnode, fill={rgb:red,1;green,2;blue,5}] (R5) at (14,4) {};    
			\node[roundnode, fill={rgb:blue,4;white,5;green,2}] (R6) at (16,4) {};  
			\node[roundnode, fill={rgb:red,3;white,5}] (R7) at (17,4) {};
			\node[roundnode, fill={rgb:black,5;white,5}] (R8) at (18,4) {};
			\node[roundnode, fill={rgb:yellow,5;red,1}] (R9) at (12,3) {};
			\node[roundnode, fill={rgb:orange,5;black,1;yellow,2}] (R10) at (14,3) {};
			\node (Q) at (15,2) {$ Q $};
			
			\path (R1) edge [ultra thick] (R2);
			\path (R1) edge [ultra thick] (R3);
			\path (R2) edge [ultra thick] (R4);
			\path (R2) edge [ultra thick] (R5);
			\path (R3) edge [ultra thick] (R6);
			\path (R3) edge [ultra thick] (R7);
			\path (R3) edge [ultra thick] (R8);
			\path (R4) edge [ultra thick] (R9);
			\path (R5) edge [ultra thick] (R10);
			
			\draw[->, thick](9,5) -- (11,5) node[pos=0.5, above]{$ \vphi $};

	\end{tikzpicture}		}
	\caption{\label{fig:inflated rooted tree}$ (P,\vphi) $ is an inflated rooted tree with deflated leaves, where $ P $ inflates $ Q $. The colors illustrate the preimages of the elements of $ Q $. }
\end{figure}

\begin{remark}\label{reduced}
	We say that a rooted tree poset is \emph{reduced} if no vertex has exactly one child. Since the composition of inflations is also an inflation, we see that every inflated rooted tree poset is the inflation of a reduced rooted tree poset. When we refer to an inflation $ \vphi:P\to Q $ of a rooted tree poset with deflated leaves we will assume that the subposet of $ Q $ obtained by removing its leaves is reduced.
\end{remark}

\subsection{Setup and Statement of the Main Theorem}
In the following, we give a formula enumerating the quasi-tangled labelings of inflated rooted tree posets with deflated leaves. The following is shown in the proof of \Cref{k-untangled}, but we state it as its own lemma here:
\begin{lemma}[\cite{DK20}, Proof of Theorem 2.10]\label{lower order ideal of size 3}
	Let $ P $ be an $ n $-element poset and $ L $ a labeling of $ P $ such that $ L_{n-k}\not\in\call(P) $. Then $ \{L_{n-k}\inverse(1),\ldots,L_{n-k}\inverse(k)\} $ forms a lower order ideal of size $ k $ that is not an antichain, and the restriction of $ L_{n-k} $ to this set is not a linear extension. 
\end{lemma}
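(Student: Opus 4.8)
The plan is to set $M=L_{n-k}=\partial^{n-k}(L)$ and extract everything from \Cref{frozen elements}. Applying that lemma with $\gamma=n-k$ shows that $M\inverse(k+1),\ldots,M\inverse(n)$ are all frozen with respect to $M$. Unwinding the definition of ``frozen,'' this says precisely that for every $j\in\{k+1,\ldots,n\}$ the set $\{x\in P\;|\;j\le M(x)\le n\}$ is an upper order ideal of $P$. Taking $j=k+1$ and passing to complements, the set $I:=\{M\inverse(1),\ldots,M\inverse(k)\}$ is a lower order ideal of $P$, and it has exactly $k$ elements because $M$ is a bijection. This disposes of the ``lower order ideal of size $k$'' claim immediately.

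The substantive part is to show that the restriction $M|_I$ is not a linear extension of the subposet induced on $I$; note this is stronger than, and hence also yields, the assertion that $I$ is not an antichain, since every labeling of an antichain is order-preserving. I would argue by contradiction, assuming $M|_I$ is a linear extension. The auxiliary observation is that $M|_{P\setminus I}$ is automatically a linear extension of the induced subposet: if $x<_Py$ with $x,y\notin I$ but $M(x)>M(y)$, then $x$ belongs to the upper order ideal $\{z\in P\;|\;M(x)\le M(z)\le n\}$ while $y$ does not, contradicting $y>_Px$. Now take any $x<_Py$ in $P$. If $x,y\in I$, then $M(x)<M(y)$ by the assumption that $M|_I$ is a linear extension; if $x,y\notin I$, then $M(x)<M(y)$ by the auxiliary observation; if $x\in I$ and $y\notin I$, then $M(x)\le k<k+1\le M(y)$; and the case $x\notin I$, $y\in I$ is impossible because $P\setminus I$ is an upper order ideal. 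Hence $M$ would be a linear extension of $P$, contradicting the hypothesis $L_{n-k}\notin\call(P)$.

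I do not expect a genuine obstacle here: the only thing to be careful about is tracking which direction the order-ideal conditions go (lower versus upper) and recording that $I$ is a lower order ideal if and only if $P\setminus I$ is an upper order ideal, a fact already noted in \Cref{Background}. Everything else is a short case check, and the result is essentially a direct consequence of \Cref{frozen elements}.
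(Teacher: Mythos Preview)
Your argument is correct. The paper does not actually supply its own proof of this lemma; it is imported verbatim from the proof of Theorem 2.10 in \cite{DK20}, so there is nothing in the present paper to compare against line by line. That said, your route via \Cref{frozen elements} is exactly the intended one: the frozen-element lemma (which is itself the engine behind Theorem 2.10 in \cite{DK20}) immediately gives that $\{L_{n-k}\inverse(k+1),\ldots,L_{n-k}\inverse(n)\}$ is an upper order ideal, and your case analysis showing that a linear extension on $I$ would force $L_{n-k}\in\call(P)$ is clean and complete. The one small thing worth saying explicitly (and you do say it) is that the auxiliary claim about $M|_{P\setminus I}$ uses not just that $P\setminus I$ is an upper order ideal but that \emph{every} top segment $\{z:M(z)\ge j\}$ for $j\ge k+1$ is an upper order ideal; you invoke this correctly when you pick $j=M(x)$.
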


\begin{lemma}\label{L^-1(n) position}
	If $ L $ is a quasi-tangled labeling of an $ n $-element inflated rooted tree poset $ P $ with deflated leaves, then one of the following holds:
	\begin{enumerate}
		\item $ L\inverse(n-1) $ is minimal;
		\item $ L\inverse(n) $ is minimal;
		\item $ L\inverse(n) $ covers a minimal element. 
	\end{enumerate} In each case, the element in question is involved in an inversion after $ n-3 $ promotions.
\end{lemma}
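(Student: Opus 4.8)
The plan is to run everything backward from time $n-3$. Since $L$ is quasi-tangled, $L_{n-3}\notin\call(P)$, so \Cref{lower order ideal of size 3} with $k=3$ gives that $I:=\{L_{n-3}\inverse(1),L_{n-3}\inverse(2),L_{n-3}\inverse(3)\}$ is a lower order ideal of size $3$ that is not an antichain and on which $L_{n-3}$ fails to be a linear extension. The failure produces a comparable pair $y<_Px$ with $x,y\in I$ and $L_{n-3}(y)>L_{n-3}(x)$; call such a pair an \emph{inversion in $I$}. Since $L_{n-3}$ assigns exactly the labels $\{1,2,3\}$ to $I$, the larger label $L_{n-3}(y)$ of any inversion in $I$ lies in $\{2,3\}$. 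The key step is to iterate \Cref{inversion} for $\gamma=n-3,n-4,\ldots,1$: each application both propagates the inversion back one step and records $L_\gamma(y)=L_{\gamma-1}(y)-1$, so telescoping yields $L(y)=L_{n-3}(y)+(n-3)\in\{n-1,n\}$. Hence the lower element $y$ of any inversion in $I$ equals $L\inverse(n-1)$ or $L\inverse(n)$, and that element is, by construction, involved in an inversion with respect to $L_{n-3}$. (This uses $n\ge 4$ so that the indices $\gamma$ stay in $\{1,\ldots,n-2\}$; for $n\le 3$ one has $L_{n-3}=L$ and the same argument applies with \Cref{inversion} unused.)

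It remains to locate $y$ in the Hasse diagram, and I would split into two cases. \emph{Case 1: some inversion $(y,x)$ in $I$ has $y$ minimal in $P$} — equivalently, $y$ is minimal in the lower order ideal $I$. Then $L_{n-3}(y)=2$ forces $y=L\inverse(n-1)$ minimal, which is conclusion~(1); and $L_{n-3}(y)=3$ forces $y=L\inverse(n)$ minimal, which is conclusion~(2). \emph{Case 2: no inversion in $I$ has its lower element minimal in $P$.} Fix any inversion $(y,x)$ in $I$. Then $y$ is not minimal in $I$ (by the case assumption) and not maximal in $I$ (since $y<_Px\in I$); a three-element poset with such an element is a chain with that element in the middle, so $I$ is the chain $u\lessdot_P y\lessdot_P x$ for its remaining element $u$, with the covering relations holding in $P$ because any element of $P$ strictly between two elements of $I$ lies in $I$. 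If $(u,y)$ or $(u,x)$ were an inversion, its lower element $u$ would be minimal in $P$, contradicting the case assumption; hence $L_{n-3}(u)=1$, and then the inversion $(y,x)$ forces $L_{n-3}(y)=3$ and $L_{n-3}(x)=2$. Therefore $L(y)=n$, so $y=L\inverse(n)$ covers the minimal element $u$, which is conclusion~(3), again with $L\inverse(n)$ involved in the inversion $(y,x)$ at time $n-3$.

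I expect the only real content to sit in Case 2 — verifying that the middle element of the forced three-chain is exactly $L\inverse(n)$ and that its unique lower cover $u$ is a genuine minimal element of $P$ — together with the routine bookkeeping of iterating \Cref{inversion} and disposing of the small cases. It is worth remarking that this argument uses only that $P$ is a finite poset; the inflated-rooted-tree-with-deflated-leaves hypothesis is not actually needed here and is present only because the lemma will be applied in that setting.
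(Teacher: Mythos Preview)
Your proof is correct and takes a genuinely different route from the paper's.

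The paper first invokes the inflated-rooted-tree-with-deflated-leaves hypothesis to classify the possible isomorphism types of the three-element lower order ideal $Y$: because every minimal element of such a poset has a \emph{unique} parent, the $\Lambda$-shape (one minimal element covered by two incomparable elements) is ruled out, leaving only the three types (3-chain, V-shape, and 2-chain plus singleton). The paper then works through each type separately, locating $L_{n-3}^{-1}(3)$ and $L_{n-3}^{-1}(2)$ in each and applying \Cref{inversion} repeatedly.

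You bypass this structural classification entirely. Instead of casing on the shape of $I$, you case on whether any inversion in $I$ has a $P$-minimal lower element. Your Case~1 swallows the paper's V-shape and 2-chain-plus-singleton cases, part of its 3-chain case, and also the $\Lambda$-shape that the paper had to exclude; your Case~2 isolates the single remaining configuration (the 3-chain labeled $1,3,2$ from bottom to top) by the nice observation that an element of a three-element poset that is neither minimal nor maximal forces a chain. This is why, as you correctly remark, your argument needs nothing about $P$ beyond finiteness. The paper's decomposition is more concrete and foreshadows the casework in the proof of \Cref{quasi tangled enumeration}; yours is cleaner and more general.
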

\begin{proof}
	By \Cref{lower order ideal of size 3}, the set $ Y=\{L_{n-3}\inverse(1),L_{n-3}\inverse(2),L_{n-3}\inverse(3)\} $ forms a lower order ideal that is not an antichain. Moreover, $ L_{n-3} $ restricted to this set is not a linear extension.
	
	Now, note that every non-antichain lower order ideal $ I $ with three elements occurring in an inflated rooted tree poset with deflated leaves is one of the following: \begin{enumerate}
		\item $ I $ is a chain with 3 elements;
		\item $ I $ is the rooted tree poset on 3 elements with 2 leaves;
		\item $ I $ is the disjoint union of a singleton and a chain of size 2.
	\end{enumerate}
	
	Suppose $ Y $ is of type (1). If $ L_{n-3}\inverse(3) $ is maximal in $ I $, then $ L_{n-3}\inverse(2)\lessdot L_{n-3}\inverse(1)\lessdot L_{n-3}\inverse(3) $, and repeatedly applying \Cref{inversion} tells us that $ L\inverse(n-1) $ is minimal. If $ L_{n-3}\inverse(3) $ is not maximal, then it covers a minimal element or is minimal itself. We may again repeatedly apply \Cref{inversion} to see that $ L\inverse(n) $ either covers a minimal element or is minimal. 
	
	Suppose $ Y $ is of type (2). It follows that $ L_{n-3}\inverse(3) $ cannot be maximal, because then $ L_{n-3} $ would be a linear extension. So, $ L_{n-3}\inverse(3) $ is minimal. The element covering $ L_{n-3}\inverse(3) $ has a smaller label in $ L_{n-3} $, so $ L\inverse(n) $ is minimal by \Cref{inversion}.
	
	Lastly, suppose $ Y $ is of type (3). If $ L_{n-3}\inverse(3) $ occupies its own component, then $ L_{n-3}\inverse(2)\lessdot L_{n-3}\inverse(1) $ and $ L\inverse(n-1) $ is minimal. Otherwise, $ L_{n-3}\inverse(3) $ is covered by either $ L_{n-3}\inverse(1) $ or $ L_{n-3}\inverse(2) $, so $ L\inverse(n) $ must be minimal. 
\end{proof}

\Cref{quasi tangled enumeration} enumerates the quasi-tangled labelings of inflated rooted trees with deflated leaves. In particular, note that this relatively large class of posets includes rooted tree posets. While this particular class of posets seems artificial, it will be important that the posets we are working with have the property that if one removes a minimal element or an element covering a minimal element from the poset, then the resulting poset remains an inflated rooted tree. 

We remark here that the quasi-tangled labelings of a poset are those labelings $ L $ such that $ L_{n-3} $ is not a linear extension but $ L_{n-2} $ is a linear extension, i.e., $ L $ is not tangled. To count these labelings, we condition on the positions of $ L\inverse(n-1) $ and $ L\inverse(n) $ and keep track of where $ L_{\gamma}\inverse(n-1-\gamma) $ and $ L_\gamma\inverse(n-2-\gamma) $ end up (after $ n-3 $ promotions these are the elements that are labeled 2 and 1, respectively). In particular, we consider a uniformly random labeling $ L $ of $ P $ with $ L\inverse(n) $ or $ L\inverse(n-1) $ fixed and calculate the probability that, at each ``branch vertex,'' the elements in question ``get pulled down'' in the desired direction, i.e., in the direction such that $ L_{n-3} $ is not a linear extension. The proof of the theorem involves a lot of casework, which we prove beforehand in several technical lemmas.

Let $ (P,\varphi) $ be an inflation of a rooted tree poset $ Q $. For each leaf $ \ell $ of $ Q $, there exists a unique path in the Hasse diagram of $ Q $ from $ \ell $ to the root. Let the elements of this path be called $ u_{\ell,0},u_{\ell,1},\ldots,u_{\ell,\omega(\ell)} $, where $ u_{\ell,0}=\ell $ and for all $ 1\leq j\leq\omega(\ell) $, $ u_{\ell,j} $ covers $ u_{\ell,j-1} $. Then define \[b_{\ell,j}=\sum_{v\leq_Q u_{\ell,j-1}}|\varphi\inverse(v)|\qquad\mathrm{and}\qquad c_{\ell,j}=\sum_{v<_Q u_{\ell,j}}|\varphi\inverse(v)|.\] Note that $ \frac{b_{\ell,j}}{c_{\ell,j}} $ is the fraction of elements below the minimal element of $ \varphi\inverse(u_{\ell,j}) $ that ``lie in the direction'' of $ \varphi\inverse(\ell) $. The following enumerates the number of tangled labelings of $ P $:
\begin{thm}[\cite{DK20}, Theorem 3.5]\label{tangled enumeration}
	If $ P $ is an inflation of a rooted tree poset $ Q $, then the number of tangled labelings of $ P $ is \[(n-1)!\sum_{i=1}^{s}\prod_{j=1}^{\omega(i)}\frac{b_{i,j}-1}{c_{i,j}-1},\] where $ s $ is the number of leaves of $ Q $. 
\end{thm}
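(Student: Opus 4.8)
The plan is to condition on $L^{-1}(n)$, use \Cref{inversion} and \Cref{lower order ideal of size 3} to rephrase ``tangled'' as a statement about where a single distinguished element lands after $n-2$ promotions, and then evaluate the resulting count by following that element down the tree $Q$.

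First I would pin down the position of $L^{-1}(n)$. If $L$ is tangled then $L_{n-2}\notin\call(P)$, so \Cref{lower order ideal of size 3} with $k=2$ says that $\{L_{n-2}^{-1}(1),L_{n-2}^{-1}(2)\}$ is a non-antichain two-element lower order ideal on which $L_{n-2}$ is not order-preserving. Writing $a\lessdot b$ for the covering relation of this ideal, $a$ is then minimal in $P$ and $L_{n-2}(a)=2>1=L_{n-2}(b)$. Applying \Cref{inversion} to the pair $a\lessdot b$ repeatedly, with $\gamma$ running down from $n-2$ to $1$, shows that $a$ lies on no promotion chain through step $n-2$ and that $L_\gamma(a)=n-\gamma$ throughout; in particular $L(a)=n$, so a tangled $L$ has $L^{-1}(n)$ minimal and $\varphi(L^{-1}(n))$ is a leaf of $Q$. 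The same two facts — $a:=L^{-1}(n)$ sits on no promotion chain, and $L_\gamma(a)=n-\gamma$ — hold for \emph{every} $L$ with $L^{-1}(n)$ minimal, since a minimal element can lie on the $\gamma$th promotion chain only if it has label $1$. Combining this with the shape of the ideal furnished by \Cref{lower order ideal of size 3} gives: conditioned on $L^{-1}(n)=a$, the labeling $L$ is tangled if and only if $L_{n-2}^{-1}(1)$ is a parent of $a$ in $P$.

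Next I would pass to the poset $P':=P\setminus\{a\}$, again an $(n-1)$-element inflation of a rooted tree (obtained from $Q$ by shrinking the fiber over $\ell:=\varphi(a)$, or by deleting the leaf $\ell$ if that fiber was a singleton). Because $a$ is minimal and stays off every promotion chain, promotion on $P$ is compatible with promotion on $P'$: one checks that $\partial_{P'}^{\gamma}\bigl(\st(L|_{P'})\bigr)=\st\bigl(L_\gamma|_{P'}\bigr)$ for $0\le\gamma\le n-2$. Since $|P'|=n-1$, \Cref{sorting} makes $\partial_{P'}^{n-2}(\st(L|_{P'}))$ a linear extension of $P'$ whose minimum is exactly $L_{n-2}^{-1}(1)$; so that element is automatically minimal in $P'$, and tangledness amounts to its being, in $P$, a parent of $a$. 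As $L\mapsto\st(L|_{P'})$ is a bijection from $\{L\in\Lambda(P):L^{-1}(n)=a\}$ onto $\Lambda(P')$, the number of tangled $L$ with $L^{-1}(n)=a$ equals the number of $M\in\Lambda(P')$ for which the minimum of the fully sorted extension $\partial_{P'}^{n-2}(M)$ is a parent of $a$ in $P$.

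The heart of the argument is to prove that this last number equals $(n-1)!$ times $\prod_{j=1}^{\omega(\ell)}\frac{b_{\ell,j}-1}{c_{\ell,j}-1}$; summing over all leaves $\ell$ of $Q$ then gives the theorem. I would track $z_\gamma:=L_\gamma^{-1}(n-1-\gamma)$, which starts at $z_0=L^{-1}(n-1)$ — a uniformly random element of $P'$ once $a=L^{-1}(n)$ is fixed — and ends at $z_{n-2}=L_{n-2}^{-1}(1)$; by \Cref{Swapnil lemma}, $z_\gamma$ descends weakly in $P$, moving strictly down precisely at the steps where it lies on the current promotion chain. The claim is that $z$ threads its way down through the fibers over $u_{\ell,\omega(\ell)},\dots,u_{\ell,1}$, and that, after conditioning on its history, at the level of $u_{\ell,j}$ it descends into the subtree containing the fiber over $\ell$, rather than into a sibling subtree under $u_{\ell,j}$, with conditional probability $\frac{b_{\ell,j}-1}{c_{\ell,j}-1}$ — the $-1$'s recording that $a$ is excluded from the otherwise uniform choice — and that the events attached to distinct $j$ are independent, so their probabilities multiply. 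I expect this to be the main obstacle: one must show $z$ really does traverse $Q$ one fiber per ``phase,'' identify the conditional law at each branch vertex as uniform over the available elements lying below it, and prove that the phases decouple. This is most naturally organized as an induction on $|Q|$ that peels off the bottom edge $u_{\ell,0}\lessdot u_{\ell,1}$ and strips the factor $\frac{b_{\ell,1}-1}{c_{\ell,1}-1}$, with degenerate configurations treated separately — for instance when $P$ is a chain a factor takes the indeterminate form $\tfrac00$ and should be read as $1$, reflecting a phase in which the relevant choice is forced.
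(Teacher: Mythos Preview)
Your outline is correct and matches the approach of \cite{DK20} (which this paper cites rather than re-proves, though the same argument is re-derived via \Cref{helper} inside the proof of \Cref{quasi tangled enumeration}): fix $L^{-1}(n)=a$ at a minimal element, pass to $\tilde P=P\setminus\{a\}$ using \Cref{standardization}, and compute the probability that the label~$1$ lands above $a$ after $n-2$ promotions as a product of conditional probabilities $\Prob(E_{\ell,j}\mid E_{\ell,j+1})$ along the path from $\ell$ to the root of $Q$. The step you flag as the main obstacle---that the conditional law at each branch vertex is uniform over $X_j$ and that the phases decouple---is exactly the $d=1$ case of \Cref{probability lemma} (Lemma~3.11 of \cite{DK20}), and your proposed induction on $|Q|$ is a reorganization of that lemma's proof rather than a genuinely different route.
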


Now, define $ M $ to be the set of minimal elements of $ P $. We define subsets $ R,S,T $ of $ M $: Let $ \ell\in M $, and let $ x $ cover $ \ell $.  \begin{enumerate}
	\item Put $ \ell$ in $ R $ if $ \ell $ is the only child of $ x $ and the parent of $ x $ exists and has multiple children.
	\item Put $ \ell$ in $ S $ if $ x $ has precisely two children;
	\item Put $ \ell$ in $ T $ if $ \ell $ is the only child of $ x $, the parent of $ x $ exists, and $ x $ is the only child of its parent.
\end{enumerate}
Note here that $ R $, $ S $, and $ T $ are disjoint but do not partition $ P $. However, it is not difficult to see that $ R $, $ S $, and $ T $ partition the set of minimal elements of $ P $ that, along with their parents, lie in non-antichain lower order ideals of size 3. See \Cref{fig: min elts}.
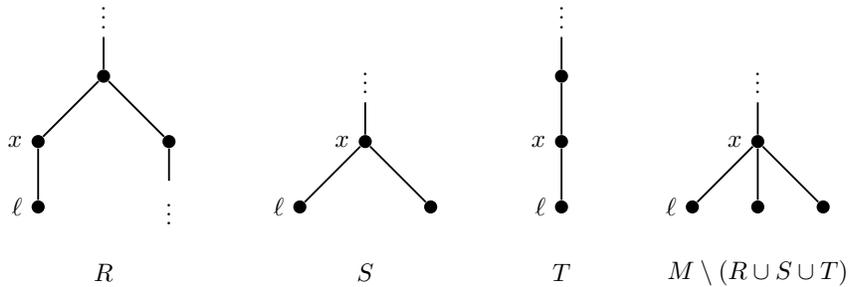
\begin{figure}[h!]
	\resizebox{0.69\textwidth}{!}{\begin{tikzpicture}[roundnode/.style={circle, inner sep=0pt, fill=black, minimum size=2mm}]
			
			\node[roundnode, label=left:{$ \ell $}] (R1) at (0,0) {};
			\node[roundnode, label=left:{$ x $}] (R2) at (0,1) {};
			\node[roundnode, label=left:{}] (R3) at (1,2) {};
			\node[roundnode] (R4) at (2,1) {};
			\node (R5) at (2,0) {\vdots};
			\node (R) at (1,3) {\vdots};
			\node (R6) at (1,-1) {$ R $};
			
			\path (R1) edge [thick] (R2);
			\path (R2) edge [thick] (R3);
			\path (R3) edge [thick] (R4);
			\path (R4) edge [thick] (R5);
			\path (R3) edge [thick] (R);
			
			\node[roundnode, label=left:{$ \ell $}] (S1) at (4,0) {};
			\node[roundnode, label=left:{$ x $}] (S2) at (5,1) {};
			\node[roundnode] (S3) at (6,0) {};
			\node (S) at (5,2) {\vdots};
			\node (S4) at (5,-1) {$ S $};
			
			\path (S1) edge [thick] (S2);
			\path (S2) edge [thick] (S3);
			\path (S2) edge [thick] (S);
			
			\node[roundnode, label=left:{$ \ell $}] (T1) at (8,0) {};
			\node[roundnode, label=left:{$ x $}] (T2) at (8,1) {};
			\node[roundnode, label=left:{}] (T3) at (8,2) {};
			\node (T4) at (8,-1) {$ T $};
			\node (T) at (8,3) {\vdots};
			
			\path (T1) edge [thick] (T2);
			\path (T2) edge [thick] (T3);
			\path (T3)edge [thick] (T);
			
			\node[roundnode, label=left:{$ \ell $}] (U1) at (10,0) {};
			\node[roundnode, label=left:{$ x $}] (U2) at (11,1) {};
			\node[roundnode] (U3) at (12,0) {};
			\node[roundnode] (U4) at (11,0) {};
			\node (U5) at (11,-1) {$ M\setminus(R\cup S\cup T) $};
			\node (U) at (11,2) {\vdots};
			
			\path (U1) edge [thick] (U2);
			\path (U2) edge [thick] (U3);
			\path (U2) edge [thick] (U4);
			\path (U2) edge [thick] (U);

	\end{tikzpicture}}
	\caption{\label{fig: min elts}An illustration of what the elements of $ R, $ $ S, $ and $ T $ ``look like.''}
\end{figure}
\begin{thm}\label{quasi tangled enumeration}
	Let $ (P,\vphi) $ be an inflation of the rooted tree poset $ Q $ with deflated leaves. Let $ M $, $ R $, $ S $, $ T $, and the $ u_{\ell,j} $'s, $ b_{\ell,j} $'s, and $ c_{\ell,j} $'s be as defined above. Then the number of quasi-tangled labelings of $ P $ is \begin{equation*}\resizebox{\textwidth}{!} 
		{
			$(n-1)!\left(2\displaystyle\sum_{\ell\in T}\prod_{j=2}^{\omega(\ell)}\frac{b_{\ell,j}-1}{c_{\ell,j}-1}-\frac{1}{n-1}\sum_{\ell\in T}\prod_{j=2}^{\omega(\ell)}\frac{b_{\ell,j}-2}{c_{\ell,j}-2}+2\sum_{\ell\in R}\prod_{j=2}^{\omega(\ell)}\frac{b_{\ell,j}-1}{c_{\ell,j}-1}+\sum_{\ell\in S}\prod_{j=2}^{\omega(\ell)}\frac{(b_{\ell,j}-1)(b_{\ell,j}-2)}{(c_{\ell,j}-1)(c_{\ell,j}-2)}\right).$
		}
	\end{equation*}
\end{thm}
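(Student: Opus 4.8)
A labeling $L$ has sorting time at least $n-2$ exactly when $L_{n-3}\notin\call(P)$, and by \Cref{lower order ideal of size 3} (with $k=3$, together with the trivial converse) this happens precisely when $Y:=\{L_{n-3}\inverse(1),L_{n-3}\inverse(2),L_{n-3}\inverse(3)\}$ is a non-antichain lower order ideal on which $L_{n-3}$ is not order-preserving; likewise, applying \Cref{lower order ideal of size 3} with $k=2$ to $L_{n-2}$, the labeling $L$ is \emph{tangled} iff $L_{n-2}\inverse(2)\lessdot L_{n-2}\inverse(1)$. Thus the quasi-tangled labelings are exactly those with the first property but not the second, and by \Cref{L^-1(n) position} the shape of $Y$ is one of the three listed lower order ideals; reading off \Cref{fig: min elts}, the minimal element ``responsible'' for the obstruction — together with its parent and, as the case requires, the parent's parent or the parent's other child — belongs to $T$ (the $3$-chain shape), to $R$ (the edge-plus-isolated-point shape whose isolated point is a minimal sibling), or to $S$ (the two-leaves-and-root shape). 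Repeatedly applying \Cref{inversion} exactly as in the proof of \Cref{L^-1(n) position} pins $L\inverse(n)$ and/or $L\inverse(n-1)$ to prescribed vertices on the path in $Q$ from the responsible leaf to the root. Finally, since the label at a minimal element drops by $1$ at each promotion until it reaches $1$, conditioning on $L\inverse(n)=\ell$ (resp. $L\inverse(n-1)=\ell$) with $\ell$ minimal already forces $L_{n-3}\inverse(3)=\ell$ (resp. $L_{n-3}\inverse(2)=\ell$), so each such conditioning isolates $(n-1)!$ labelings, which is where the global factor $(n-1)!$ comes from.

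\textbf{The local product.} For a fixed responsible leaf $\ell$ of $Q$, condition the random labeling so that the largest label(s) occupy the prescribed vertices. The core claim — proved, as in \Cref{tangled enumeration}, by induction on the number of promotions — is that after this conditioning the event ``$L$ becomes quasi-tangled through $\ell$'' decomposes into independent events indexed by the interior branch vertices $u_{\ell,2},\dots,u_{\ell,\omega(\ell)}$ of the $Q$-path: at $u_{\ell,j}$ the tracked label must be pulled down toward $\varphi\inverse(\ell)$ rather than swept further up the current promotion chain. Counting which residual label is smallest among those lying below $u_{\ell,j}$ shows this event has conditional probability $\tfrac{b_{\ell,j}-1}{c_{\ell,j}-1}$ when one label is being tracked (the $R$- and $T$-cases) and $\tfrac{(b_{\ell,j}-1)(b_{\ell,j}-2)}{(c_{\ell,j}-1)(c_{\ell,j}-2)}$ when two are (the $S$-case, where the two smallest residual labels below the relevant branch vertex must both fall on the $\ell$-side, so that the ``$\vee$''-shaped $Y$ is realized). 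Summing over the two prescribed placements — the label-$3$ element of $Y$ occupying either of the two slots available to it in the non-chain $Y$, equivalently the responsible element being $L\inverse(n)$ or $L\inverse(n-1)$ — gives the factor $2$ on the $R$- and $T$-sums, while the $S$-sum already ranges over both leaves below each relevant branch vertex and so needs no such factor. This yields the three bare sums in the statement.

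\textbf{Correction term and overlaps.} It remains to subtract the labelings the $T$-count collects but which are in fact tangled: these are exactly the labelings over the relevant $3$-chain for which $L\inverse(n)$ and $L\inverse(n-1)$ simultaneously occupy its bottom two vertices, forcing $L_{n-2}\inverse(2)\lessdot L_{n-2}\inverse(1)$. Fixing both top labels leaves $(n-2)!=(n-1)!/(n-1)$ completions, and because two \emph{consecutive} top labels are now frozen at consecutive path vertices, the per-branch-vertex probability shifts to $\tfrac{b_{\ell,j}-2}{c_{\ell,j}-2}$, producing precisely $-\tfrac{1}{n-1}\sum_{\ell\in T}\prod_{j=2}^{\omega(\ell)}\tfrac{b_{\ell,j}-2}{c_{\ell,j}-2}$. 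No analogous correction is needed for $R$ (the bottom two vertices of the relevant ideal are incomparable, so placing $L\inverse(n-1)$ there creates no tangling inversion) or for $S$ (the would-be tangled configuration has $L_{n-2}\inverse(1)$ below $L_{n-2}\inverse(2)$, which is the harmless orientation). One then checks that no quasi-tangled labeling is counted twice across the three families: each has a single critical ideal $Y$ of a single shape, is attributed to whichever of $L\inverse(n),L\inverse(n-1)$ the inversion-tracing singles out, and that leaf lies in exactly one of the disjoint sets $R,S,T$. Summing the three (corrected) contributions and extracting $(n-1)!$ gives the formula.

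\textbf{Main obstacle.} I expect the crux to be the local analysis of the second paragraph: showing that, after conditioning, the ``pulled down in the right direction'' events at distinct branch vertices are genuinely independent and carry the stated probabilities. This requires controlling precisely how the promotion chain of $L_\gamma$ meets the tracked label(s) as $\gamma$ grows — in particular ruling out accidental absorptions of a tracked element into a promotion chain, which would break the clean backward-tracing — and is the same delicate mechanism that drives \Cref{tangled enumeration}, here made harder by having two labels in flight simultaneously. A secondary but nontrivial difficulty is the bookkeeping of the third paragraph: verifying that $R,S,T$, defined by purely local structure, really do exhaust the minimal elements that can be responsible, and that only the $T$-family requires subtracting tangled labelings while the attribution of each quasi-tangled labeling to a unique leaf is unambiguous.
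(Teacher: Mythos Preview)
Your overall strategy---condition on the positions of $L^{-1}(n)$ and $L^{-1}(n-1)$, reduce to $\tilde P$, and then use the Probability Lemma branch vertex by branch vertex---is the same as the paper's. However, your bookkeeping in the second and third paragraphs is incorrect in several places, and the answer comes out right only because the errors happen to compensate.

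First, you entirely omit the case in \Cref{L^-1(n) position} where $L^{-1}(n)$ \emph{covers} a minimal element (with the minimal element necessarily in $T$). In the paper this is a genuine third case, contributing the double-product $\sum_{\ell\in T}\prod_{j\ge 2}\frac{(b_{\ell,j}-1)(b_{\ell,j}-2)}{(c_{\ell,j}-1)(c_{\ell,j}-2)}$; that term cancels against an equal double-product subtracted in the ``$L^{-1}(n)$ minimal'' case (where it corrects the inclusion--exclusion over ``label $1$ or label $2$ lies above $\ell$''). Your claimed source of the factor $2$---``responsible element is $L^{-1}(n)$ or $L^{-1}(n-1)$, each contributing one single-product''---does not match what actually happens: when $L^{-1}(n)=\ell$ is minimal, the count of labelings with $L_{n-3}^{-1}(3)<_P L_{n-3}^{-1}(1)$ \emph{or} $L_{n-3}^{-1}(3)<_P L_{n-3}^{-1}(2)$ already carries a $2$ from that ``or'', minus a double-product; adding the $L^{-1}(n-1)$-minimal case and the covers-a-minimal case and then subtracting the tangled labelings is what finally leaves $2\sum_T+2\sum_R$.

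Second, your interpretation of the $-\frac{1}{n-1}$ term is wrong. The labelings with $L^{-1}(n-1)=\ell\in T$ and $L^{-1}(n)$ the parent of $\ell$ (and $L_{n-3}^{-1}(1)$ the grandparent) are \emph{not} tangled: one more promotion sends label $1$ to $\ell$ and label $2$ to its parent, so $L_{n-2}^{-1}(1)\lessdot L_{n-2}^{-1}(2)$. In the paper this term arises purely as an overlap correction between Case~1 ($L^{-1}(n-1)$ minimal) and Case~3 ($L^{-1}(n)$ covers a minimal element), not as a tangled subtraction. The actual tangled subtraction is $(n-1)!\sum_{\ell\in R\cup T}\prod_{j\ge 2}\frac{b_{\ell,j}-1}{c_{\ell,j}-1}$, so your assertion that ``no analogous correction is needed for $R$'' is also false: there are plenty of tangled labelings with $L^{-1}(n)\in R$, and they must be removed. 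You need to redo the case split carefully (three cases, with explicit handling of the $S$-subcase exclusion in Case~1, the inclusion--exclusion in Case~2, and the $L(\ell)\ne n-1$ exclusion in Case~3) and then subtract the full tangled count.
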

\begin{corollary}
	Let $ P $ be a poset with $ n $ elements. Suppose that $ P $ has a unique minimal element and that this minimal element has exactly one parent. Then $ P $ has $ 2(n-1)!-(n-2)! $ quasi-tangled labelings. 
\end{corollary}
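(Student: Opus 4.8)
The plan is to realize $P$ as an inflation of a two-element chain with deflated leaves and then specialize the formula of \Cref{quasi tangled enumeration}. Write $\ell$ for the unique minimal element of $P$ and $x$ for its unique parent. The first step is to observe that $P\setminus\{\ell\}$ again has a unique minimal element, namely $x$: if $y$ is minimal in $P\setminus\{\ell\}$, then the only element of $P$ that can lie strictly below $y$ is $\ell$, so either $y$ is minimal in $P$ (hence $y=\ell$, excluded) or $y$ covers $\ell$ (hence $y=x$). Granting this, define $\varphi\colon P\to Q$, where $Q$ is the two-element chain $0\lessdot 1$, by $\varphi(\ell)=0$ and $\varphi(y)=1$ for $y\neq\ell$. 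Condition (1) in the definition of inflation holds since $\varphi^{-1}(0)=\{\ell\}$ and $\varphi^{-1}(1)=P\setminus\{\ell\}$ each have a unique minimal element; condition (2) holds because the only cross-fiber comparisons involve $\ell$, which lies below everything; the unique leaf $0$ of $Q$ satisfies $|\varphi^{-1}(0)|=1$; and $Q$ with its leaf removed is a single vertex, hence reduced. Thus $(P,\varphi)$ meets the hypotheses of \Cref{quasi tangled enumeration}. (The cases $n\le 2$ are trivial: the only poset that arises is the two-element chain, whose sole quasi-tangled labeling is its linear extension, in agreement with $2(n-1)!-(n-2)!=1$; so assume $n\ge 3$ from now on.)

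Next I would extract the data feeding the theorem. The set $M$ of minimal elements of $P$ equals $\{\ell\}$, corresponding to the unique leaf $0$ of $Q$, and the path from $0$ to the root of $Q$ is $0\lessdot 1$, so $\omega(\ell)=1$; hence every product $\prod_{j=2}^{\omega(\ell)}$ occurring in \Cref{quasi tangled enumeration} is empty and equal to $1$. It then remains to place $\ell$ in the class $T$. Since $\ell$ is the unique minimal element, $x$ has exactly one child: a second child $w$ would satisfy $\ell\le_P w<_P x$, and $w\neq\ell$ would contradict $\ell\lessdot_P x$; in particular $\ell\notin S$. Moreover no parent of $x$ has a second child: if $z$ covers both $x$ and some $w\neq x$, then $w$ is incomparable to $x$ (otherwise $x<_P w<_P z$ or $w<_P x<_P z$, each contradicting a covering relation of $z$), yet along any saturated chain from $\ell$ up to $w$ the element covering $\ell$ is a parent of $\ell$ and hence equals $x$, giving $x<_P w$, a contradiction; so $\ell\notin R$. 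Finally, for $n\ge 3$ the element $x$ is not maximal (being the minimal element of $P\setminus\{\ell\}$, which has at least two elements), so $x$ has a parent, and that parent has $x$ as its only child by the previous sentence. Hence $\ell$ satisfies the defining conditions of $T$, so that $T=\{\ell\}$ and $R=S=\emptyset$.

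Substituting $M=T=\{\ell\}$, $R=S=\emptyset$, and the empty products into \Cref{quasi tangled enumeration} annihilates the last two summands and reduces the first two to constants, yielding
\[
(n-1)!\left(2-\frac{1}{n-1}\right)=2(n-1)!-(n-2)!
\]
quasi-tangled labelings. The step I expect to be the main obstacle is verifying that $P$ genuinely is an inflated rooted tree with deflated leaves; this rests on the elementary but essential observation that deleting $\ell$ leaves a poset with a single minimal element, without which \Cref{quasi tangled enumeration} would not apply. A secondary point requiring care is pinning down that $\ell\in T$ rather than $R$, $S$, or none of these (the formula carries no term for minimal elements outside $R\cup S\cup T$), which is exactly where one uses that a unique minimal element forces $x$ to have one child and, in turn, forces every parent of $x$ to have $x$ as its only child.
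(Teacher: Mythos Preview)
Your proof is correct and follows the approach the paper intends: the corollary is stated immediately after \Cref{quasi tangled enumeration} without a separate proof, so the paper's argument is simply to specialize the theorem, which is exactly what you do. Your verification that any such $P$ is an inflation of the two-chain (via the observation that $P\setminus\{\ell\}$ has $x$ as its unique minimal element), that $\omega(\ell)=1$ forces all products to be empty, and that $\ell\in T$ with $R=S=\emptyset$, are precisely the details needed to extract the formula $2(n-1)!-(n-2)!$.
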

\subsection{Computing Probabilities}

In this section, we compute several probabilities that will help us in the proof of \Cref{quasi tangled enumeration}. Importantly, in \Cref{probability lemma}, we generalize Lemma 3.11 of \cite{DK20}, which tells us that the probability of a certain label ending up in some subtree of our inflated rooted tree poset after a certain number of promotions is proportional to the size of the subtree.
\begin{lemma}[\cite{DK20}, Lemma 3.9]\label{promotion dependence}
	Let $ P $ be an $ N $-element poset, and let $ X=\{y\in P\;|\;y<_Px\} $ for some $ x\in P $. Suppose every element of $ P $ that is comparable with some element of $ X $ is also comparable with $ x $. If $ L $ and $ \tilde{L} $ are labelings of $ P $ that agree on $ P\setminus X $, then for every $ \gamma\geq1 $, the labelings $ L_\gamma $ and $ \tilde{L}_\gamma $ also agree on $ P\setminus X $.
\end{lemma}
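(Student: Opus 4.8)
The plan is to reduce the claim to the single-step case $\gamma=1$ and then induct. If I can show that whenever two labelings $L,\tilde L$ of $P$ agree on $P\setminus X$ their images $\partial(L),\partial(\tilde L)$ also agree on $P\setminus X$, then applying this to the pair $(\partial^{\gamma-1}(L),\partial^{\gamma-1}(\tilde L))$ — which again consists of labelings of the same poset $P$ agreeing on $P\setminus X$ by the inductive hypothesis — yields the full statement. So everything reduces to the step $\gamma=1$.

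Before treating that step I would record two structural facts. First, $X=\{y\;|\;y<_Px\}$ is a lower order ideal, so $P\setminus X$ is an upper order ideal; in particular, once an element of a promotion chain lies in $P\setminus X$, every later (hence larger) element of that chain lies in $P\setminus X$ as well. Second — and this is where the hypothesis enters — for any $v\in X$ the set of elements lying above $v$ but outside $X$ is exactly $U:=\{z\;|\;z\geq_Px\}$: any such element is comparable with $v\in X$, hence comparable with $x$ by hypothesis, and being outside $X$ it must satisfy $z\geq_Px$; conversely every element of $U$ lies above $v$ since $v<_Px\leq_Pz$. Crucially, this set $U$ is the same regardless of which $v\in X$ we picked, and $U\subseteq P\setminus X$.

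For the single step I would analyze the promotion chain $v_1<_P\cdots<_Pv_m$ of $L$. Since $L$ and $\tilde L$ use the same labels on $P\setminus X$, the label $1$ lies in $X$ for $L$ if and only if it does for $\tilde L$, so both chains begin in $X$ or both begin in $P\setminus X$. If $v_1=L\inverse(1)\in P\setminus X$, the upper-order-ideal property forces the entire chain into $P\setminus X$, and each successor is the minimum-labeled element of a set contained in $P\setminus X$; as these labels agree, the two chains are literally identical. If $v_1\in X$, the chain traverses an initial segment inside $X$ and then exits, and the key claim is that it must exit through the single ``gateway'' $w$, namely the minimum-labeled element of $U$. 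Indeed, at the last $X$-vertex $v_p$, its $L$-successor is the globally minimum-labeled element above $v_p$; the elements above $v_p$ lying outside $X$ are exactly $U$, so if the successor lies outside $X$ it can only be the minimum-labeled element of $U$, which is $w$. Because $U\subseteq P\setminus X$ and $L,\tilde L$ agree there, $w$ is the same element for both labelings, and from $w$ onward the two chains coincide, since the continuation is governed entirely by labels on the up-set $\{z\;|\;z>_Pw\}\subseteq U$.

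With the restrictions of the two chains to $P\setminus X$ shown to coincide, the conclusion is immediate from the definition of $\partial$: an element $z\in P\setminus X$ outside the chain receives new label $L(z)-1=\tilde L(z)-1$, while a chain element $z\in P\setminus X$ receives the label of its chain-successor (which lies in the common suffix, where the labelings agree) minus $1$, or $N$ if $z$ is the top of the chain, the same maximal element for both. The main obstacle is precisely the case where the chains start inside $X$: there $L$ and $\tilde L$ may genuinely differ, so the chains cannot be argued equal outright, and the whole argument hinges on the observation that the hypothesis collapses every possible exit route from $X$ to the single gateway $w$, after which the dynamics on $P\setminus X$ are insensitive to whatever happened inside $X$.
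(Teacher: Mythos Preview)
Your argument is correct. The paper does not actually supply its own proof of this lemma; it is quoted verbatim from \cite{DK20} (their Lemma~3.9) and used as a black box, so there is no in-paper argument to compare against. Your induction on $\gamma$ together with the ``single gateway'' analysis is exactly the natural approach: the hypothesis guarantees that for every $v\in X$ the set of elements above $v$ lying outside $X$ is the fixed set $U=\{z\;|\;z\geq_Px\}$, so both promotion chains (which must leave $X$, as no element of $X$ is maximal) exit precisely at the minimum-$L$-labeled element of $U$, which is the same for $L$ and $\tilde L$ since $U\subseteq P\setminus X$. From that point on the chains coincide, and the piecewise definition of $\partial$ then forces $\partial(L)$ and $\partial(\tilde L)$ to agree on $P\setminus X$. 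One small point worth making explicit in a polished write-up is that $L$ and $\tilde L$ have the same label \emph{set} on $X$ (as the complement of the common label set on $P\setminus X$), which is what justifies the claim that $L^{-1}(1)\in X$ iff $\tilde L^{-1}(1)\in X$; you use this implicitly and it is immediate, but stating it removes any ambiguity.
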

\begin{lemma}[\cite{DK20}, Lemma 3.10]\label{label distribution}
	Let $ P $, $ x $, and $ X $ be defined as in \Cref{promotion dependence}. If $ L $ is a labeling of $ P $ and $ \gamma\geq0 $, then the set $ L_\gamma(X) $ depends only on the set $ L(X) $ and the restriction $ L_{P\setminus X} $; it does not depend on the way in which the labels in $ L(X) $ are distributed among the elements of $ X $. 
\end{lemma}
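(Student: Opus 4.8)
The natural approach is induction on $\gamma$. The base case $\gamma=0$ is immediate since $L_0(X)=L(X)$. For the inductive step I will show that $L_{\gamma+1}(X)$ is determined by $L_\gamma(X)$ together with the restriction $L_\gamma|_{P\setminus X}$; combining this with the inductive hypothesis (that $L_\gamma(X)$ is determined by $L(X)$ and $L|_{P\setminus X}$) and with \Cref{promotion dependence} (which already gives that $L_\gamma|_{P\setminus X}$ is determined by $L|_{P\setminus X}$) then closes the induction.

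First I would pin down the order structure. The set $X=\{y:y<_P x\}$ is downward closed, hence a lower order ideal, and $x\notin X$. By the hypothesis carried over from \Cref{promotion dependence}, every element comparable with an element of $X$ is comparable with $x$; together with $X$ being a lower order ideal, this shows that for each $y\in X$ the up-set $\{u:u>_P y\}$ is the disjoint union of $\{u\in X:u>_P y\}$, the singleton $\{x\}$, and the strict up-set $\{w:w>_P x\}$. Since a promotion chain is totally ordered and $X$ is a lower order ideal, the $\gamma$-th promotion chain $v_1<_P\cdots<_P v_m$ meets $X$ in an initial segment $v_1,\dots,v_j$: here $j=0$ exactly when $L_\gamma\inverse(1)\notin X$, i.e.\ $1\notin L_\gamma(X)$, and once the chain leaves $X$ it cannot re-enter. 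Moreover $v_m$ is maximal in $P$ while no element of $X$ is, so $1\le j<m$ whenever $j\ge 1$, and in that case $v_{j+1}$ is defined and lies in $P\setminus X$.

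Next I would read off $L_{\gamma+1}(X)$ from the definition of $\partial$. If $j=0$ the chain avoids $X$, every label in $X$ drops by one, and $L_{\gamma+1}(X)=\{a-1:a\in L_\gamma(X)\}$. If $j\ge 1$, the chain elements $v_1,\dots,v_j$ inside $X$ have their labels cycled while every other element of $X$ drops by one; since $v_1=L_\gamma\inverse(1)$ and the interior labels $L_\gamma(v_2),\dots,L_\gamma(v_j)$ cancel between the labels leaving $X$ and the labels re-entering, this collapses to $L_{\gamma+1}(X)=\{a-1:a\in(L_\gamma(X)\setminus\{1\})\cup\{L_\gamma(v_{j+1})\}\}$ (each label being decremented is at least $2$, so the modular reduction in the definition of $\partial$ does not intervene). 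The only term here not visibly controlled by $L_\gamma(X)$ is $L_\gamma(v_{j+1})$, and identifying it is the heart of the matter: because $v_{j+1}>_P v_j$ and $v_{j+1}\notin X$, the disjoint-union description forces $v_{j+1}\in\{x\}\cup\{w:w>_P x\}$; and since this whole set lies in $\{u:u>_P v_j\}$ while $v_{j+1}$ is the least-label element of $\{u:u>_P v_j\}$, the element $v_{j+1}$ is simply the least-$L_\gamma$-label element of $\{x\}\cup\{w:w>_P x\}$ — namely $x$ itself when $x$ is maximal or when its label beats that of its $L_\gamma$-successor, and the $L_\gamma$-successor of $x$ otherwise. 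In particular $v_{j+1}$, and hence $L_\gamma(v_{j+1})$, depends only on $L_\gamma|_{P\setminus X}$.

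I expect the main obstacle to be exactly this last identification of $v_{j+1}$, together with the bookkeeping that reduces the cycled chain inside $X$ to a single label swap; both steps rely on $X$ being a lower order ideal that hangs entirely beneath the single element $x$, which is precisely what the hypothesis of \Cref{promotion dependence} guarantees. Once these are in hand, $L_{\gamma+1}(X)$ is a function of $L_\gamma(X)$ and $L_\gamma|_{P\setminus X}$ in both cases $j=0$ and $j\ge 1$, and the induction goes through.
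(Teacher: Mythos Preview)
Your argument is correct. The key observations---that the promotion chain meets $X$ in an initial segment, that the net effect on $L_\gamma(X)$ is to remove $1$ and insert $L_\gamma(v_{j+1})$ before decrementing, and that $v_{j+1}$ is forced to be the least-$L_\gamma$-label element of $\{x\}\cup\{w:w>_Px\}$ (hence determined by $L_\gamma|_{P\setminus X}$ alone)---are exactly right, and the induction closes cleanly via \Cref{promotion dependence}.

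Note that the paper itself does not prove this lemma: it is quoted from \cite{DK20} (Lemma~3.10) without argument. Your proof is the natural one and matches the standard approach.
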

For the rest of this subsection, let $ P $, $ x $, and $ X $ be defined as in \Cref{promotion dependence}.  Suppose there is a partition of $ X $ into disjoint subsets $ A $ and $ B $ such that no element of $ A $ is comparable to an element of $ B $. Note that both $ A $ and $ B $ are lower order ideals of $ P $. 

\begin{definition}\label{pulls down}
	Let $ L $ be a labeling of $ P $. Suppose that $ k\in[N-1] $ and $ m\in[N] $ are such that $ m+k\leq N $, $ L_k\inverse(m)\in X $, and $ L\inverse(m+k)\not\in X $. We say that $ \gamma $ \emph{pulls down} $ m+k $ if $ \gamma<k $ is the largest index such that $ L_{\gamma}\inverse(m+k-\gamma)\not\in X $. We see immediately from the definition that $ \gamma $ is the unique such index pulling down $ m+k $ and that $ \gamma $ pulls down exactly one label.
\end{definition}

With notation as above, note that in order to determine whether $ L_k\inverse(m)\in A $, it suffices to determine whether $ L_{\gamma}\inverse(1)\in A $: because $ L_{\gamma}\inverse(m+k-\gamma)\not\in X $ and $ L_{\gamma+1}\inverse(m+k-\gamma-1)\in X $, we have that $ L_{\gamma}\inverse(m+k-\gamma) $ is in the $ \gamma $th promotion chain. Hence, $ L_{\gamma+1}\inverse(m+k-\gamma-1)\in A $ if and only if $ L_{\gamma}\inverse(1)\in A $ (recall $ A $ and $ B $ are disjoint lower order ideals). 

To each such $ m+k $, one can associate a decreasing sequence of indices $ \gamma_0,\gamma_1,\ldots,\gamma_r $ whose values depend only on $ L|_{P\setminus X} $ as follows. Let $ \gamma_0 $ pull down $ m+k $. By \Cref{label distribution}, the value $ \gamma_0 $ depends only on $ L|_{P\setminus X} $. If $ L\inverse(\gamma_0+1)\in X $, we are done. Otherwise, let $ \gamma_1 $ pull down $ \gamma_0+1 $, where we let $ \gamma_0 $ take the role of $ k $ and $ 1 $ the role of $ m $ (note that $ \gamma_1<\gamma_0 $). If $ L\inverse(\gamma_1+1)\in X $, we are done; otherwise, let $ \gamma_2 $ pull down $ \gamma_1+1 $. This process can be continued, where $ \gamma_{i+1} $ pulls down $ \gamma_i+1 $. Since $ 0\leq\gamma_{i+1}<\gamma_i $, this process eventually terminates, yielding a decreasing sequence $ \gamma_0,\ldots,\gamma_r $. By \Cref{label distribution}, we see that the values $ \gamma_0,\ldots,\gamma_r $ depend only on the set $ L(X) $ and $ L|_{P\setminus X} $, not on the way in which the labels in $ L(X) $ are distributed. 

Let $ a,b\in[N] $ and $ k\in[N-1] $ be such that $ a+k,b+k\leq N $, $ L_k\inverse(a),L_k\inverse(b)\in X $, and $ L\inverse(a+k),L\inverse(b+k)\not\in X $. Suppose $ a\neq b $. Let $ \alpha_0,\ldots,\alpha_r $ and $ \beta_0,\ldots,\beta_s $ be the sequences associated to $ a+k $ and $ b+k $, respectively. We claim that $ \{\alpha_0,\ldots,\alpha_r\}\cap\{\beta_0,\ldots,\beta_s\}=\emptyset $. Assume the contrary. If $ \alpha_i=\beta_j $ for some $ 0\leq i\leq r $ and $ 0\leq j\leq s $, then it follows from \Cref{pulls down} that $ \alpha_r=\beta_s $. 

Without loss of generality, suppose $ r\leq s $. If $ r<s $, then by definition, $ \alpha_r=\beta_s $ pulls down $ \alpha_{r-1}+1=\beta_{s-1}+1 $, $ \alpha_{r-1}=\beta_{s-1} $ pulls down $ \alpha_{r-2}+1=\beta_{s-2}+1 $, etc., until $ \alpha_0=\beta_{s-r} $ pulls down $ a+k=\beta_{s-r-1}+1 $. It follows that $ a+k-1=\beta_{s-r-1} $. Note that $ a+k-1\geq k $, but $ \beta_{s-r-1}\leq\beta_0<k $. This is a contradiction. The case where $ s>r $ is identical. If $ r=s $, then $ \alpha_0=\beta_0 $, and it follows that $ a+k=b+k $, contradicting our assumption that $ a\neq b $. 

The following is a generalization of Lemma 3.11 in \cite{DK20}. For $ d=1 $, the lemmas are exactly the same. This lemma will be applied repeatedly in the proof of \Cref{quasi tangled enumeration}. Informally, it states that given a list of $ d $ labels whose corresponding elements are in $ X $ after $ k $ promotions, the probability that all of these labels are in $ A $ is proportional to $ (|A|)!/(|A|-d-1)! $. 

\begin{lemma}[Probability Lemma]\label{probability lemma}
	Let $ P $, $ x $, and $ X $ be defined in \Cref{promotion dependence}, and let $ A $ and $ B $ be defined as above. Let $ k\in[N-1] $, and let $ n_1,\ldots,n_d\in[N] $ be such that $ n_i+k\leq N $ for all $ 1\leq i\leq d $. Fix an injective map $ M:P\setminus X\to[N] $ such that every labeling $ L $ extending $ M $ has the property that $ L_k\inverse(n_1),\ldots,L_k\inverse(n_d)\in X $. If such an $ L $ is chosen uniformly at random among all such extensions of $ M $, then the probability that $ L_k\inverse(n_1),\ldots,L_k\inverse(n_d)\in A $ is \[\frac{|A|(|A|-1)\cdots(|A|-d)}{|X|(|X|-1)\cdots(|X|-d)}.\] 
\end{lemma}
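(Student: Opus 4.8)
The plan is to reduce the statement to an elementary urn computation for the uniformly random restriction $L|_X$. Write $V=[N]\setminus M(P\setminus X)$, so that $L(X)=V$ for every extension $L$ of $M$; choosing $L$ uniformly among extensions of $M$ is the same as choosing the bijection $L\inverse|_V\colon V\to X$ uniformly at random. The heart of the matter is to show that for each $i\in[d]$ there is a label $\ell_i\in V$, \emph{depending only on $M$} and not on the chosen extension, such that every extension $L$ satisfies $L_k\inverse(n_i)\in A$ if and only if $L\inverse(\ell_i)\in A$, together with the fact that $\ell_1,\dots,\ell_d$ are pairwise distinct (we use here that $n_1,\dots,n_d$ are distinct, as is implicit in the statement). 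Granting these two facts, the event ``$L_k\inverse(n_1),\dots,L_k\inverse(n_d)\in A$'' coincides with the event that the uniformly random bijection $L\inverse|_V$ sends the distinct labels $\ell_1,\dots,\ell_d$ into $A$, whose probability is the asserted product of descending ratios.

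To build $\ell_i$, fix $i$ and write $m=n_i$. If $L\inverse(m+k)\in X$ --- a condition which, by \Cref{label distribution}, depends only on $V$ and hence only on $M$ --- set $\ell_i=m+k$. Otherwise, form the strictly decreasing sequence $\gamma_0>\gamma_1>\dots>\gamma_r$ of indices associated with $m+k$ exactly as in the discussion following \Cref{pulls down}: $\gamma_0$ pulls down $m+k$, and for each $j$ with $L\inverse(\gamma_j+1)\notin X$ one lets $\gamma_{j+1}$ pull down $\gamma_j+1$. At every stage one has $L_{\gamma_j}\inverse(1)\in X$ --- this is part of what the ``pulls down'' step producing $\gamma_j$ yields --- so whenever $\gamma_j=0$ the terminating condition $L\inverse(\gamma_j+1)\in X$ automatically holds; thus each step of the construction is legitimate and the process ends with $L\inverse(\gamma_r+1)\in X$. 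By \Cref{label distribution} all the $\gamma_j$ depend only on $M$, and we put $\ell_i=\gamma_r+1\in V$.

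The required equivalence is obtained by splicing together two observations, both using that $A$ and $B$ are \emph{incomparable} lower order ideals of $P$ contained in $X$. The first is a tracking step: if at some stage of promotion a label $c\ge 2$ is carried by an element of $X$, then at the next stage $c-1$ is carried by an element of $X$ lying in the same block $A$ or $B$; indeed, the carrier is either unchanged, or it lies on the promotion chain, where it can be neither the bottom (its label exceeds $1$) nor the top (the top is maximal in $P$, hence not in $X$), so $c-1$ passes to the carrier's predecessor on the chain, an element smaller in $P$ and therefore in $X$ and in the same block. The second is the ``pulls down'' equivalence recorded in the discussion following \Cref{pulls down}: if $\gamma$ pulls down $m'+k'$, then $L_{\gamma+1}\inverse(m'+k'-\gamma-1)\in A$ if and only if $L_\gamma\inverse(1)\in A$. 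Iterating the tracking step to carry a label forward from one ``pulls down'' index to the next, and inserting the second observation at the indices $\gamma_0,\dots,\gamma_r$, telescopes the chain $L_k\inverse(m)\in A\iff L_{\gamma_0}\inverse(1)\in A\iff\cdots\iff L_{\gamma_r}\inverse(1)\in A\iff L\inverse(\gamma_r+1)\in A$; the degenerate case $L\inverse(m+k)\in X$ is just the tracking step applied from stage $0$ through stage $k$.

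Finally, distinctness: if $\ell_i,\ell_j$ both come from nonempty sequences, the disjointness of the associated index sequences (established in the discussion following \Cref{pulls down}) forces their last terms, hence $\ell_i$ and $\ell_j$, to differ; if both are degenerate, $\ell_i=n_i+k\ne n_j+k=\ell_j$; and a degenerate $\ell_i=n_i+k>k$ cannot equal a non-degenerate $\ell_j=\gamma_r+1\le k$. Since $L\inverse|_V$ is a uniformly random bijection onto $X$ and $\ell_1,\dots,\ell_d$ are $d$ distinct elements of $V$, the probability that $L\inverse(\ell_1),\dots,L\inverse(\ell_d)$ all lie in $A$ is computed by a standard urn count, finishing the proof. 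I expect the main obstacle to be the third paragraph: turning the informal ``it suffices to determine whether $L_\gamma\inverse(1)\in A$'' of the preceding discussion into an honest chain of equivalences requires the tracking step together with the careful check --- guaranteed by the hypotheses $m\ge 1$, $k\le N-1$, and $n_i+k\le N$ --- that a tracked label stays at least $2$ until the final stage, so that its carrier is never the bottom of a promotion chain, while its membership in $X$ keeps it off the top; these bookkeeping details are the technical core of the argument.
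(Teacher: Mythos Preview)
Your proof is correct and follows essentially the same approach as the paper's: both reduce to finding, for each $n_i$, a label $\ell_i\in L(X)$ depending only on $M$ with $L_k\inverse(n_i)\in A\iff L\inverse(\ell_i)\in A$, establish that the $\ell_i$ are distinct via the disjointness of the associated $\gamma$-sequences and the inequality $\gamma_r+1\le k<n_s+k$, and finish with the urn count. The only difference is expository: you make the ``tracking step'' (carrying a label $\ge2$ forward through promotions within $X$) explicit, whereas the paper invokes \Cref{Swapnil lemma} and the discussion preceding the lemma to absorb this into the pulls-down equivalences.
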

\begin{proof}
	Suppose that $ n_1<n_2<\cdots<n_d $, and let $ n_{i_1}<\cdots<n_{i_t} $ be the subset of labels such that $ L\inverse(n_{i}+k)\not\in X $. For all $ s\not\in\{i_1,\ldots,i_t\} $, because $ L\inverse(n_s+k)\in X $, \Cref{Swapnil lemma} gives that $ L_k\inverse(n_s)\in A $ if and only if $ L\inverse(n_s+k)\in A $.
	
	Now, by our discussion above, to each $ n_{i_j} $ we may associate a decreasing sequence of $ \gamma^j $'s given by $ \gamma^j_0>\cdots>\gamma^j_{r(j)} $. Note that by \Cref{label distribution}, the set of $ \gamma^j $'s depends only on $ M $, not on how the labels in $ L(X) $ are distributed. Recall that the sets $ \{\gamma^j_0,\ldots,\gamma^j_{r(j)}\} $ are pairwise disjoint. Importantly, we have that the $ \gamma^j_{r(j)} $'s are all distinct. 
	
	Fix some $ n_{j} $ for $ j\in\{i_1,\ldots,i_t\} $. In this and the next paragraph, denote the associated sequence by $ \gamma_0,\ldots,\gamma_r $. We claim that the probability that $ L_k\inverse(n_{j})\in A $ is equal to the probability that $ L_{\gamma_{r}}\inverse(1)\in A $. To see why this is true, recall that $ \gamma_0 $ pulls down $ n_{j}+k $. In the discussion above, we showed that $ L_k\inverse(n_{j})\in A $ if and only if $ L_{\gamma_{0}}\inverse(1)\in A $. Now, $ \gamma_1 $ pulls down $ \gamma_0+1 $, so $ L_{\gamma_0}\inverse(1)\in A $ if and only if $ L_{\gamma_1}\inverse(1)\in A $. Clearly, we may continue in this manner until we see that $ L_{k}\inverse(n_{j})\in A $ if and only if $ L_{\gamma_{r}}\inverse(1)\in A $. 
	
	By assumption, $ L\inverse(\gamma_{r}+1)\in X $. Because $ A $ and $ B $ are disjoint lower order ideals, \Cref{Swapnil lemma} implies that $ L_{\gamma_{r}}\inverse(1)\in A $ if and only if $ L\inverse(\gamma_{r}+1)\in A $. Hence, the probability that $ L_k\inverse(n_{j})\in A $ is equal to the probability that $ L\inverse(\gamma_{r}+1)\in A $.
	
	Thus, for all $ i $, we have reduced calculating the probability that $ L_k\inverse(n_i)\in A $ to calculating the probability that $ L\inverse(a_i)\in A $ for some particular label $ a_i\in[N] $. Note that our assumptions on $ M $ give $ L\inverse(a_i)\in X $ for all $ i $. For each $ n_{i_j} $, we have that $ a_{i_j}=\gamma^j_{r(j)}+1 $. Recall that the $ \gamma_{r(j)}^j $'s are all distinct; moreover note that for all $ s\not\in\{i_1,\ldots,i_t\} $, we have that $ \gamma^j_{r(j)}+1\neq n_s+k $, since $ \gamma^j_{r(j)}+1\leq\gamma^j_0+1\leq k<n_s+k $. For each $ s\not\in\{i_1,\ldots,i_t\} $, $ a_s=n_s+k $. Thus, the $ a_i $'s are distinct. Since $ L $ is chosen uniformly at random from the labelings extending $ M $, it follows that the probability $ L_{k}\inverse(n_i)\in A $ for all $ i $ is \[\frac{|A|(|A|-1)\cdots(|A|-d)}{|X|(|X|-1)\cdots(|X|-d)},\] as desired.	
\end{proof}

\subsection{Proof of the Main Theorem}
\begin{lemma}\label{standardization}
	Let $ P $ be an $ n $-element poset, and let $ L $ be a labeling of $ P $. Let $ x_0\in P $. Define $ \tilde{P}=P\setminus\{x_0\} $ and $ \tilde{L}=\st(L|_{\tilde{P}}) $. Suppose that $ x_0 $ is not part of the promotion chain for any of the first $ \gamma $ promotions. Then $ \st(L_\gamma|_{\tilde{P}})=\tilde{L}_{\gamma} $.
\end{lemma}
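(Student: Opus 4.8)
The plan is to induct on $\gamma$. The base case $\gamma=0$ is immediate, as $\st(L_0|_{\tilde P})=\st(L|_{\tilde P})=\tilde L=\tilde L_0$. For the inductive step, it suffices to prove the one-step statement: \emph{if $L'$ is a labeling of $P$ whose promotion chain does not contain $x_0$, then $\st((\partial L')|_{\tilde P})=\partial(\st(L'|_{\tilde P}))$.} Granting this, apply it with $L'=L_{\gamma-1}$ (whose promotion chain avoids $x_0$ by hypothesis) and combine with the inductive hypothesis $\st(L_{\gamma-1}|_{\tilde P})=\tilde L_{\gamma-1}$ to obtain $\st(L_\gamma|_{\tilde P})=\st((\partial L_{\gamma-1})|_{\tilde P})=\partial(\tilde L_{\gamma-1})=\tilde L_\gamma$.

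To prove the one-step statement, I would first check that $\tilde L':=\st(L'|_{\tilde P})$ has the \emph{same} promotion chain $v_1<_P\cdots<_P v_m$ as $L'$. Since $x_0$ is not in the chain, $v_1=L'\inverse(1)\neq x_0$ lies in $\tilde P$, and because standardization preserves the relative order of labels, $\tilde L'\inverse(1)=v_1$. For $i<m$, the $L'$-successor $v_{i+1}$ of $v_i$ again lies in $\tilde P$ and has the smallest $L'$-label among \emph{all} elements of $P$ above $v_i$; hence it has the smallest $\tilde L'$-label among the elements of $\tilde P$ above $v_i$, i.e.\ it is the $\tilde L'$-successor of $v_i$. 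Finally $v_m$ is maximal in $\tilde P$, and no $v_i$ with $i<m$ is, since $v_{i+1}\in\tilde P$ lies above it; so the promotion chain of $\tilde L'$ is exactly $v_1<_P\cdots<_P v_m$.

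Next set $k_0=L'(x_0)$, and note $k_0\geq 2$ since $L'\inverse(1)=v_1\neq x_0$. Let $g\colon[n]\setminus\{k_0\}\to[n-1]$ and $h\colon[n]\setminus\{k_0-1\}\to[n-1]$ be the order-preserving bijections (each fixes every value below the deleted one and subtracts $1$ from every value above it); then $\tilde L'=g\circ L'|_{\tilde P}$. Because $x_0$ is not in the chain, $(\partial L')(x_0)=L'(x_0)-1=k_0-1$, so $(\partial L')|_{\tilde P}$ is a bijection onto $[n]\setminus\{k_0-1\}$ and $\st((\partial L')|_{\tilde P})=h\circ(\partial L')|_{\tilde P}$. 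It then remains to verify $h\circ(\partial L')|_{\tilde P}=\partial\tilde L'$ elementwise, using the explicit formula for $\partial$ and the three cases $x\notin\{v_1,\ldots,v_m\}$, $x=v_i$ with $i<m$, and $x=v_m$. In the first two cases both sides reduce to comparing $h(t-1)$ with $g(t)-1$, where $t=L'(x)$ or $t=L'(v_{i+1})$ respectively; these agree directly from the definitions of $g$ and $h$ once one observes $t\neq k_0$ and $t\geq 2$. In the last case both sides equal $n-1$, using $h(n)=n-1$ (valid since $k_0-1<n$).

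The one genuinely delicate point is that $\partial$ on $P$ decrements labels modulo $n$ while $\partial$ on $\tilde P$ decrements modulo $n-1$, so the two maps do not literally intertwine along a fixed relabeling; the identity $(\partial L')(x_0)=k_0-1$ is what reconciles them, since it pins down which label is absent from $(\partial L')|_{\tilde P}$ and hence which collapsing bijection --- $h$, not $g$ --- to place on the outside. I expect getting that bookkeeping exactly right to be the main obstacle; the remaining elementwise check is routine casework.
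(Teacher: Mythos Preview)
Your proof is correct and follows the same core idea as the paper's: both hinge on the observation that the promotion chain depends only on the relative order of labels, so removing $x_0$ (which lies outside the chain) leaves the chain unchanged, and the result follows by induction. The paper's proof is a two-sentence sketch that asserts exactly this; you have carried out in full the bookkeeping the paper suppresses --- in particular the verification that the ``modulo $n$'' and ``modulo $n-1$'' decrements are reconciled by passing from the collapsing bijection $g$ to $h$. Your version is more rigorous but not a different argument.
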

\begin{proof}
	Recall that promotion depends only on the promotion chain, which in turn depends only on the relative order of the labels. Since $ x_0 $ is never in the promotion chain for the first $ \gamma $ promotions, the promotion chains of $ L_0,\ldots,L_{\gamma} $ and $ \tilde{L}_0,\ldots,\tilde{L}_{\gamma} $ are the same, as desired.
\end{proof}
Before proving \Cref{quasi tangled enumeration}, we define some notation. Let $ P $ be as in \Cref{quasi tangled enumeration}, and let $ x_0\in P $ either cover a unique minimal element or be minimal itself. If $ x_0 $ is minimal, let $ \ell=x_0 $; if it covers a minimal element, denote this minimal element by $ \ell $. Define $ \tilde{P} $ and $ \tilde{L} $ as in \Cref{standardization}, and let $ \tilde{\vphi}=\vphi|_{\tilde{P}} $. Note that $ \omega(\ell)\geq1 $ unless $ Q $ is a one-element poset; if $ Q $ has only one element, then so does $ P $. A one-element poset has no quasi-tangled labelings, so henceforth we assume $ Q $ has more than one element. 

For $ j\in\{2,\ldots,\omega(\ell)\} $, let $ x_j $ be the minimal element of $ \tilde{\vphi}\inverse(u_{\ell,j}) $. Also define \[X_j=\{y\in\tilde{P}\;|\;y<_{\tilde{P}}x_j\}\quad\mathrm{and}\quad A_j=\bigcup_{v\leq_Qu_{\ell,j-1}}\tilde{\varphi}\inverse(v).\] Let $ A_j' $ be defined analogously but with $ \vphi $ instead of $ \tilde{\vphi} $. Recall that for $ j\in\{2,\ldots,\omega(\ell)\} $, \[b_{\ell,j}=\sum_{v\leq_Q u_{\ell,j-1}}|\varphi\inverse(v)|\qquad\mathrm{and}\qquad c_{\ell,j}=\sum_{v<_Q u_{\ell,j}}|\varphi\inverse(v)|,\] where $ u_{\ell,0},u_{\ell,1},\ldots,u_{\ell,\omega(\ell)} $ is the unique path in $ Q $ from $ \vphi(\ell) $ to the root. 

In order to count the quasi-tangled labelings of $ P $, we condition on the label of $ x_0 $ and count the labelings $ L $ such that $ L_{n-3}\not\in\call(P) $ and there exists $ y\in P $ such that $ y>_P x_0 $ and $ L_{n-3}(y)<L_{n-3}(x_0) $. For example, when $ x_0 $ is minimal, we count the labelings $ L $ such that $ L(x_0)=n-1 $ and $ L_{n-3}\inverse(1)>_Px_0=L_{n-3}\inverse(2) $. Note here that $ L_{n-3}\inverse(1)>_PL_{n-3}\inverse(2) $ if and only if $ L_{n-3}\inverse(1)\in A_2' $. Our strategy is to fix the label of $ x_0 $ and choose a labeling uniformly at random among the $ (n-1)! $ such labelings of $ P $; observe that this induces the uniform distribution on the labelings of $ \tilde{P} $. Given such a random labeling, we want to calculate the probability that certain labels end up in $ A_2' $.

We will show later that, in each case, calculating this probability can be reduced to calculating the probability that the labels in question end up in $ A_2 $. Thus, we make the following definitions: Let $ K $ be some nonempty subset of $ \{1,2\} $. For $ x_0 $ and $ \ell $ as defined above and $ j\in\{2,\ldots,\omega(\ell)\} $, let $ E_{\ell,j} $ be the event that $ K\subset \tilde{L}_{n-3}(A_j) $. In other words, $ E_{\ell,j} $ is the event that every label in $ K $ ends up on the ``correct side'' of $ x_j $ after $ n-3 $ promotions. We would like to compute $ \Prob(E_{\ell,2}) $ for $ \tilde{L} $. To do so, we note that \begin{equation}\label{conditional prob}
	\Prob(E_{\ell,2})=\Prob(E_{\ell,\omega(\ell)})\Prob(E_{\ell,\omega(\ell)-1}\,|\,E_{\ell,\omega(\ell)})\cdots\Prob(E_{\ell,2}\,|\,E_{\ell,3})
\end{equation} and compute the multiplicands on the right-hand side of the equation above. 

\begin{figure}[h!]
	\resizebox{0.75\textwidth}{!}{
		\begin{tikzpicture}[roundnode/.style={circle, inner sep=0pt, minimum size=3mm}]
			
			\node[roundnode, fill=black] (I1) at (2,1) {};
			\node[roundnode, fill=black] (F1) at (2,4) {};
			\node[roundnode, fill=black] (F2) at (2,3) {};
			\node[roundnode, fill=black] (F3) at (2,2) {};
			\node[roundnode, fill=black] (J1) at (4,0) {};    
			\node[roundnode, fill=black] (G6) at (4,1) {};  
			\node[roundnode, fill=black] (G4) at (3,2) {};
			\node[roundnode, fill=black] (G5) at (5,2) {};
			\node[roundnode, fill=black] (G2) at (3,3) {};
			\node[roundnode, fill=black] (G3) at (5,3) {};
			\node[roundnode, fill=black] (G1) at (4,4) {};
			\node[roundnode, fill=black] (B3) at (3,5) {};
			\node[roundnode, fill=black] (B2) at (3,6) {};
			\node[roundnode, fill=black] (B1) at (3,7) {};
			\node[roundnode, fill=black] (D1) at (7,4) {};
			\node[roundnode, fill=black] (D2) at (6,4) {};
			\node[roundnode, fill=black] (D3) at (8,4) {};
			\node[roundnode, fill=black] (C4) at (7,5) {};
			\node[roundnode, fill=black] (C3) at (7,6) {}; 
			\node[roundnode, fill=black] (C1) at (6,7) {};
			\node[roundnode, fill=black] (C2) at (8,7) {};
			\node[roundnode, fill=black] (A5) at (5,8) {};
			\node[roundnode, fill=black] (A3) at (4,9) {};
			\node[roundnode, fill=black] (A4) at (6,9) {};
			\node[roundnode, fill=black] (A2) at (5,10) {};
			\node[roundnode, fill=black] (A1) at (7,10) {}; 
			
			\path (A1) edge [ultra thick] (A4);
			\path (A2) edge [ultra thick] (A3);
			\path (A2) edge [ultra thick] (A4);
			\path (A3) edge [ultra thick] (A5);
			\path (A4) edge [ultra thick] (A5);
			\path (A5) edge [ultra thick] (B1);
			\path (A5) edge [ultra thick] (C1);
			\path (A5) edge [ultra thick] (C2);
			\path (C1) edge [ultra thick] (C3);
			\path (C2) edge [ultra thick] (C3);
			\path (C3) edge [ultra thick] (C4);
			\path (C4) edge [ultra thick] (D1);
			\path (C4) edge [ultra thick] (D2);
			\path (C4) edge [ultra thick] (D3);
			\path (B1) edge [ultra thick] (B2);
			\path (B2) edge [ultra thick] (B3);
			\path (B3) edge [ultra thick] (F1);
			\path (B3) edge [ultra thick] (G1);
			\path (F1) edge [ultra thick] (F2);
			\path (F2) edge [ultra thick] (F3);
			\path (F3) edge [ultra thick] (I1);
			\path (G1) edge [ultra thick] (G2);
			\path (G1) edge [ultra thick] (G3);
			\path (G2) edge [ultra thick] (G4);
			\path (G2) edge [ultra thick] (G5);
			\path (G3) edge [ultra thick] (G4);
			\path (G3) edge [ultra thick] (G5);
			\path (G5) edge [ultra thick] (G6);
			\path (G4) edge [ultra thick] (G6);
			\path (G6) edge [ultra thick] (J1);

			\node[roundnode, fill=black, label=right:{$ x_0=\ell $}] (J1) at (4,0) {};

			\node[roundnode, fill=red,label=left:{$ x_3 $}] (A5) at (5,8) {};
			\node[roundnode, fill=red,label=right:{$ x_2 $}] (B3) at (3,5) {};

			\draw[dashed,thick] (1.45,0.45) -- (8.25,0.45) -- (8.25,7.35) -- (1.45,7.35) -- (1.45,0.45);
			\draw[dashed,thick,color=purple] (1.55,0.55) -- (5.45,0.55) -- (5.45,7.25) -- (1.55,7.25) -- (1.55,0.55);

			\draw[dashed,thick,color=blue] (2.75,0.75) -- (2.75,4.25) -- (5.25,4.25) -- (5.25,0.75) -- (2.75,0.75);
			\draw[dashed,thick,color=green] (1.65,0.65) -- (5.35,0.65) -- (5.35,4.35) -- (1.65,4.35) -- (1.65,0.65);
			
			\node[roundnode, fill=black,label=right:{$ x_1 $}] (G6) at (4,1) {};  
			
			\node[roundnode, fill=red,label=left:{$ u_{\ell,3} $}] (R1) at (14,6) {};
			\node[roundnode, fill={red},label=left:{$ u_{\ell,2} $}] (R2) at (12,5) {};
			\node[roundnode, fill={black}] (R3) at (16,5) {};
			\node[roundnode, fill={black}] (R4) at (11,4) {};
			\node[roundnode, fill={black},label=right:{$ u_{\ell,1} $}] (R5) at (13,4) {};    
			\node[roundnode, fill={black}] (R6) at (15,4) {};  
			\node[roundnode, fill={black}] (R7) at (16,4) {};
			\node[roundnode, fill={black}] (R8) at (17,4) {};
			\node[roundnode, fill={black}] (R9) at (11,3) {};
			\node[roundnode, fill={black},label=right:{$ u_{\ell,0} $}] (R10) at (13,3) {};
			
			\path (R1) edge [ultra thick] (R2);
			\path (R1) edge [ultra thick] (R3);
			\path (R2) edge [ultra thick] (R4);
			\path (R2) edge [ultra thick] (R5);
			\path (R3) edge [ultra thick] (R6);
			\path (R3) edge [ultra thick] (R7);
			\path (R3) edge [ultra thick] (R8);
			\path (R4) edge [ultra thick] (R9);
			\path (R5) edge [ultra thick] (R10);
			
			\node (P) at (5,-1) {$ P $};
			\node (Q) at (14,2) {$ Q $};

	\end{tikzpicture}		}
	\caption{An illustration of the notation defined above, where $ P $ is the inflated rooted tree from \Cref{fig:inflated rooted tree}. The black and green boxes denote $ X_3 $ and $ X_2 $, respectively, while the red and blue boxes denote $ A_3 $ and $ A_2 $, respectively.}
\end{figure}
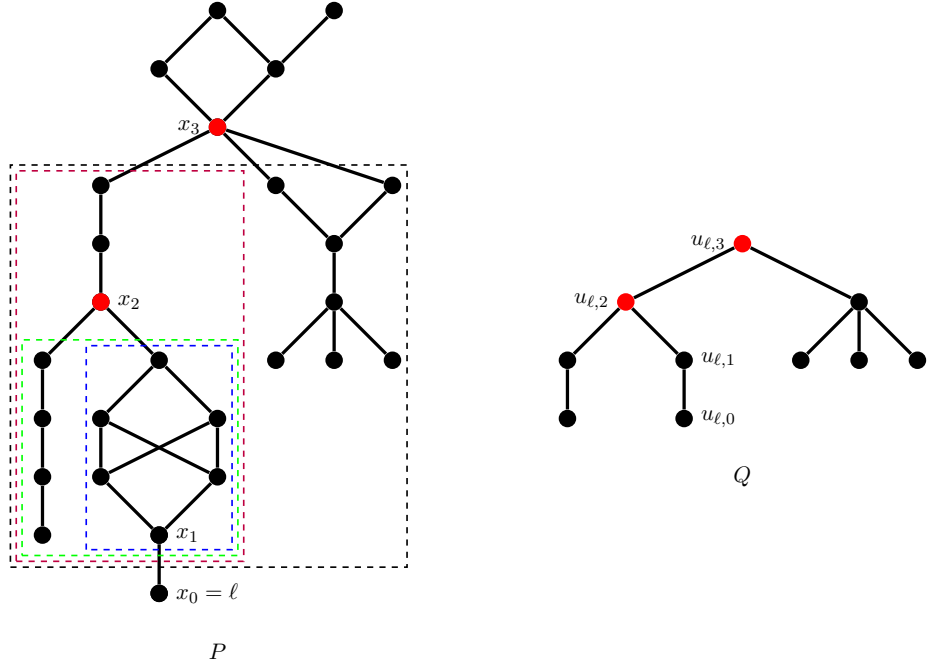

\begin{lemma}\label{A2}
	Fix $ r\in\{1,2\} $. Let $ P $, $ x_0 $, $ \ell $, and the $ A_j $'s, $ A_j' $'s, and $ X_j $'s be defined as above. If $ x_0 $ is minimal, fix $ a\in\{n-1,n\} $. Otherwise fix $ a=n $. Set $ L(x_0)=a $. Then $ L_{n-3}\inverse(r)\in A_2' $ if and only if $ \tilde{L}_{n-3}\inverse(r)\in A_2 $.
\end{lemma}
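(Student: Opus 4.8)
The plan is to compare the $n-3$ promotions of $L$ on $P$ with the $n-3$ promotions of $\tilde L$ on $\tilde P$, and then to deduce the equivalence from the behaviour of the label of $x_0$. The reason for passing to $\tilde P$ is that $A_2$ and $A_2'$ differ only in $x_0$: since $\vphi(x_0)\le_Q u_{\ell,1}$ in either case ($\vphi(x_0)=u_{\ell,0}$ when $x_0=\ell$ is minimal, and $\vphi(x_0)=u_{\ell,1}$ when $x_0$ covers $\ell$), we have $x_0\in A_2'$, and $A_2=A_2'\cap\tilde P=A_2'\setminus\{x_0\}$; hence every $y\in\tilde P$ lies in $A_2'$ if and only if it lies in $A_2$.

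The crux is the identity $\tilde L_{n-3}=\st(L_{n-3}|_{\tilde P})$. I would first record a structural fact about inflated rooted trees with deflated leaves (with $Q$ reduced after deleting its leaves): if $x_0$ covers a minimal element $\ell$, then $\ell$ is the only element below $x_0$, that is $\{y\in P:y<_P x_0\}=\{\ell\}$ and $x_0$ is the sole parent of $\ell$; and if $x_0$ is itself minimal, nothing lies below it. The natural route to the identity is \Cref{standardization}, whose hypothesis asks that $x_0$ never lie in the promotion chain of $L_\gamma$ for $\gamma\in\{0,\dots,n-4\}$. Since the label of $x_0$ drops by at most $1$ per promotion, $L_\gamma(x_0)\ge a-\gamma\ge 2$ throughout, so $x_0$ is never the first vertex $L_\gamma\inverse(1)$ of a promotion chain; and because the only element below $x_0$ is $\ell$, if $x_0$ did occur in such a chain it would have to occur as the second vertex, with $v_1=\ell=L_\gamma\inverse(1)$. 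One then checks that even in this situation, deleting $x_0$ from the promotion chain of $L_\gamma$ yields exactly the promotion chain of $\tilde L_\gamma$ (in $\tilde P$ the vertex $\ell$ lies immediately below each element that covered $x_0$ in $P$), so that the inductive identity $\tilde L_{\gamma+1}=\st(L_{\gamma+1}|_{\tilde P})$ survives. Iterating to $\gamma=n-3$ then gives $\tilde L_{n-3}=\st(L_{n-3}|_{\tilde P})$, and tracking labels gives $L_{n-3}(x_0)\ge a-(n-3)$, which is $\ge 2$ always and $\ge 3$ when $a=n$.

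Granting this, the conclusion follows quickly. If $L_{n-3}\inverse(\rho)\ne x_0$ --- which is automatic when $a=n$, and also holds for $\rho=1$ when $a=n-1$ --- then since $\rho<L_{n-3}(x_0)$ the $\rho$ smallest labels of $L_{n-3}$ all avoid $x_0$, so $\tilde L_{n-3}\inverse(\rho)=L_{n-3}\inverse(\rho)\in\tilde P$, and by the first paragraph this element lies in $A_2'$ if and only if it lies in $A_2$, as asserted. In the remaining case ($a=n-1$, $\rho=2$, and $L_{n-3}(x_0)=2$) we have $x_0=\ell=L_{n-3}\inverse(2)\in A_2'$, so the forward implication is trivial; for the reverse, note $\tilde L_{n-3}\inverse(2)=L_{n-3}\inverse(3)$, and apply \Cref{frozen elements} to the $(n-1)$-element poset $\tilde P$: the set $\{\tilde L_{n-3}\inverse(1),\tilde L_{n-3}\inverse(2)\}$ is a two-element lower order ideal of $\tilde P$, and using that the parent $x_1$ of $\ell$ lies in $A_2$ (together with the structure of $x_0$'s neighbourhood) one shows this ideal is contained in $A_2$, giving $\tilde L_{n-3}\inverse(2)\in A_2$.

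The step I expect to be the main obstacle is the identity $\tilde L_{n-3}=\st(L_{n-3}|_{\tilde P})$: $x_0$ genuinely can enter a promotion chain during the first $n-3$ steps, so \Cref{standardization} does not apply off the shelf, and one must run the refined induction above, describing precisely when and how $x_0$ appears in the chain of $L_\gamma$ and verifying that deleting it recovers the chain of $\tilde L_\gamma$, so that the standardized restriction to $\tilde P$ is unchanged. Once that comparison is in place, the bookkeeping in the last two paragraphs is routine.
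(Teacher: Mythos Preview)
Your plan hinges on the identity $\tilde L_{n-3}=\st(L_{n-3}|_{\tilde P})$, and you correctly isolate this as the crux. Unfortunately, the inductive step you propose---``deleting $x_0$ from the promotion chain of $L_\gamma$ yields exactly the promotion chain of $\tilde L_\gamma$, so that $\tilde L_{\gamma+1}=\st(L_{\gamma+1}|_{\tilde P})$ survives''---is where the argument breaks. The first clause is true, but it does not imply the second. Suppose $x_0$ enters the chain of $L_\gamma$ for the first time, so the chain is $\ell,x_0,v_3,\dots,v_m$ with $p:=L_\gamma(x_0)=n-\gamma$. Then $L_{\gamma+1}(\ell)=p-1$, while $\tilde L_{\gamma+1}(\ell)=\tilde L_\gamma(v_3)-1=L_\gamma(v_3)-2$ (using $\tilde L_\gamma=\st(L_\gamma|_{\tilde P})$ and $L_\gamma(v_3)>p$). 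Since $L_{\gamma+1}(x_0)=L_\gamma(v_3)-1>p-1$, the standardization of $L_{\gamma+1}|_{\tilde P}$ assigns $\ell$ the rank $p-1$, which equals $L_\gamma(v_3)-2$ only when $L_\gamma(v_3)=p+1$. But $v_3$ is merely the element above $x_0$ with smallest label, and nothing forces that label to be $p+1$: if some element incomparable to $x_0$ is already frozen with label $p+1$, then $L_\gamma(v_3)\ge p+2$ and the two labelings of $\tilde P$ disagree at $\ell$.

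A concrete instance: take $P$ the rooted tree with $\ell\lessdot x_0\lessdot a\lessdot r$ and $b,c,d\lessdot r$ (so $n=7$), and $L$ given by $(\ell,b,c,a,r,d,x_0)\mapsto(3,1,2,4,5,6,7)$. Then $x_0$ first enters the chain at step $\gamma=3$ with $L_3(x_0)=4$ and $L_3(a)=6$, and one computes $\tilde L_4(\ell)=4$ while $\st(L_4|_{\tilde P})(\ell)=3$. So the identity you rely on genuinely fails.

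The paper's proof avoids this by never asserting the global identity. In the case where $x_0$ covers $\ell$, it argues the two directions separately: if $L_{n-3}^{-1}(r)\in A_2'$, one shows by contradiction that $x_0$ \emph{cannot} have entered any chain (the presence of a small label above $x_0$ at step $\gamma$ would prevent $x_0$ from being the $L_\gamma$-successor of $\ell$), so \Cref{standardization} applies directly; if $L_{n-3}^{-1}(r)\notin A_2'$ and $x_0$ does enter a chain at some step $\gamma$, one uses $\st(L_\gamma|_{\tilde P})=\tilde L_\gamma$ (valid up to that step) together with the observation that $L_\gamma^{-1}(r+n-3-\gamma)$ is incomparable to $x_0$, and then \Cref{Swapnil lemma} pushes this incomparability forward to step $n-3$ in $\tilde P$. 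You will need to replace your inductive identity with an argument of this shape.
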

\begin{proof}
	Suppose $ x_0 $ is minimal. Then $ L_{n-3}(x_0)=L(x_0)-n+3\in\{2,3\} $, and $ x_0 $ is never in the promotion chain for the first $ n-3 $ promotions. The lemma follows immediately from applying \Cref{standardization} to $ P $, $ L $, and $ x_0 $.
	
	Suppose $ x_0 $ covers a unique minimal element $ \ell $ and $ L(x_0)=n $. Also assume that $ L_{n-3}\inverse(r)\in A_2' $. We claim that this implies $ x_0 $ is not in the promotion chain for the first $ n-3 $ promotions. Suppose to the contrary that $ x_0 $ is in the $ \gamma $th promotion chain for some $ 0\leq\gamma\leq n-4 $. This forces $ L_\gamma(\ell)=1 $ and implies that $ x_0 $ is the $ L_\gamma $-successor of $ \ell $. Note that $ L_{n-3}\inverse(r)\in A_2' $ implies that $ L_{\alpha}\inverse(r+n-3-\alpha) $ is comparable to $ x_0 $ for all $ 0\leq\alpha\leq n-3 $. In particular, $ L_{\gamma}\inverse(r+n-3-\gamma) $ must be above $ x_0 $, since $ x_0 $ is above only $ \ell $ and $ L_{\gamma}(\ell)=1 $. It follows that $ x_0 $ cannot be the $ L_{\gamma} $-successor of $ \ell $, since $ r+n-3-\gamma<n-\gamma=L_{\gamma}(x_0) $. This is a contradiction, so $ x_0 $ is not in the promotion chains of $ L,\ldots,L_{n-4} $. Hence, we may apply \Cref{standardization}, and it follows that $ \tilde{L}_{n-3}\inverse(r)\in A_2 $. 
	
	For the converse, assume that $ L_{n-3}\inverse(r)\not\in A_2' $. We have two cases: (1) $ x_0 $ is not in the promotion chains of $ L,\ldots,L_{n-4} $; (2) $ x_0 $ is in the promotion chain of $ L_{\gamma} $ for some $ \gamma\in\{0,\ldots,n-4\} $. For case (1), we simply apply \Cref{standardization} and are done. 
	
	For case (2), we note that for all $ 0\leq\alpha\leq\gamma $, \Cref{standardization} implies that $ \st(L_{\alpha}|_{\tilde{P}})=\tilde{L}_{\alpha} $. In particular, we have $ \st(L_{\gamma}|_{\tilde{P}})=\tilde{L}_{\gamma} $. Since we are assuming that $ x_0 $ is in the promotion chain of $ L_{\gamma} $, it follows that $ L_{\gamma}(\ell)=1 $ and that $ x_0 $ is the $ L_{\gamma} $-successor of $ \ell $. Hence, with respect to $ L_{\gamma} $, there are no elements of $ P $ above $ x_0 $ with label smaller than $ n-\gamma $. In particular, $ L_{\gamma}\inverse(r+n-3-\gamma) $ is not comparable to $ x_0 $ or $ \ell $. Since $ \st(L_{\gamma}|_{\tilde{P}})=\tilde{L}_{\gamma} $, it follows that $ \tilde{L}_{\gamma}\inverse(r+n-3-\gamma) $ is not comparable to $ \ell $ in $ \tilde{P} $. Therefore, $ \tilde{L}_{n-3}\inverse(r)\not\in A_2 $, as desired. 
\end{proof}
The next step is using this machinery to compute the conditional probabilities $ \Prob(E_{\ell, j}|E_{\ell, j+1}) $ as well as $ \Prob(E_{\ell,\omega(\ell)}) $.
\begin{lemma}\label{conditional helper}
	Let $ P $, $ x_0 $, $ \ell $, $ K $, and the $ E_{\ell,j} $'s, $ A_j $'s, $ X_j $'s, $ b_{\ell,j} $'s, and $ c_{\ell,j} $'s be defined as above. Let $ j\in\{2,\ldots,\omega(\ell)-1\} $, and fix any injective map \[M_j:\tilde{P}\setminus X_j\to[n-1]\] such that every labeling $ \tilde{L}:\tilde{P}\to[n-1] $ extending $ M_j $ has the property that $ E_{\ell,j+1} $ occurs. Consider the uniform distribution on such labelings $ \tilde{L} $. Then \[\Prob(E_{\ell,j}\,|\,E_{\ell,j+1})=\prod_{t=1}^{|K|}\frac{|A_j|-t+1}{|X_j|-t+1}=\prod_{t=1}^{|K|}\frac{b_{\ell,j}-t}{c_{\ell,j}-t}.\]
\end{lemma}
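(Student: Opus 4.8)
The plan is to realize $\Prob(E_{\ell,j}\mid E_{\ell,j+1})$ as an instance of the Probability Lemma (\Cref{probability lemma}), applied to the $(n-1)$-element poset $\tilde P$ with $x=x_j$, $X=X_j$, $A=A_j$, $B=X_j\setminus A_j$, $k=n-3$, and with the tracked labels $n_1,\dots,n_d$ taken to be the elements of $K$ (so $d=|K|$). Before invoking it I would record the bookkeeping. Because $(P,\vphi)$ is an inflated rooted tree with deflated leaves, $X_j=\{y\in\tilde P : \tilde\vphi(y)<_Q u_{\ell,j}\}$ is exactly the set of elements of $\tilde P$ strictly below $x_j$; every element of $\tilde P$ comparable with an element of $X_j$ is comparable with $x_j$ (here the rooted-tree shape of $Q$ and the unique-minimal-element axiom of an inflation are used, since the ancestors in $Q$ of anything below $u_{\ell,j}$ all pass through $u_{\ell,j}$); and $A_j$ together with $X_j\setminus A_j$ are two mutually incomparable lower order ideals of $\tilde P$ partitioning $X_j$ — the part of the subtree below $u_{\ell,j-1}$, and the parts of the subtree below the remaining children of $u_{\ell,j}$. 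Since $x_0$ lies in $\tilde\vphi\inverse(u_{\ell,0})$ or $\tilde\vphi\inverse(u_{\ell,1})$, both of which are counted in $b_{\ell,j}$ and $c_{\ell,j}$ for every $j\ge 2$, removing it yields $|A_j|=b_{\ell,j}-1$ and $|X_j|=c_{\ell,j}-1$, so that the Probability Lemma's conclusion reads exactly $\prod_{t=1}^{|K|}\frac{|A_j|-t+1}{|X_j|-t+1}=\prod_{t=1}^{|K|}\frac{b_{\ell,j}-t}{c_{\ell,j}-t}$.

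To apply \Cref{probability lemma} I must verify its standing hypothesis: that every labeling $\tilde L$ extending $M_j$ satisfies $\tilde L_{n-3}\inverse(m)\in X_j$ for each $m\in K$ (then $M_j$ plays the role of the map ``$M$'' and the elements of $K$ play the role of ``$n_1,\dots,n_d$''). Conditioning on $E_{\ell,j+1}$ is automatic since $M_j$ forces it, so the uniform measure on extensions of $M_j$ is precisely the conditional measure in the statement; and since $A_j\se A_{j+1}$, the event $E_{\ell,j}$ implies $E_{\ell,j+1}$, which is why this conditional probability is the natural quantity. By \Cref{promotion dependence} and \Cref{label distribution} applied with $x=x_j$, $X=X_j$, the restriction $\tilde L_{n-3}|_{\tilde P\setminus X_j}$ and the set $\tilde L_{n-3}(X_j)$ are the same for every extension of $M_j$; in particular, for each fixed $m$ the position $\tilde L_{n-3}\inverse(m)$ lies in $X_j$ for all extensions or for none. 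In the ``for none'' case, $E_{\ell,j+1}$ forces $\tilde L_{n-3}\inverse(m)\in A_{j+1}\setminus X_j=\tilde\vphi\inverse(u_{\ell,j})$ for all extensions, so $M_j$ already forces $E_{\ell,j}$ to fail; I would rule this out, establishing the hypothesis.

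Ruling out that bad case is the step I expect to be the main obstacle, and the argument should mirror \Cref{A2}. Since $X_j$ is a lower order ideal, \Cref{Swapnil lemma} shows that once the trajectory of the tracked label $m$ enters $X_j$ it never leaves it, so if $\tilde L_{n-3}\inverse(m)$ lies in the fiber $\tilde\vphi\inverse(u_{\ell,j})$ then $m$ reaches that fiber without ever passing through $X_j$. Tracing the trajectory backwards with \Cref{Swapnil lemma}, at the first moment the relevant decreasing label enters $A_{j+1}$ it does so along a promotion chain whose predecessor vertex lies in $A_{j+1}$; I would argue that, for a label value $m\in\{1,2\}$ that has descended to the bottom after $n-3$ promotions and satisfies $E_{\ell,j+1}$, this predecessor in fact lies in $X_j$ rather than in $\tilde\vphi\inverse(u_{\ell,j})$ — using the \emph{reduced} hypothesis on $Q$ (which guarantees genuine branching at $u_{\ell,j}$) together with the fact that the unique minimal element $x_j$ of $\tilde\vphi\inverse(u_{\ell,j})$ sits above all of $X_j$, so the minimal-label successor feeding the promotion chain cannot stall inside the fiber. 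This forces the label into $X_j$ after all, contradicting the assumption. With the hypothesis in hand, \Cref{probability lemma} applies with $A=A_j$, $X=X_j$, $k=n-3$ and the $|K|$ tracked labels, giving $\Prob(E_{\ell,j}\mid E_{\ell,j+1})=\prod_{t=1}^{|K|}\frac{|A_j|-t+1}{|X_j|-t+1}$; substituting $|A_j|=b_{\ell,j}-1$ and $|X_j|=c_{\ell,j}-1$ then yields the stated formula.
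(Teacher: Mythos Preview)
Your overall strategy---reducing to an application of the Probability Lemma (\Cref{probability lemma}) with $N=n-1$, $x=x_j$, $X=X_j$, $A=A_j$, $k=n-3$, and $\{n_1,\ldots,n_d\}=K$---is exactly the paper's approach, and your bookkeeping ($|A_j|=b_{\ell,j}-1$, $|X_j|=c_{\ell,j}-1$) is correct.

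Where you diverge is in verifying the hypothesis that every extension $\tilde L$ of $M_j$ satisfies $\tilde L_{n-3}\inverse(m)\in X_j$ for each $m\in K$. You propose a trajectory-tracing argument via \Cref{Swapnil lemma}, claiming the tracked label ``cannot stall inside the fiber'' $\tilde\vphi\inverse(u_{\ell,j})$. This sketch is vague: nothing you have written actually rules out the possibility that the label descends from above and comes to rest at $x_j$ (or another element of the fiber) after $n-3$ promotions, and the appeal to ``genuine branching at $u_{\ell,j}$'' does not by itself prevent this. It is not clear the argument can be completed along these lines without importing the observation below.

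The paper's verification is one line and avoids trajectories entirely. By \Cref{frozen elements} applied to the $(n-1)$-element poset $\tilde P$ with $\gamma=n-3$, the set $\{\tilde L_{n-3}\inverse(1),\tilde L_{n-3}\inverse(2)\}$ is a lower order ideal of $\tilde P$. The event $E_{\ell,j+1}$ places the elements $\tilde L_{n-3}\inverse(m)$ for $m\in K$ inside $A_{j+1}=X_j\cup\tilde\vphi\inverse(u_{\ell,j})$. But every element of $\tilde\vphi\inverse(u_{\ell,j})$ lies weakly above $x_j$, and $x_j$ already has $|X_j|\ge|A_2|\ge 2$ elements strictly below it (this is where the reduced hypothesis from \Cref{reduced} enters), so no element of the fiber can belong to a two-element lower order ideal. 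Hence $K\se\tilde L_{n-3}(X_j)$, and the Probability Lemma applies immediately. Your third paragraph can be replaced entirely by this observation.
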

\begin{proof}
	Recall that we may always assume $ Q $ has more than one element and thus that $ \omega(\ell)\geq1 $. Also, by \Cref{reduced}, we have that $ |A_2|\geq2 $. 
	
	By hypothesis, every labeling $ \tilde{L}:\tilde{P}\to[n-1] $ extending $ M_j $ has the property that $ E_{\ell,j+1} $ occurs. Recall that this implies $ K\subset\tilde{L}_{n-3}(A_{j+1}) $ and hence that $ K\subset\tilde{L}_{n-3}(X_j) $, since $ \{\tilde{L}_{n-3}\inverse(1),\tilde{L}_{n-3}\inverse(2)\} $ forms a lower order ideal of $ \tilde{P} $. By \Cref{label distribution}, $ \tilde{L}_{\gamma} $ depends only on $ \tilde{L}|_{\tilde{P}\setminus X_j} $. Hence, $ \Prob(E_{\ell,j}\,|\,E_{\ell,j+1}) $ depends only on $ \tilde{L}|_{\tilde{P}\setminus X_j} $.  Apply the Probability Lemma (\Cref{probability lemma}) with $ N=n-1 $, $ x=x_j $, $ X=X_j $, $ A=A_j $, $ M=M_j $, $ k=n-3 $, and $ \{n_1,\ldots,n_d\}=K $. This tells us that \[\Prob(E_{\ell,j}\,|\,E_{\ell,j+1})=\prod_{t=1}^{|K|}\frac{|A_j|-t+1}{|X_j|-t+1}.\] The lemma follows.  
\end{proof}
\begin{lemma}\label{helper 1}
	With notation as in the previous lemma, fix any injective map \[M_{\omega(\ell)}:\tilde{P}\setminus X_{\omega(\ell)}\to[n-1],\] and consider the uniform distribution on the labelings $ \tilde{L}:\tilde{P}\to[n-1] $ extending $ M_{\omega(\ell)} $. Then \[\Prob(E_{\ell,\omega(\ell)})=\prod_{t=1}^{|K|}\frac{|A_{\omega(\ell)}|-t+1}{X_{\omega(\ell)}-t+1}=\prod_{t=1}^{|K|}\frac{b_{\ell,\omega(\ell)}-t}{c_{\ell,\omega(\ell)}-t}.\]
\end{lemma}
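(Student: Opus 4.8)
This is the base term $\Prob(E_{\ell,\omega(\ell)})$ in the telescoping decomposition of $\Prob(E_{\ell,2})$ used above, and the plan is to prove it exactly as in \Cref{conditional helper}, but at the top of the path $u_{\ell,0}\lessdot\dots\lessdot u_{\ell,\omega(\ell)}$, where no higher event is left to condition on. First I would set up the Probability Lemma (\Cref{probability lemma}) with $N=n-1$, $x=x_{\omega(\ell)}$, $X=X_{\omega(\ell)}$, $A=A_{\omega(\ell)}$, $M=M_{\omega(\ell)}$, $k=n-3$, and $\{n_1,\dots,n_d\}=K$; granting its hypothesis, it outputs $\Prob(E_{\ell,\omega(\ell)})=\prod_{t=1}^{|K|}\frac{|A_{\omega(\ell)}|-t+1}{|X_{\omega(\ell)}|-t+1}$ at once. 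The real content is verifying that hypothesis: that \emph{every} labeling $\tilde{L}$ of $\tilde{P}$ extending $M_{\omega(\ell)}$ satisfies $\tilde{L}_{n-3}\inverse(r)\in X_{\omega(\ell)}$ for each $r\in K$. In \Cref{conditional helper} this was handed to us by the conditioning event $E_{\ell,j+1}$; here it must be obtained unconditionally.

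For that I would argue in three short steps. First, applying \Cref{frozen elements} to the $(n-1)$-element poset $\tilde{P}$ with $\gamma=n-3$ shows $\tilde{L}_{n-3}\inverse(3),\dots,\tilde{L}_{n-3}\inverse(n-1)$ are frozen with respect to $\tilde{L}_{n-3}$, i.e.\ $\{\tilde{L}_{n-3}\inverse(1),\tilde{L}_{n-3}\inverse(2)\}$ is a lower order ideal of $\tilde{P}$; hence $\{\tilde{L}_{n-3}\inverse(r):r\in K\}$ lies inside a lower order ideal of $\tilde{P}$ of size at most two, for every extension $\tilde{L}$ of $M_{\omega(\ell)}$. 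Second, since $u_{\ell,\omega(\ell)}$ is the root of $Q$, the down-set $X_{\omega(\ell)}$ is precisely $\tilde{P}$ with the root fiber $\tilde{\vphi}\inverse(u_{\ell,\omega(\ell)})$ removed, and its unique minimal element $x_{\omega(\ell)}$ lies strictly above every element of $X_{\omega(\ell)}$; so it suffices to know $|X_{\omega(\ell)}|\ge2$, for then $x_{\omega(\ell)}$---and a fortiori every element of the root fiber---has at least two elements below it, belongs to no lower order ideal of size at most two, and we conclude $\{\tilde{L}_{n-3}\inverse(r):r\in K\}\subseteq X_{\omega(\ell)}$. Third, $|X_{\omega(\ell)}|\ge2$ is where the standing hypotheses enter: when $\omega(\ell)\ge2$ the fibers of $u_{\ell,0}$ and $u_{\ell,1}$ both lie in $X_{\omega(\ell)}$, and the assumption that $Q$ with its leaves removed is reduced (\Cref{reduced}) keeps these from contributing fewer than two elements between them; the degenerate configurations with $\omega(\ell)=1$ lie outside the scope of this lemma and are handled directly in the proof of the main theorem.

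Once the hypothesis of \Cref{probability lemma} is verified the first equality is immediate, and the second is a routine count: $A_{\omega(\ell)}$ and $X_{\omega(\ell)}$ are obtained from the down-sets in $P$ of sizes $b_{\ell,\omega(\ell)}$ and $c_{\ell,\omega(\ell)}$ by deleting the single element $x_0$, which lies in both, so $|A_{\omega(\ell)}|=b_{\ell,\omega(\ell)}-1$ and $|X_{\omega(\ell)}|=c_{\ell,\omega(\ell)}-1$, and substituting turns $\prod_{t=1}^{|K|}\frac{|A_{\omega(\ell)}|-t+1}{|X_{\omega(\ell)}|-t+1}$ into $\prod_{t=1}^{|K|}\frac{b_{\ell,\omega(\ell)}-t}{c_{\ell,\omega(\ell)}-t}$. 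The only genuinely new obstacle compared with the previous lemma is the second paragraph---showing, with no conditioning event available, that the two smallest labels after $n-3$ promotions are trapped below $x_{\omega(\ell)}$; everything after that is an application of the Probability Lemma identical in spirit to \Cref{conditional helper}.
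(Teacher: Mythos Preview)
Your overall strategy—verify the hypothesis of the Probability Lemma by showing $\{\tilde L_{n-3}^{-1}(1),\tilde L_{n-3}^{-1}(2)\}$ is a two-element lower order ideal contained in $X_{\omega(\ell)}$, then apply \Cref{probability lemma}—is exactly the paper's approach. The paper organizes the boundary case differently: it splits on whether $P$ has a unique minimal element (which, under \Cref{reduced} and the deflated-leaves assumption, is equivalent to $\omega(\ell)=1$ for the single leaf), handles that case by observing the product is $1$, and in the remaining case uses the existence of a \emph{second} minimal element of $P$ to guarantee $|X_{\omega(\ell)}|\ge 2$. Your version instead restricts to $\omega(\ell)\ge 2$ and tries to squeeze $|X_{\omega(\ell)}|\ge 2$ out of reducedness. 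Both routes lead to the same application of the Probability Lemma and the same cardinality translation $|A_{\omega(\ell)}|=b_{\ell,\omega(\ell)}-1$, $|X_{\omega(\ell)}|=c_{\ell,\omega(\ell)}-1$.

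There is, however, a small but genuine gap in your third step. You claim that the fibers of $u_{\ell,0}$ and $u_{\ell,1}$ in $\tilde P$ together contain at least two elements, and that reducedness of $Q$ minus its leaves enforces this. That is not correct as stated: if $x_0=\ell$ (so the $u_{\ell,0}$-fiber in $\tilde P$ is empty) and $|\vphi^{-1}(u_{\ell,1})|=1$, those two fibers contribute only one element of $\tilde P$. Reducedness does \emph{not} bound fiber sizes. What reducedness actually gives you is that these two vertices cannot be the only ones below the root of $Q$: if $\omega(\ell)=2$ and $u_{\ell,1}$ were the root's unique child, then $Q$ minus its leaves would be a two-element chain, contradicting \Cref{reduced}; and if $\omega(\ell)\ge 3$ then $u_{\ell,2}$ already supplies an extra fiber. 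Either way one gets $c_{\ell,\omega(\ell)}\ge 3$, hence $|X_{\omega(\ell)}|\ge 2$. So your conclusion is right, but the sentence as written does not justify it; you need to look beyond the two fibers you named. The paper's device—simply invoking a second minimal element of $P$ when one exists—sidesteps this bookkeeping entirely.
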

\begin{proof}
	We split into cases based on whether or not $ P $ has a unique minimal element. Suppose $ P $ has a unique minimal element. Then $ A_j=X_j $, and, consequentially, $ b_{\ell,j}=c_{\ell,j} $. Hence, it suffices to show that the probability in question is 1. Since $ \{\tilde{L}_{n-3}\inverse(1),\tilde{L}_{n-3}\inverse(2)\} $ forms a lower order ideal of size 2, it is not difficult to see that when $ P $ has a unique minimal element, $ \{\tilde{L}_{n-3}\inverse(1),\tilde{L}_{n-3}\inverse(2)\}\subset A_2 $. Hence, $ K\subset\tilde{L}_{n-3}(A_2) $. Because $ A_2\subset A_j $, the probability in question is 1, as desired. 
	
	Suppose $ P $ does not have a unique minimal element. The argument is identical to that in \Cref{conditional helper} as long as we show that for any such labeling $ \tilde{L} $, $ K\subset\tilde{L}_{n-3}(X_{\omega(\ell)}) $. This simply follows from recalling that $ \{\tilde{L}_{n-3}\inverse(1),\tilde{L}_{n-3}\inverse(2)\} $ forms a lower order ideal of size 2, because $ |\tilde{P}|=n-1 $. Since $ P $ does not have a unique minimal element, $ x_{\omega(\ell)} $ is greater than at least two elements, implying $ \{\tilde{L}_{n-3}\inverse(1),\tilde{L}_{n-3}\inverse(2)\}\subset X_{\omega(\ell)}=\{y\in\tilde{P}\;|\;y<_{\tilde{P}}x_{\omega(\ell)}\} $. 
\end{proof}
The previous two lemmas only give us information about $ \tilde{P} $. In the following, we use \Cref{A2} to translate these results into information about $ P $. 
\begin{lemma}\label{helper}
	Let $ \vphi:P\to Q $ be an inflation of a rooted tree poset with deflated leaves, and let $ n $ be the number of elements in $ P $. Let $ x_0 $ cover a unique minimal element or be minimal itself. If $ x_0 $ is a minimal element, let $ \ell=x_0 $; otherwise let $ \ell $ be the minimal element covered by $ x_0 $. If $ x_0 $ is minimal, fix $ a\in\{n-1,n\} $. Otherwise fix $ a=n $. Let the $ A_j $'s, $ X_j $'s, $ A_j' $'s, $ E_{\ell,j} $'s, $ b_{\ell,j} $'s, and $ c_{\ell,j} $'s be defined as above. Let $ K$ be some nonempty subset of $\{1,2\} $. Suppose $ L:P\to[n] $ is a labeling chosen uniformly at random among the $ (n-1)! $ labelings with $ L(x_0)=a $. Then the probability that every label in $ K $ is in $ L_{n-3}(A_2') $ is \[\prod_{j=2}^{\omega(\ell)}\prod_{t=1}^{|K|}\frac{b_{\ell,j}-t}{c_{\ell,j}-t}.\]
\end{lemma}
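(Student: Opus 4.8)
The plan is to carry the probabilistic estimates for $\tilde{P}$ established in \Cref{A2}, \Cref{conditional helper}, and \Cref{helper 1} over to $P$ via standardization and then assemble them using the telescoping identity \eqref{conditional prob}.

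First I would record the distributional reduction. With $L(x_0)=a$ fixed, the assignment $L\mapsto\tilde{L}=\st(L|_{\tilde{P}})$ is a bijection from the $(n-1)!$ labelings of $P$ with $L(x_0)=a$ onto the set of all labelings of $\tilde{P}$ by $[n-1]$: the inverse reinserts $a$ at $x_0$ and recovers the remaining labels through the order-preserving bijection $[n-1]\to[n]\setminus\{a\}$. Hence choosing $L$ uniformly at random subject to $L(x_0)=a$ induces precisely the uniform distribution on labelings $\tilde{L}$ of $\tilde{P}$. Next, for each $r\in K\subseteq\{1,2\}$, \Cref{A2} identifies the event $\{L_{n-3}\inverse(r)\in A_2'\}$ with the event $\{\tilde{L}_{n-3}\inverse(r)\in A_2\}$; intersecting over $r\in K$ shows that the event ``every label of $K$ lies in $L_{n-3}(A_2')$'' coincides with the event $E_{\ell,2}$ for the induced random labeling $\tilde{L}$. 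So it suffices to prove that $\Prob(E_{\ell,2})=\prod_{j=2}^{\omega(\ell)}\prod_{t=1}^{|K|}\frac{b_{\ell,j}-t}{c_{\ell,j}-t}$ under the uniform distribution on labelings of $\tilde{P}$.

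Now I would invoke \eqref{conditional prob}. Its product form is legitimate because the events are nested, $E_{\ell,2}\subseteq E_{\ell,3}\subseteq\cdots\subseteq E_{\ell,\omega(\ell)}$: as $j$ increases, $u_{\ell,j-1}$ moves toward the root of $Q$, so the pulled-back lower ideal $A_j$ only grows (indeed $A_{j+1}=X_j\sqcup\tilde{\varphi}\inverse(u_{\ell,j})$). To evaluate the generic factor $\Prob(E_{\ell,j}\mid E_{\ell,j+1})$ I would condition further on $M_j=\tilde{L}|_{\tilde{P}\setminus X_j}$. One checks that $E_{\ell,j+1}$ is determined by $M_j$, so the admissible $M_j$'s exactly partition the event $E_{\ell,j+1}$; then \Cref{conditional helper} gives $\Prob(E_{\ell,j}\mid M_j)=\prod_{t=1}^{|K|}\frac{b_{\ell,j}-t}{c_{\ell,j}-t}$ for each such $M_j$, a value independent of $M_j$, so the law of total probability yields $\Prob(E_{\ell,j}\mid E_{\ell,j+1})=\prod_{t=1}^{|K|}\frac{b_{\ell,j}-t}{c_{\ell,j}-t}$. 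The leading factor $\Prob(E_{\ell,\omega(\ell)})$ is treated identically using \Cref{helper 1}. Multiplying all the factors collapses \eqref{conditional prob} to the claimed product. The degenerate cases ($Q$ a one-element poset, or $\omega(\ell)\le1$) should be dispatched first; there the asserted product is empty, equal to $1$, which matches the fact that the size-$2$ lower ideal $\{\tilde{L}_{n-3}\inverse(1),\tilde{L}_{n-3}\inverse(2)\}$ is automatically contained in $A_2$.

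The step I expect to be the main obstacle is justifying that each conditioning event $E_{\ell,j+1}$ is measurable with respect to $M_j=\tilde{L}|_{\tilde{P}\setminus X_j}$, which is what makes \eqref{conditional prob} and the repeated application of \Cref{conditional helper} valid. This relies on the inclusions $X_2\subseteq\cdots\subseteq X_{\omega(\ell)}$ and $X_j\subseteq A_{j+1}\subseteq X_{j+1}$ — all consequences of the minimal element $x_j$ of $\tilde{\varphi}\inverse(u_{\ell,j})$ rising toward the root as $j$ grows — together with \Cref{label distribution} and the ``pulls down'' analysis preceding \Cref{probability lemma}, which together show that whether the relevant labels of $\tilde{L}_{n-3}$ land in $A_{j+1}$ is governed by labels lying either outside $X_j$ (hence visible to $M_j$) or inside $X_j\subseteq A_{j+1}$ (where the outcome is forced).
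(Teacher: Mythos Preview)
Your proposal is correct and follows exactly the same route as the paper: reduce to $\tilde{L}$ via the bijection and \Cref{A2}, then expand $\Prob(E_{\ell,2})$ via the chain \eqref{conditional prob} and evaluate each factor with \Cref{conditional helper} and \Cref{helper 1}. You supply considerably more justification than the paper does---in particular the $M_j$-measurability of $E_{\ell,j+1}$ (which follows from \Cref{promotion dependence} and \Cref{label distribution} since $A_{j+1}\setminus X_j\subseteq\tilde P\setminus X_j$ and $\tilde L_{n-3}(X_j)$ is determined by $M_j$) and the law-of-total-probability step---but the argument is the same.
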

\begin{proof}
	Note that $ L $ induces the uniform distribution on labelings $ \tilde{L}:\tilde{P}\to[n-1] $. By \Cref{A2}, $ K\subset L_{n-3}(A_2') $ if and only if $ K\subset \tilde{L}_{n-3}(A_2) $. Thus, we would like to calculate \[\Prob(E_{\ell,2})=\Prob(E_{\ell,\omega(\ell)})\Prob(E_{\ell,\omega(\ell)-1}\,|\,E_{\ell,\omega(\ell)})\cdots\Prob(E_{\ell,2}\,|\,E_{\ell,3}).\] The result follows from applying \Cref{helper 1} and \Cref{conditional helper}.
\end{proof}
The following three lemmas are applications of \Cref{helper} to the configurations of interest for the proof of \Cref{quasi tangled enumeration}:
\begin{lemma}\label{L^-1(n-1) minimal}
	With notation as in \Cref{quasi tangled enumeration}, the number of labelings $ L $ of $ P $ with $ L\inverse(n-1)\not\in S $, $ L\inverse(n-1) $ minimal, and $ L_{n-3}\inverse(2)<_PL_{n-3}\inverse(1) $ is \[(n-1)!\left(\sum_{\ell\in T}\prod_{j=2}^{\omega(\ell)}\frac{b_{\ell,j}-1}{c_{\ell,j}-1}+\sum_{\ell\in R}\prod_{j=2}^{\omega(\ell)}\frac{b_{\ell,j}-1}{c_{\ell,j}-1}\right).\]
\end{lemma}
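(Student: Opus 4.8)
The plan is to condition on the element $x_0 := L^{-1}(n-1)$, which is required to be minimal; writing $\ell = x_0$ and putting $a = n-1$, we will be exactly in the situation of \Cref{helper}. The first step is to pin down $L_{n-3}^{-1}(2)$: since $\ell$ is minimal and carries label $n-1$, an easy induction shows $L_\gamma(\ell) = n-1-\gamma \ge 2$ for all $\gamma \le n-4$, and a minimal element can belong to a promotion chain only as its starting vertex (it has no $L_\gamma$-predecessor); hence $\ell$ is never in a promotion chain during the first $n-3$ promotions and $L_{n-3}^{-1}(2) = \ell$. Therefore the condition $L_{n-3}^{-1}(2) <_P L_{n-3}^{-1}(1)$ says precisely that $L_{n-3}^{-1}(1) >_P \ell$, i.e.\ that the element bearing label $1$ after $n-3$ promotions lies strictly above $\ell$.

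The key structural step is to recast ``$L_{n-3}^{-1}(1) >_P \ell$'' as the event ``$1 \in L_{n-3}(A_2')$'' handled by \Cref{helper}. Here $A_2' = \bigcup_{v \le_Q u_{\ell,1}} \varphi^{-1}(v)$, where $u_{\ell,0} = \varphi(\ell)$ is a leaf of $Q$ (the minimal elements of $P$ are exactly the singleton fibers over leaves of $Q$, by property (2) of an inflation together with the deflated-leaf hypothesis) and $u_{\ell,1}$ is its unique parent in $Q$. I would show that when $\ell \in R$ or $\ell \in T$ the hypothesis ``$\ell$ is the only child of its parent $x$'' forces $u_{\ell,0}$ to be the \emph{unique} child of $u_{\ell,1}$ in $Q$: since $x$ is the minimal element of $\varphi^{-1}(u_{\ell,1})$, a second child $w \lessdot_Q u_{\ell,1}$ would contribute a (maximal) element of $\varphi^{-1}(w)$ as a second child of $x$. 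Consequently $A_2' = \{\ell\} \cup \varphi^{-1}(u_{\ell,1})$, and every element of $\varphi^{-1}(u_{\ell,1})$ lies strictly above $\ell$ by property (2); since $L_{n-3}^{-1}(1) \ne \ell = L_{n-3}^{-1}(2)$, this gives the equivalence $\{L_{n-3}^{-1}(1) \in A_2'\} \Longleftrightarrow \{L_{n-3}^{-1}(2) <_P L_{n-3}^{-1}(1)\}$.

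Next I would show that the remaining minimal elements $\ell \notin R \cup S \cup T$ contribute nothing. For $n > 2$ such an $\ell$ has a parent $x$ covering at least three elements (unwinding the definitions of $R, S, T$ leaves only this case). If some labeling with $L^{-1}(n-1) = \ell$ had $L_{n-3}^{-1}(2) <_P L_{n-3}^{-1}(1)$, then $L_{n-3} \notin \call(P)$, so by \Cref{lower order ideal of size 3} the set $I := \{L_{n-3}^{-1}(1), L_{n-3}^{-1}(2), L_{n-3}^{-1}(3)\}$ is a lower order ideal; but $L_{n-3}^{-1}(1) >_P \ell$ forces $L_{n-3}^{-1}(1) \ge_P x$ (the only element covering $\ell$ is $x$), whence $I$ contains $x$ and therefore all of its $\ge 3$ children, together with $L_{n-3}^{-1}(1)$, which is not among them (no other child of $x$ lies above $\ell$). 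That is at least four elements, contradicting $|I| = 3$.

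Finally I would assemble the count: for each minimal $\ell \in R \cup T$ there are $(n-1)!$ labelings with $L(x_0) = n-1$, and by the reduction above the fraction of them with $L_{n-3}^{-1}(2) <_P L_{n-3}^{-1}(1)$ equals $\Prob(1 \in L_{n-3}(A_2'))$, which \Cref{helper} with $K = \{1\}$ evaluates to $\prod_{j=2}^{\omega(\ell)} \tfrac{b_{\ell,j}-1}{c_{\ell,j}-1}$; summing over $\ell \in T$ and $\ell \in R$ (disjoint subsets of $M$, and with the $M \setminus (R \cup S \cup T)$ part contributing $0$) gives the stated formula. The main obstacle is the structural step of the second paragraph — verifying that for $\ell \in R \cup T$ the vertex $u_{\ell,1}$ has $u_{\ell,0}$ as its unique child in $Q$, so that $A_2'$ collapses to $\{\ell\} \cup \varphi^{-1}(u_{\ell,1})$ and the inversion event is literally the event to which \Cref{helper} applies — together with the bookkeeping ruling out the minimal elements outside $R \cup S \cup T$.
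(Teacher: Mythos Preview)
Your proposal is correct and follows essentially the same route as the paper. Both arguments fix $x_0=\ell=L^{-1}(n-1)$, observe that $L_{n-3}^{-1}(2)=\ell$, reduce the inversion condition $L_{n-3}^{-1}(1)>_P\ell$ to the event $L_{n-3}^{-1}(1)\in A_2'$, and then invoke \Cref{helper} with $K=\{1\}$. The paper dismisses the case $\ell\notin R\cup S\cup T$ by citing \Cref{lower order ideal of size 3} directly (any such $\ell$ cannot sit in a size-3 non-antichain lower order ideal together with its parent), while you argue this by hand via the ``$x$ has at least three children'' contradiction; these are equivalent. You are also more explicit than the paper about why $L_{n-3}^{-1}(2)=\ell$ and about the structure of $A_2'$.

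One small point worth tightening: in your equivalence $\{L_{n-3}^{-1}(1)\in A_2'\}\Longleftrightarrow\{L_{n-3}^{-1}(1)>_P\ell\}$, your description of $A_2'=\{\ell\}\cup\varphi^{-1}(u_{\ell,1})$ only immediately yields the forward direction. The converse needs the observation that $\{L_{n-3}^{-1}(1),L_{n-3}^{-1}(2),L_{n-3}^{-1}(3)\}$ is a lower order ideal (by \Cref{frozen elements}), so $L_{n-3}^{-1}(1)$ can have at most two elements below it; combined with the reducedness assumption of \Cref{reduced} on $Q$ minus its leaves, this forces $L_{n-3}^{-1}(1)\in\varphi^{-1}(u_{\ell,1})$. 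The paper asserts this equivalence without proof as well, so you are not missing anything relative to the published argument.
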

\begin{proof}
	Begin by noting that $ L_{n-3}\inverse(2)=L\inverse(n-1) $. Because $ L\inverse(n-1) $ is minimal, \Cref{lower order ideal of size 3} gives us that $ L\inverse(n-1)\in T $, $ L\inverse(n-1)\in R $, or $ L\inverse(n-1)\in S $. 
	
	Case (1): Assume $ L\inverse(n-1)\in T $. We would like to compute the probability that $ L_{n-3}\inverse(1)>_P L\inverse(n-1)=L_{n-3}\inverse(2) $. Note that this event occurs if and only if $ L_{n-3}\inverse(1)\in A_2' $. Applying \Cref{helper} with $ x_0=\ell=L\inverse(n-1) $, we see that this probability is just \[\prod_{j=2}^{\omega(\ell)}\frac{b_{\ell,j}-1}{c_{\ell,j}-1}.\] Summing over the minimal elements in $ T $, we get the summation corresponding to $ T $ in the formula. 
	
	Case (2): An analogous argument works for $ L\inverse(n-1)\in R $. The lemma follows.
\end{proof}
\begin{lemma}\label{L^-1(n) minimal}
	With notation as in \Cref{quasi tangled enumeration}, the number of labelings $ L $ of $ P $ with $ L\inverse(n) $ minimal and $ L_{n-3}\inverse(3)<_PL_{n-3}\inverse(1) $ or $ L_{n-3}\inverse(3)<_PL_{n-3}\inverse(2) $ is \begin{equation*}
		\resizebox{\textwidth}{!} 
		{
			$(n-1)!\left(2\displaystyle\sum_{\ell\in T}\prod_{j=2}^{\omega(\ell)}\frac{b_{\ell,j}-1}{c_{\ell,j}-1}+2\sum_{\ell\in R}\prod_{j=2}^{\omega(\ell)}\frac{b_{\ell,j}-1}{c_{\ell,j}-1}\\
			+\sum_{\ell\in S}\prod_{j=2}^{\omega(\ell)}\frac{(b_{\ell,j}-1)(b_{\ell,j}-2)}{(c_{\ell,j}-1)(c_{\ell,j}-2)}-\sum_{\ell\in T}\prod_{j=2}^{\omega(\ell)}\frac{(b_{\ell,j}-1)(b_{\ell,j}-2)}{(c_{\ell,j}-1)(c_{\ell,j}-2)}\right).$
		}
	\end{equation*}
\end{lemma}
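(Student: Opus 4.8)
The plan is to fix, for each minimal element $x_0$ of $P$, the $(n-1)!$ labelings $L$ with $L(x_0)=n$, to compute the fraction of them realizing the stated inversion by running the relevant configurations through \Cref{A2} and \Cref{helper}, and then to sum over $x_0$; the argument parallels that of \Cref{L^-1(n-1) minimal} but must now track two labels. First I would note that since $x_0$ is minimal and $L(x_0)=n>1$, the element $x_0$ lies in no promotion chain during the first $n-3$ applications of $\partial$ (a minimal element enters a promotion chain only as its bottom element $L_\gamma\inverse(1)$, which forces $L_\gamma(x_0)=1$), so $L_{n-3}(x_0)=3$; hence the event in the statement is exactly ``$x_0<_PL_{n-3}\inverse(1)$ or $x_0<_PL_{n-3}\inverse(2)$.'' By \Cref{frozen elements} the set $\{L_{n-3}\inverse(1),L_{n-3}\inverse(2),x_0\}$ is always a lower order ideal of size $3$; when the event holds it is not an antichain, so it is one of the three shapes recorded in the proof of \Cref{L^-1(n) position}, and consequently $x_0\in R\cup S\cup T$: if $x_0\notin R\cup S\cup T$, then $x_0$ together with its parent lies in no non-antichain lower order ideal of size $3$, so no element above $x_0$ can sit with $x_0$ in a size-$3$ lower order ideal, and the event is impossible. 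Thus it suffices to treat $x_0=\ell$ with $\ell\in T$, $\ell\in R$, or $\ell\in S$ and add the contributions.

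For each case I would unwind the set $A_2'=\varphi\inverse(\{v\leq_Q u_{\ell,1}\})$ and reduce the inversion event to an event about where the labels $1$ and $2$ land. When $\ell\in T$, the definition of $T$ together with the reducedness normalization of \Cref{reduced} forces $u_{\ell,1}$ to have $u_{\ell,0}$ as its only child in $Q$ and $|\varphi\inverse(u_{\ell,1})|\ge 2$, so $A_2'=\{\ell\}\cup\varphi\inverse(u_{\ell,1})$ has $\ell$ as its unique minimal element; the size-$3$ lower-ideal constraint then confines any of $L_{n-3}\inverse(1),L_{n-3}\inverse(2)$ lying above $x_0$ to the two elements of $\varphi\inverse(u_{\ell,1})$ immediately above $\ell$, so for $r\in\{1,2\}$ the events ``$x_0<_PL_{n-3}\inverse(r)$'' and ``$L_{n-3}\inverse(r)\in A_2'$'' coincide, and they need not be disjoint. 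When $\ell\in R$, one checks that $A_2'=\{\ell,x\}$ with $x$ the unique parent of $\ell$, so ``$x_0<_PL_{n-3}\inverse(r)$'' means ``$L_{n-3}\inverse(r)=x$,'' and the two events for $r=1$ and $r=2$ are disjoint. When $\ell\in S$, the other child $w$ of $x$ is a minimal element of $P$, and the size-$3$ lower-ideal constraint shows that $\{1,2\}\subseteq L_{n-3}(A_2')$ holds exactly when $\{L_{n-3}\inverse(1),L_{n-3}\inverse(2)\}=\{x,w\}$, which is precisely the inversion event.

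With these reductions, \Cref{A2} transfers each $A_2'$-event on $P$ to the corresponding $A_2$-event on $\tilde P=P\setminus\{x_0\}$, and \Cref{helper} (built from \Cref{helper 1} and \Cref{conditional helper}) gives $\Prob\!\left(L_{n-3}\inverse(r)\in A_2'\right)=\prod_{j=2}^{\omega(\ell)}\frac{b_{\ell,j}-1}{c_{\ell,j}-1}$ for a single label and $\Prob\!\left(\{1,2\}\subseteq L_{n-3}(A_2')\right)=\prod_{j=2}^{\omega(\ell)}\frac{(b_{\ell,j}-1)(b_{\ell,j}-2)}{(c_{\ell,j}-1)(c_{\ell,j}-2)}$. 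For $\ell\in T$, inclusion--exclusion over the two overlapping events yields $2\prod_{j=2}^{\omega(\ell)}\frac{b_{\ell,j}-1}{c_{\ell,j}-1}-\prod_{j=2}^{\omega(\ell)}\frac{(b_{\ell,j}-1)(b_{\ell,j}-2)}{(c_{\ell,j}-1)(c_{\ell,j}-2)}$; for $\ell\in R$ the two disjoint events yield $2\prod_{j=2}^{\omega(\ell)}\frac{b_{\ell,j}-1}{c_{\ell,j}-1}$; for $\ell\in S$ the single event yields $\prod_{j=2}^{\omega(\ell)}\frac{(b_{\ell,j}-1)(b_{\ell,j}-2)}{(c_{\ell,j}-1)(c_{\ell,j}-2)}$. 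Multiplying each by $(n-1)!$ and summing over $\ell\in T$, $\ell\in R$, and $\ell\in S$ reproduces the formula in the statement.

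The main obstacle is the middle step: checking case by case that ``$x_0<_PL_{n-3}\inverse(r)$'' is genuinely equivalent to the clean $A_2'$-event (and, for $\ell\in S$, that the correct event is the two-label one). This rests on an exact description of the non-antichain lower order ideals of size $3$ that can occur in an inflated rooted tree with deflated leaves, on matching the $R/S/T$ trichotomy of minimal elements to those shapes, and on the reducedness normalization of \Cref{reduced} (which is precisely what excludes the degenerate possibility $|\varphi\inverse(u_{\ell,1})|=1$ when $\ell\in T$); getting the inclusion--exclusion bookkeeping right---in particular knowing exactly when the two events ``$x_0<_PL_{n-3}\inverse(1)$'' and ``$x_0<_PL_{n-3}\inverse(2)$'' are disjoint, overlapping, or identical---is the delicate point.
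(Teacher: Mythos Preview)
Your proposal is correct and follows essentially the same approach as the paper's proof: fix $x_0=L^{-1}(n)$ minimal (so $L_{n-3}(x_0)=3$), observe via \Cref{lower order ideal of size 3} that the inversion event forces $x_0\in R\cup S\cup T$, reduce the event in each case to an $A_2'$-membership event for one or two labels via \Cref{A2}, compute the probabilities with \Cref{helper}, and use inclusion--exclusion for the $T$ case. The paper's version is terser---it simply says the $R$ and $T$ cases are ``nearly identical'' to \Cref{L^-1(n-1) minimal} and then subtracts the double-counted $T$-overlap---whereas you spell out explicitly why the two single-label events are disjoint for $R$ (because $A_2'=\{\ell,x\}$), overlapping for $T$, and why the $S$ case is a genuine two-label event; this extra case analysis is exactly the ``delicate point'' you flag, and your treatment of it is sound (in particular, the reducedness assumption of \Cref{reduced} does force $|\varphi^{-1}(u_{\ell,1})|\ge 2$ when $\ell\in T$, and forces the other child to be minimal when $\ell\in S$).
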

\begin{proof}
	Recall that \Cref{lower order ideal of size 3} implies that $ L\inverse(n) $ is in either $ R $, $ S $, or $ T $. When $ L\inverse(n) $ is in $ R $ or $ T $, the process of counting the number of such labelings (where $ L\inverse(n) $ is minimal and $ L_{n-3}\inverse(3)<_PL_{n-3}\inverse(1) $ or $ L_{n-3}\inverse(3)<_PL_{n-3}\inverse(2) $) is nearly identical to the one used in the proof of \Cref{L^-1(n-1) minimal} (just apply \Cref{helper}). However, if $ L\inverse(n)\in T $, it is possible that $ L_{n-3}\inverse(3)<_PL_{n-3}\inverse(1) $ \emph{and} $ L_{n-3}\inverse(3)<_PL_{n-3}\inverse(2) $; we are twice-counting such labelings. To count the labelings where $ L\inverse(n)\in T $, $ L_{n-3}\inverse(3)<_PL_{n-3}\inverse(1) $, and $ L_{n-3}\inverse(3)<_PL_{n-3}\inverse(2) $, we apply \Cref{helper}. Thus, for $ L\inverse(n)\in R\cup T $, there are \[(n-1)!\left(2\sum_{\ell\in T}\prod_{j=2}^{\omega(\ell)}\frac{b_{\ell,j}-1}{c_{\ell,j}-1}+2\sum_{\ell\in R}\prod_{j=2}^{\omega(\ell)}\frac{b_{\ell,j}-1}{c_{\ell,j}-1}-\sum_{\ell\in T}\prod_{j=2}^{\omega(\ell)}\frac{(b_{\ell,j}-1)(b_{\ell,j}-2)}{(c_{\ell,j}-1)(c_{\ell,j}-2)}\right)\] labelings where $ L_{n-3}\inverse(3)<_PL_{n-3}\inverse(1) $ or $ L_{n-3}\inverse(3)<_PL_{n-3}\inverse(2) $. The term being subtracted in the above expression is the number of labelings with  $ L\inverse(n)\in T $, $ L_{n-3}\inverse(3)<_PL_{n-3}\inverse(1) $, and $ L_{n-3}\inverse(3)<_PL_{n-3}\inverse(2) $.
	
	However, the process changes when $ L\inverse(n)\in S $. Let $ m $ be the other element covered by the parent of $ L\inverse(n) $; let $ Y $ be the lower order ideal of size 3 consisting of $ L\inverse(n)$, $ m $, and their parent. Note that we must have $ Y=\{L_{n-3}\inverse(1),L_{n-3}\inverse(2),L_{n-3}\inverse(3)\} $. Setting $ L\inverse(n)=x_0=\ell $, with notation as in \Cref{helper}, we have that $ Y=\{L_{n-3}\inverse(1),L_{n-3}\inverse(2),L_{n-3}\inverse(3)\} $ if and only if $ L_{n-3}\inverse(1) $ and $ L_{n-3}\inverse(2) $ are in $ A_2' $. By \Cref{helper}, the probability that both $ L_{n-3}\inverse(1) $ and $ L_{n-3}\inverse(2) $ are in $ A_2' $ is \[\prod_{j=2}^{\omega(\ell)}\frac{(b_{\ell,j}-1)(b_{\ell,j}-2)}{(c_{\ell,j}-1)(c_{\ell,j}-2)}.\] The formula follows from summing over all elements in $ S $.
\end{proof}
\begin{lemma}\label{L^-1(n) covers min element}
	With notation as in \Cref{quasi tangled enumeration}, the number of labelings $ L $ where $ L\inverse(n) $ covers a minimal element and $ L_{n-3}\inverse(1)<_PL\inverse(n)<_PL_{n-3}\inverse(2) $ or $ L_{n-3}\inverse(2)<_PL\inverse(n)<_PL_{n-3}\inverse(1) $ is \[(n-1)!\left(\sum_{m\in T}\prod_{j=2}^{\omega(m)}\frac{(b_{m,j}-1)(b_{m,j}-2)}{(c_{m,j}-1)(c_{m,j}-2)}\right).\]
\end{lemma}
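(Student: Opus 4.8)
The plan is to follow, with one extra layer, the template used for the case $L\inverse(n)\in S$ in \Cref{L^-1(n) minimal}: first pin down the local picture in $P$ that the stated inequalities force after $n-3$ promotions, then recognize the relevant labelings as exactly those for which both labels $1$ and $2$ have been ``pulled down'' into $A_2'$, and finally read off the count from \Cref{helper} with $K=\{1,2\}$.

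First I would determine the local structure. Let $L$ be a labeling with $w:=L\inverse(n)$ covering a minimal element and, say, $L_{n-3}\inverse(1)<_Pw<_PL_{n-3}\inverse(2)$ (the reversed inequality is symmetric). By \Cref{Swapnil lemma}, iterated, $w=L_0\inverse(n)\geq_PL_{n-3}\inverse(3)$, and since $w<_PL_{n-3}\inverse(2)$ while $\{L_{n-3}\inverse(1),L_{n-3}\inverse(2),L_{n-3}\inverse(3)\}$ is a lower order ideal of size $3$ (\Cref{frozen elements}), it follows that $w$ lies in this ideal; being strictly above $L_{n-3}\inverse(1)$ and strictly below $L_{n-3}\inverse(2)$, it must equal $L_{n-3}\inverse(3)$. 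Setting $\ell:=L_{n-3}\inverse(1)$ and $z:=L_{n-3}\inverse(2)$, the chain $\ell<_Pw<_Pz$ being a lower order ideal of size $3$ forces $\ell\lessdot w\lessdot z$ with $w$ covering only $\ell$ and $z$ covering only $w$; hence $\ell$ is minimal, $\ell\in T$, and $w$ is the parent of $\ell$. Conversely, $L\inverse(n)$ can play this role only when it is the parent of some $m\in T$, and since that parent covers only $m$, distinct $m\in T$ yield disjoint families of labelings. So it suffices to count, for each fixed $m\in T$ with $x_0:=$ the parent of $m$, the labelings $L$ with $L(x_0)=n$ that produce this configuration, and then sum over $m$.

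Next I would reduce that count to \Cref{helper}. Fix $m\in T$, put $x_0:=$ the parent of $m$, set $\ell:=m$, and let $A_2'$ be as in the setup before \Cref{A2}. The key claim is that, among the $(n-1)!$ labelings with $L(x_0)=n$, the above configuration holds if and only if $L_{n-3}\inverse(1)\in A_2'$ and $L_{n-3}\inverse(2)\in A_2'$. For the forward direction one has $\{L_{n-3}\inverse(1),L_{n-3}\inverse(2)\}=\{m,z\}$ with $z$ the parent of $x_0$; here $m\in A_2'$ and $x_0\in A_2'$ are immediate, and $z\in A_2'$ comes from the reducedness hypothesis (\Cref{reduced}): were $\vphi(z)\neq\vphi(x_0)$, then $\vphi(x_0)$, a non-leaf of $Q$, would be a child of $\vphi(z)$ in the reduced tree obtained by deleting the leaves of $Q$, so $\vphi(z)$ would have a second child $v'$ there; the minimal element of $\vphi\inverse(v')$ then lies strictly below $z$ and, since $z$ covers only $x_0$ and $\vphi(x_0)\neq v'$, strictly below $x_0$; since $x_0$ covers only $m$, it equals $m$, contradicting that it maps to $v'\neq\vphi(m)$. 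For the converse, if $L_{n-3}\inverse(1)\in A_2'$ then by the proof of \Cref{A2} the element $x_0$ enters none of the first $n-3$ promotion chains, so $L_{n-3}\inverse(3)=x_0$; since every element of $A_2'$ is either $m$ or lies above $x_0$, and $\{L_{n-3}\inverse(1),L_{n-3}\inverse(2),x_0\}$ is a lower order ideal of size $3$, labels $1$ and $2$ cannot both land above $x_0$ (that would pull $m$ into the ideal), so one of them is $m$ and the other, lying above $x_0$ and completing a size-$3$ lower order ideal with $\{m,x_0\}$, can only be the parent of $x_0$; that is the configuration. Granting the claim, \Cref{helper} with $a=n$ and $K=\{1,2\}$ gives that exactly $(n-1)!\prod_{j=2}^{\omega(m)}\frac{(b_{m,j}-1)(b_{m,j}-2)}{(c_{m,j}-1)(c_{m,j}-2)}$ of these labelings realize the configuration; summing over $m\in T$, using the disjointness, produces the claimed formula.

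I expect the crux to be the backward direction of the claim in the second step. One must invoke \Cref{A2} to guarantee that $x_0$ never joins a promotion chain, so that $L_{n-3}\inverse(3)=x_0$, and then combine the lower-order-ideal structure of $\{L_{n-3}\inverse(1),L_{n-3}\inverse(2),L_{n-3}\inverse(3)\}$ with reducedness of $Q$ minus its leaves to show that ``both of labels $1$ and $2$ lie in $A_2'$'' is not only necessary but also sufficient for the precise configuration; the delicate point is that $A_2'$ may properly contain $\{m,x_0,\text{parent of }x_0\}$, so one has to exclude the possibility that labels $1$ and $2$ occupy the ``wrong'' elements of $A_2'$. The remaining bookkeeping is a direct repetition of what was done in \Cref{L^-1(n-1) minimal} and \Cref{L^-1(n) minimal}.
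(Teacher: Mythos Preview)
Your proposal is correct and follows the same strategy as the paper's proof: identify that the stated configuration forces $\ell\in T$ and $L_{n-3}\inverse(3)=x_0$, reduce the event to ``both $L_{n-3}\inverse(1)$ and $L_{n-3}\inverse(2)$ lie in $A_2'$,'' and then apply \Cref{helper} with $K=\{1,2\}$ and sum over $m\in T$. The paper's own argument is considerably terser---it simply asserts that $L_{n-3}\inverse(1)$ and $L_{n-3}\inverse(2)$ are comparable to $x_0$ if and only if they are in $A_2'$---whereas you supply the details (via \Cref{A2}, the lower-order-ideal structure, and the reducedness assumption from \Cref{reduced}) that make this equivalence rigorous.
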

\begin{proof}
	Let $ x_0=L\inverse(n) $, and let notation be as in \Cref{helper} so that $ L\inverse(n) $ covers some minimal element $ \ell$. Note that $ \ell\in T $ since $ L_{n-3}\inverse(1), $ $ L_{n-3}\inverse(2) $, and $ L_{n-3}\inverse(3)=L\inverse(n) $ form a lower order ideal. Moreover, note that $ L_{n-3}\inverse(1) $ and $ L_{n-3}\inverse(2) $ are comparable to $ x_0 $ if and only if they are in $ A_2' $. Hence, we may apply \Cref{helper} to see that the probability both $ L_{n-3}\inverse(1) $ and $ L_{n-3}\inverse(2) $ are comparable to $ x_0 $ is \[\prod_{j=2}^{\omega(m)}\frac{(b_{m,j}-1)(b_{m,j}-2)}{(c_{m,j}-1)(c_{m,j}-2)}.\] Summing over all the elements in $ T $ will imply the lemma. 
\end{proof}

\begin{proof}[Proof of \Cref{quasi tangled enumeration}]
	We begin by counting the number of tangled labelings of $ P $. By Lemma 3.8 in \cite{DK20}, if $ L $ is a tangled labeling of an $ n $-element poset, then $ L\inverse(n) $ is minimal. Moreover, a labeling is tangled if and only if $ L_{n-2}\inverse(1)>_PL_{n-2}\inverse(2) $. If $ L $ is tangled, then it follows that $ L\inverse(n)\in R\cup T $, since $ \{L_{n-2}\inverse(1),L_{n-2}\inverse(2)\} $ forms a lower order ideal and since $ L_{n-2}\inverse(1)>_PL_{n-2}\inverse(2) $. Applying \Cref{helper}, we see that there are \begin{equation}\label{eq: tangled}
		(n-1)!\left(\sum_{\ell\in T}\prod_{j=2}^{\omega(\ell)}\frac{b_{\ell,j}-1}{c_{\ell,j}-1}+\sum_{\ell\in R}\prod_{j=2}^{\omega(\ell)}\frac{b_{\ell,j}-1}{c_{\ell,j}-1}\right)
	\end{equation} tangled labelings. 
	
	Now, we enumerate the labelings $ L $ such that $ L_{n-3}\not\in\call(P) $. \Cref{L^-1(n) position} tells us that if $ L_{n-3}\not\in\call(P) $, then either $ L\inverse(n-1) $ is minimal, $ L\inverse(n) $ is minimal, or $ L\inverse(n) $ covers a minimal element. Moreover, we know that $ Y=\{L_{n-3}\inverse(1),L_{n-3}\inverse(2),L_{n-3}\inverse(3)\} $ forms a lower order ideal of size 3, and $ L_{n-3} $ restricted to $ Y $ is not a linear extension. We condition on the three cases given by \Cref{L^-1(n) position}.
	
	Case (1): We count the labelings $ L $ such that $ L\inverse(n-1) $ is minimal, $ L\inverse(n-1)\not\in S $, and $ L_{n-3}\inverse(1)>_PL_{n-3}\inverse(2) $. By \Cref{L^-1(n-1) minimal}, there are \begin{equation}\label{eq: L(n-1)}
		(n-1)!\left(\sum_{\ell\in T}\prod_{j=2}^{\omega(\ell)}\frac{b_{\ell,j}-1}{c_{\ell,j}-1}+\sum_{\ell\in R}\prod_{j=2}^{\omega(\ell)}\frac{b_{\ell,j}-1}{c_{\ell,j}-1}\right)
	\end{equation} such labelings. If $ L\inverse(n-1)\in S $, because $ \{L_{n-3}\inverse(1),L_{n-3}\inverse(2),L_{n-3}\inverse(3)\} $ is a lower order ideal, it follows from the definition of $ S $ that $ L_{n-3}\inverse(1) $ must be the unique parent of both $ L_{n-3}\inverse(2) $ and $ L_{n-3}\inverse(3) $. Now, repeatedly applying \Cref{inversion} to $ L_{n-3}\inverse(1) $ and $ L_{n-3}\inverse(3) $ tells us the position of $ L\inverse(n) $, namely that $ L\inverse(n)=L_{n-3}\inverse(3)\in S $. Thus, this subcase can be excluded and will be addressed in Case (2) when we assume $ L\inverse(n) $ is minimal. 
	
	Case (2): We count of labelings $ L $ such that $ L\inverse(n) $ is minimal and $ L_{n-3}\inverse(3)<_PL_{n-3}\inverse(1) $ or $ L_{n-3}\inverse(3)<_PL_{n-3}\inverse(2) $. By \Cref{L^-1(n) minimal}, there are \begin{equation}\label{eq: L(n)}
		\resizebox{.94 \textwidth}{!} 
		{
			$(n-1)!\left(2\displaystyle\sum_{\ell\in T}\prod_{j=2}^{\omega(\ell)}\frac{b_{\ell,j}-1}{c_{\ell,j}-1}+2\sum_{\ell\in R}\prod_{j=2}^{\omega(\ell)}\frac{b_{\ell,j}-1}{c_{\ell,j}-1}\\
			+\sum_{\ell\in S}\prod_{j=2}^{\omega(\ell)}\frac{(b_{\ell,j}-1)(b_{\ell,j}-2)}{(c_{\ell,j}-1)(c_{\ell,j}-2)}-\sum_{\ell\in T}\prod_{j=2}^{\omega(\ell)}\frac{(b_{\ell,j}-1)(b_{\ell,j}-2)}{(c_{\ell,j}-1)(c_{\ell,j}-2)}\right)$
		}
	\end{equation} such labelings.
	
	Case (3): We count the labelings $ L $ such that $ L\inverse(n) $ covers a minimal element $ \ell $, $ L_{n-3}\inverse(2)>_PL_{n-3}\inverse(3) $ or $ L_{n-3}\inverse(1)>_PL_{n-3}\inverse(3) $, and $ L(\ell)\neq n-1 $. (The case where $ L(\ell)=n-1 $ and $ L_{n-3}\inverse(1)>_PL_{n-3}\inverse(3)>_PL_{n-3}\inverse(2) $ was counted in Case (1).) We first count the labelings $ L $ such that  $ L_{n-3}\inverse(2)>_PL_{n-3}\inverse(3) $ or $ L_{n-3}\inverse(1)>_PL_{n-3}\inverse(3) $. Since $ Y $ is a lower order ideal of size 3, it follows that $ \ell\in Y $. Thus, we may assume that $ \ell\in T $. Hence, it is sufficient to count the labelings $ L $ such that $ L\inverse(n) $ covers some $ \ell\in T $ and $ L_{n-3}\inverse(1)<_PL\inverse(n)<_PL_{n-3}\inverse(2) $ or $ L_{n-3}\inverse(2)<_PL\inverse(n)<_PL_{n-3}\inverse(1) $. We have already done this---the number of such labelings is given in \Cref{L^-1(n) covers min element}. Note that each such labeling is indeed quasi-tangled. To account for the condition $ L(\ell)\neq n-1 $, we enumerate the labelings with $ L\inverse(n-1)\in T $, $ L\inverse(n-1)\lessdot_PL\inverse(n) $, and $ L_{n-3}\inverse(1)>_P L_{n-3}\inverse(3) $. An adaptation of \Cref{helper} allows us to enumerate these labelings, the number of which is given by the term being subtracted in the following expression: \begin{equation}\label{eq: L(n) covers}
		(n-1)!\left(\sum_{m\in T}\prod_{j=2}^{\omega(m)}\frac{(b_{m,j}-1)(b_{m,j}-2)}{(c_{m,j}-1)(c_{m,j}-2)}-\frac{1}{n-1}\sum_{m\in T}\prod_{j=2}^{\omega(m)}\frac{(b_{m,j}-2)}{(c_{m,j}-2)}\right).
	\end{equation} Note that the above enumerates the labelings $ L $ such that $ L\inverse(n) $ covers a minimal element $ \ell $, $ L_{n-3}\inverse(2)>_PL_{n-3}\inverse(3) $ or $ L_{n-3}\inverse(1)>_PL_{n-3}\inverse(3) $, and $ L(\ell)\neq n-1 $.
	
	Summing \eqref{eq: L(n-1)}, \eqref{eq: L(n)}, and \eqref{eq: L(n) covers} and subtracting \eqref{eq: tangled} gives that the number of quasi-tangled labelings of $ P $ is given by \begin{equation*}\resizebox{\textwidth}{!} 
		{
			$(n-1)!\left(2\displaystyle\sum_{\ell\in T}\prod_{j=2}^{\omega(\ell)}\frac{b_{\ell,j}-1}{c_{\ell,j}-1}-\frac{1}{n-1}\sum_{\ell\in T}\prod_{j=2}^{\omega(\ell)}\frac{b_{\ell,j}-2}{c_{\ell,j}-2}+2\sum_{\ell\in R}\prod_{j=2}^{\omega(\ell)}\frac{b_{\ell,j}-1}{c_{\ell,j}-1}+\sum_{\ell\in S}\prod_{j=2}^{\omega(\ell)}\frac{(b_{\ell,j}-1)(b_{\ell,j}-2)}{(c_{\ell,j}-1)(c_{\ell,j}-2)}\right)$
		}
	\end{equation*} as desired. 
\end{proof}

\section{Enumerating Labelings of Rooted Trees with Sorting Time $ n-1-k $}\label{sorting time n-k-1}
In light of \Cref{L^-1(n) position} and the Probability Lemma (\Cref{probability lemma}), it is natural to ask if the methods used in \Cref{Quasi-Tangled} can be extended to enumerate the labelings of an $ n $-element poset $ P $ with sorting time $ n-1-k $. In the following, we give an algorithmic approach for doing so when $ P $ is a rooted tree poset. While, theoretically, this approach could yield a general formula for the labelings with sorting time $ n-1-k $, any such formula would be much too complicated to be practical. Instead, for a fixed $ k $, we offer an algorithmic approach to enumerating the labelings with sorting time $ n-1-k $. Using this method, it would be possible to write a computer program that computes the number of such labelings for a fixed poset.

In order to do so, we first note that \Cref{L^-1(n) position} generalizes. In particular, for a fixed $ k $, one can prove that if $ L $ has sorting time $ n-1-k $, then one of the following holds:
\begin{itemize}
	\item $ L\inverse(n-k) $ is minimal;
	\item $ L\inverse(n-k+1) $ is minimal or covers a minimal element;
	\item $ L\inverse(n-k+2) $ is minimal, covers a minimal element, or is greater than exactly 2 other elements;
	\subitem\vdots
	\item $ L\inverse(n) $ is greater than at most $ k $ other elements. 
\end{itemize}

\begin{Algorithm}
	Let $ P $ be an $ n $-element rooted tree poset. Then we may enumerate the labelings $ L $ of $ P $ with sorting time $ n-1-k $ in the following way:
	\begin{enumerate}
		\item List the possible lower order ideals of size $ k $ other than antichains appearing in $ P $.
		\item For each such lower order ideal occurring in $ P $, use (1) and the Probability Lemma to count the labelings with sorting time $ n-1-k $. This will involve lots of casework based on the positions of $ L\inverse(n-k),\ldots,L\inverse(n) $ and lower order ideals of size $ k $ occurring in $ P $. The proof of \Cref{quasi tangled enumeration} illustrates this casework for the case $ k=1 $ in full generality.  
	\end{enumerate}
\end{Algorithm}

\section{Tangled Labelings of Posets}\label{Tangled}
In \cite{DK20}, Defant and Kravitz conjectured that any $ n $-element poset has at most $ (n-1)! $ tangled labelings (see \Cref{conj: (n-1)!}). This is not obvious even for classes of posets for which we can explicitly enumerate the tangled labelings (e.g., \Cref{tangled enumeration}).

We can prove the conjecture for inflated rooted forests. The following results from \cite{DK20} will be useful:
\begin{corollary}[\cite{DK20}, Corollary 3.7]\label{cor:tangled unique minimal element}
	Let $ P $ be an $ n $-element poset with $ r $ connected components, each having a unique minimal element. Then the number of tangled labelings of $ P $ is \[(n-r)(n-2)!.\]
\end{corollary}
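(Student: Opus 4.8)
The plan is to partition the tangled labelings of $P$ according to which minimal element carries the label $n$, and to show that each block has size $(n_i-1)(n-2)!$; here $P=P_1\sqcup\cdots\sqcup P_r$ is the decomposition into connected components, $n_i=|P_i|$, and $m_i$ denotes the unique minimal element of $P_i$, so that $\sum_i(n_i-1)(n-2)!=(n-r)(n-2)!$. First I would record two reductions: since $\partial^{n-1}$ sorts every labeling (\Cref{sorting}), $L$ is tangled iff $L_{n-2}=\partial^{n-2}(L)\notin\call(P)$; and by Lemma~3.8 of \cite{DK20} a tangled $L$ has $L\inverse(n)$ minimal, hence equal to some $m_i$. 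So it suffices, for each fixed $i$, to count the tangled $L$ among the $(n-1)!$ labelings with $L(m_i)=n$.

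Next I would turn tangledness into a condition on a single element. Because $m_i$ is minimal it is never an $L_\gamma$-successor, and since $L(m_i)=n$ its label drops by one at each of the first $n-2$ promotions; thus $m_i$ lies in no promotion chain of $L_0,\dots,L_{n-3}$ and $L_{n-2}(m_i)=2$. By \Cref{frozen elements}, $L_{n-2}\inverse(3),\dots,L_{n-2}\inverse(n)$ are frozen with respect to $L_{n-2}$, so $\{L_{n-2}\inverse(1),L_{n-2}\inverse(2)\}$ is a lower order ideal of size $2$, and the only possible failure of $L_{n-2}$ to be a linear extension is $L_{n-2}\inverse(2)=m_i$ lying strictly below $L_{n-2}\inverse(1)$. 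As $m_i$ is minimal, an element lies above $m_i$ exactly when it lies in $P_i$ (and $m_i$ lies below every element of $P_i$); hence, among labelings with $L(m_i)=n$, the labeling $L$ is tangled iff $L_{n-2}\inverse(1)\in P_i$.

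Now I would compute the probability of this event. Choosing $L$ uniformly among the $(n-1)!$ labelings with $L(m_i)=n$ is the same as choosing a uniformly random labeling $\tilde L=\st(L|_{\tilde P})$ of $\tilde P:=P\setminus\{m_i\}$, a poset on $n-1$ elements. Since $m_i$ avoids the first $n-2$ promotion chains, \Cref{standardization} gives $\st(L_{n-2}|_{\tilde P})=\tilde L_{n-2}$, and as $L_{n-2}(m_i)=2$ the element bearing the label $1$ is not affected: $L_{n-2}\inverse(1)=\tilde L_{n-2}\inverse(1)$; moreover $\tilde L_{n-2}=\partial^{n-2}(\tilde L)$ is a linear extension of $\tilde P$ by \Cref{sorting}, since $|\tilde P|=n-1$. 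Because $P_i\setminus\{m_i\}$ is a union of connected components of $\tilde P$ of total size $n_i-1$, the desired probability $(n_i-1)/(n-1)$ follows from the claim: for a uniformly random labeling $\hat L$ of any $N$-element poset $R$ and any connected component $C$ of $R$, one has $\Prob[\partial^{N-1}(\hat L)\inverse(1)\in C]=|C|/N$.

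To prove the claim I would adjoin a new maximum $\hat 1$ to $R$, obtaining $R^+$ on $N+1$ elements. For a labeling $L^+$ of $R^+$ with $L^+(\hat 1)=N+1$, every promotion chain of $L^+$ is the corresponding promotion chain of $\hat L:=L^+|_R$ with $\hat 1$ appended (a chain must end at the unique maximum $\hat 1$; the element just below $\hat 1$ is maximal in $R$ and already terminates $\hat L$'s chain, receiving the label $|R|=N$ under promotion in $R$ and in $R^+$ alike), so $(L^+)_\gamma|_R=\hat L_\gamma$ and $(L^+)_\gamma(\hat 1)=N+1$ for all $\gamma$, whence $(L^+)_{N-1}\inverse(1)=\hat L_{N-1}\inverse(1)$. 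Now $R=\{y:y<_{R^+}\hat 1\}$, every element comparable with an element of $R$ is comparable with $\hat 1$, and $C\sqcup(R\setminus C)$ partitions $R$ into lower order ideals with no comparabilities between the two parts; the Probability Lemma (\Cref{probability lemma}) with $d=1$, applied with $x=\hat 1$, $X=R$, $A=C$, the map $\hat 1\mapsto N+1$, $k=N-1$, and $n_1=1$, then gives $\Prob[(L^+)_{N-1}\inverse(1)\in C]=|C|/|R|$, proving the claim. Assembling the pieces, the number of tangled $L$ with $L\inverse(n)=m_i$ is $(n-1)!\cdot(n_i-1)/(n-1)=(n_i-1)(n-2)!$, and summing over $i$ proves \Cref{cor:tangled unique minimal element}. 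I expect the main obstacle to be the component claim of the third paragraph: the rest is bookkeeping, but identifying the distribution of the label $1$ in the fully promoted labeling requires the adjoin-a-maximum device together with the routine but fiddly checks that promotion on $R^+$ restricts to promotion on $R$ and that standardizing away $m_i$ leaves the position of the label $1$ unchanged.
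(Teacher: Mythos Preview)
Your argument is correct, but note first that the paper does not itself prove this corollary: it is quoted from \cite{DK20} and used as a black box in the proof of \Cref{(n-1)!}. The proof implicit in the paper's framework is considerably shorter than yours. One first observes that a connected $n_i$-element poset $P_i$ with unique minimal element $m_i$ has exactly $(n_i-1)!$ tangled labelings: by Lemma~3.8 of \cite{DK20} every tangled labeling has $L^{-1}(n_i)=m_i$, and conversely any labeling with $L(m_i)=n_i$ satisfies $L_{n_i-2}(m_i)=2$ (as you compute) while $L_{n_i-2}^{-1}(1)$, being distinct from $m_i$, must lie strictly above the unique minimum $m_i$; hence $L_{n_i-2}\notin\call(P_i)$ and $L$ is tangled. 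One then feeds $t_i=(n_i-1)!$ into the component decomposition formula (Theorem~3.4 of \cite{DK20}, stated just before \Cref{reduction to connected case}) to obtain $(n-2)!\sum_i(n_i-1)=(n-r)(n-2)!$.

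Your route reaches the same endpoint by a genuinely different and heavier mechanism: rather than reducing to the connected case, where the conditional probability of tangledness given $L(m_i)=n_i$ is trivially $1$, you work directly in the disconnected poset $P$ and compute the probability $(n_i-1)/(n-1)$ that $L_{n-2}^{-1}(1)$ lands in $P_i$, via the adjoin-a-maximum trick and \Cref{probability lemma}. This is sound; your checks that promotion on $R^+$ restricts to promotion on $R$, and that standardizing away $m_i$ preserves the position of label $1$, are all correct, and your intermediate claim that $\Prob[\partial^{N-1}(\hat L)^{-1}(1)\in C]=|C|/N$ for any union of components $C$ is a pleasant standalone fact. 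But for this particular statement the machinery is overkill: the component decomposition theorem already separates the pieces, and within each piece there is nothing left to compute.
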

\begin{thm}[\cite{DK20}, Theorem 3.4]
	Let $ P $ be an $ n $-element poset with connected components $ P_1,\ldots,P_r $. Let $ n_i=|P_i| $, and let $ t_i $ denote the number of tangled labelings of $ P_i $. The number of tangled labelings of $ P $ is \[(n-2)!\sum_{i=1}^r\frac{t_i}{(n_i-2)!}.\]
\end{thm}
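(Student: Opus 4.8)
The plan is to reduce the count to the connected case via the following structural characterization, which I will call the \emph{Component Lemma}: a labeling $L$ of $P$ is tangled if and only if $L\inverse(n)$ and $L\inverse(n-1)$ lie in a common connected component $P_{i_0}$ and the standardization $\st(L|_{P_{i_0}})$ is a tangled labeling of $P_{i_0}$. Granting this, the theorem follows by a bookkeeping computation: to build such an $L$ one chooses the index $i_0$, then the label set $L(P_{i_0})\subseteq[n]$, which must contain both $n$ and $n-1$, leaving $\binom{n-2}{n_{i_0}-2}$ choices for the remaining labels of $P_{i_0}$; then a tangled labeling of $P_{i_0}$ using that label set ($t_{i_0}$ choices, since standardization is a bijection once the label set is fixed); and finally an arbitrary bijection from $\bigsqcup_{j\neq i_0}P_j$ to the remaining $n-n_{i_0}$ labels ($(n-n_{i_0})!$ choices). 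Summing over $i_0$ gives
\[
\sum_{i_0}t_{i_0}\binom{n-2}{n_{i_0}-2}(n-n_{i_0})!=(n-2)!\sum_{i_0}\frac{t_{i_0}}{(n_{i_0}-2)!},
\]
which is the claimed formula; components of size $1$ contribute nothing, consistent with the convention that their summand is $0$.

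So the work is in the Component Lemma, and I would set it up by first recalling two facts from \cite{DK20}: a labeling $L$ of an $n$-element poset is tangled if and only if $L_{n-2}\inverse(1)>_PL_{n-2}\inverse(2)$ (used in the proof of \Cref{quasi tangled enumeration}), and if $L$ is tangled then $w:=L\inverse(n)$ is minimal (\cite{DK20}, Lemma~3.8). Since $w$ is minimal it can enter a promotion chain only as its bottom vertex $v_1$, i.e.\ only when its label is $1$; hence $w$ stays out of the promotion chain for the first $n-2$ steps and $L_\gamma(w)=n-\gamma$ for $0\le\gamma\le n-2$. In particular $w=L_{n-2}\inverse(2)$, so the criterion above says $L_{n-2}\inverse(1)>_Pw$, and since $\{L_{n-2}\inverse(1),w\}$ is a two-element lower order ideal that is not an antichain, $L_{n-2}\inverse(1)$ is the unique parent of $w$ and so lies in $P_{i_0}$. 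Applying \Cref{Swapnil lemma} to the labels $n-1,n-2,\dots,2$ in turn yields the increasing chain $L_{n-2}\inverse(1)\le_PL_{n-3}\inverse(2)\le_P\cdots\le_PL_0\inverse(n-1)$ (each step because $L_\gamma\inverse(n-1-\gamma)\ge_PL_{\gamma+1}\inverse(n-2-\gamma)$), whose bottom element is in $P_{i_0}$; hence $L\inverse(n-1)\in P_{i_0}$ as well. This is the ``position'' half of the Component Lemma, proved much in the spirit of \Cref{L^-1(n) position}.

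The remaining and genuinely hard point is: once $L\inverse(n)$ and $L\inverse(n-1)$ lie in $P_{i_0}$, tangledness of $L$ on $P$ is equivalent to tangledness of $\st(L|_{P_{i_0}})$ on $P_{i_0}$. The natural device is to track at each step the \emph{active component} $P_{c(\gamma)}$ containing $L_\gamma\inverse(1)$: one application of $\partial$ affects only $P_{c(\gamma)}$ nontrivially, merely decrementing the labels in every other component, so $P_{i_0}$ effectively receives one promotion exactly when it is active. The claim to establish is that for tangled $L$ the component $P_{i_0}$ is active exactly $n_{i_0}-2$ times among the first $n-2$ steps—this is where $L\inverse(n-1)\in P_{i_0}$ is decisive, since it forces the schedule—so that $L_{n-2}$ fails to be a linear extension precisely when $\st(L|_{P_{i_0}})$ is still unsorted after $n_{i_0}-2$ promotions, i.e.\ precisely when $\st(L|_{P_{i_0}})$ is tangled. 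I expect this to be the main obstacle, because $\partial$ on $P$ does \emph{not} restrict to $\partial$ on a component: the global cyclic shift by $1$ and the global maximal label $n$ interfere with standardization, so $\st(L_\gamma|_{P_j})$ need not be the iterated promotion of $\st(L|_{P_j})$ with the ``right'' exponent, and likewise the frozen sets of $L_\gamma$ need not restrict to the component-wise frozen sets. Overcoming this calls for a careful argument—most plausibly organized around the frozen-element machinery (\Cref{frozen proper}, \Cref{frozen elements}), controlling how the frozen sets of the $L_\gamma$ meet each component and how their sizes grow in lockstep with the active-component schedule. An equivalent packaging is an induction on the number of components reducing to the case $r=2$ and analyzing a single merge, but that base case already carries the full subtlety.
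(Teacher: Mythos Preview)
The paper does not prove this theorem: it is quoted from \cite{DK20} as Theorem~3.4 and used only as a black box (in \Cref{reduction to connected case} and \Cref{(n-1)!}). There is therefore no in-paper proof to compare your proposal against.

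On its own merits, your outline is the natural one and matches the strategy in \cite{DK20}. The bookkeeping reduction to the Component Lemma is correct, and your argument placing $L\inverse(n)$ and $L\inverse(n-1)$ in the same component is valid. However, the obstacle you flag at the end is largely illusory. Standardization \emph{does} intertwine with promotion in the needed way: if $P_j$ is the component containing $L_\gamma\inverse(1)$, then the promotion chain of $L_\gamma$ lies entirely in $P_j$, and since both the chain and the label shifts depend only on relative order one checks directly that $\st((\partial L_\gamma)|_{P_j})=\partial\bigl(\st(L_\gamma|_{P_j})\bigr)$, while for every other component $P_i$ all labels drop by one and $\st((\partial L_\gamma)|_{P_i})=\st(L_\gamma|_{P_i})$. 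Consequently $\st(L_{n-2}|_{P_j})=\partial^{a_j}\bigl(\st(L|_{P_j})\bigr)$, where $a_j$ counts the steps at which $P_j$ is active; your worry that ``the global cyclic shift and the global maximal label $n$ interfere with standardization'' does not materialize. The remaining work is only to verify that, under the hypothesis $L\inverse(n),L\inverse(n-1)\in P_{i_0}$ with $L\inverse(n)$ minimal, the component $P_{i_0}$ is active exactly $n_{i_0}-2$ times in the first $n-2$ steps (equivalently, every other component $P_j$ is active $n_j$ times and ends fully frozen), after which the tangled criterion $L_{n-2}\inverse(2)<_PL_{n-2}\inverse(1)$ transfers verbatim to $P_{i_0}$. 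This is a short counting argument once the commutation above is in hand, not a genuine obstruction.
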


\begin{lemma}\label{reduction to connected case}
	Let $ P $, $ P_i $, $ n_i $, and $ t_i $ be defined as in the above for $ i=1,\ldots,r $. If there are at most $ (n_i-1)! $ tangled labelings of each $ P_i $, then there are at most $ (n-r)(n-2)! $ tangled labelings of $ P $.
\end{lemma}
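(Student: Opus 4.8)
The plan is to read the bound off directly from the formula of \cite{DK20} stated just above, namely that the number of tangled labelings of $P$ equals
\[
(n-2)!\sum_{i=1}^{r}\frac{t_i}{(n_i-2)!}.
\]
By hypothesis $t_i\le (n_i-1)!$ for every $i$, so each term satisfies $\frac{t_i}{(n_i-2)!}\le\frac{(n_i-1)!}{(n_i-2)!}=n_i-1$. Summing over $i$ and using $\sum_{i=1}^{r}n_i=n$ gives $\sum_{i=1}^{r}(n_i-1)=n-r$; multiplying back through by $(n-2)!$ then yields the desired bound $(n-r)(n-2)!$. That is essentially the entire argument: all of the content is already packaged into the cited product formula.

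The only point requiring care is that $(n_i-2)!$ is not literally defined when $n_i=1$. The clean way around this is to strip off all one-element components first: if $P$ has $s$ of them, apply the estimate above to the poset $P'$ obtained by deleting them — $P'$ has $n-s$ elements and $r-s$ components, each of size at least $2$ — and then observe that reinstating the $s$ isolated points creates no new tangled labelings beyond the allowed count, since a one-element poset has a unique labeling. (Equivalently, one may interpret the cited formula with the convention $1/m!=0$ for negative integers $m$, under which single-element components simply drop out.) Components with $n_i=2$ need no special handling, since there $t_i\le 1=(n_i-1)!$ and $1/(n_i-2)!=1=n_i-1$, matching the general computation.

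I do not expect a genuine obstacle inside this lemma — the inequality is immediate once the formula of \cite{DK20} is available, and the $n_i\le 2$ discussion above is purely cosmetic bookkeeping. The significance of the statement is downstream: since $(n-r)(n-2)!\le (n-1)!$ whenever $r\ge 1$, it reduces \Cref{conj: (n-1)!} to the case of connected posets, so the substantive work — establishing the bound $t_i\le (n_i-1)!$ for connected $P_i$ — is deferred to the results that follow (and, for components with a unique minimal element, already handled by \Cref{cor:tangled unique minimal element}).
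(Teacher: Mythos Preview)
Your argument is correct and is essentially identical to the paper's proof: substitute $t_i\le(n_i-1)!$ into the formula $(n-2)!\sum_i t_i/(n_i-2)!$, simplify each summand to $n_i-1$, and sum to $(n-r)(n-2)!$. Your extra paragraph handling the degenerate case $n_i=1$ is a nice bit of care that the paper itself elides.
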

\begin{proof}
	Substituting $ (n_i-1)!\geq t_i $ into \[(n-2)!\sum_{i=1}^r\frac{t_i}{(n_i-2)!}\leq(n-2)!\sum_{i=1}^r(n_i-1)=(n-2)!(n-r)\] gives the bound.
\end{proof}
\begin{thm}\label{(n-1)!}
	Let $ P $ be an $ n $-element inflated rooted forest poset. Then $ P $ has at most $ (n-1)! $ tangled labelings. Equality holds if and only if $ P $ has a unique minimal element.
\end{thm}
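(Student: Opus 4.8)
The plan is to reduce to the connected case and then extract the bound from the exact count in \Cref{tangled enumeration}. Write $P=P_1\sqcup\cdots\sqcup P_r$ for the connected components, with $n_i=\lvert P_i\rvert$; each $P_i$ is itself an inflation of a rooted tree poset. I would first prove the following claim: \emph{if $P_0$ is a connected inflation of a rooted tree poset with $N$ elements, then $P_0$ has at most $(N-1)!$ tangled labelings, with equality if and only if $P_0$ has a unique minimal element.} Granting the claim, \Cref{reduction to connected case} shows that $P$ has at most $(n-r)(n-2)!$ tangled labelings; when $r\ge 2$ this is at most $(n-2)(n-2)!<(n-1)!$, and when $r=1$ it equals $(n-1)!$ with equality governed by the claim. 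Since a disconnected poset has at least two minimal elements, this yields the bound $(n-1)!$ in general, with equality forcing $r=1$ and $P$ to have a unique minimal element; conversely a unique minimal element forces $r=1$, whereupon equality holds by the claim. This reduces everything to the claim.

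To prove the claim, I would split on whether $P_0$ has a unique minimal element. If it does, \Cref{cor:tangled unique minimal element} applied to the single component $P_0$ gives exactly $(N-1)(N-2)!=(N-1)!$ tangled labelings. If instead $P_0$ has at least two minimal elements, then $P_0$ is an inflation $\varphi\colon P_0\to Q$ of a rooted tree poset $Q$ with at least two leaves, hence $Q$ has a branch vertex. We may assume no vertex of $Q$ has a unique child that is a leaf with a singleton fiber, since absorbing such a leaf into its parent's fiber exhibits $P_0$ as an inflation of a smaller rooted tree without changing the poset $P_0$; this normalization guarantees that every quantity $c_{i,j}-1$ occurring below is a positive integer. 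By \Cref{tangled enumeration} it then suffices to prove that $\displaystyle\sum_{i=1}^{s}\prod_{j=1}^{\omega(i)}\frac{b_{i,j}-1}{c_{i,j}-1}<1$, where $s$ is the number of leaves of $Q$.

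The key is a telescoping induction over $Q$. For a vertex $v$ of $Q$, set $N_v=\sum_{u\le_Q v}\lvert\varphi^{-1}(u)\rvert$, and for a leaf $\ell\le_Q v$ let $g_v(\ell)=\prod_{j=1}^{d}\frac{b_{\ell,j}-1}{c_{\ell,j}-1}$, where $v=u_{\ell,d}$; put $g(v)=\sum_{\ell\le_Q v}g_v(\ell)$, the sum over leaves below $v$, so that $g(r)$ for $r$ the root is exactly the sum to be bounded. I would prove, by induction on the height of $v$ in $Q$, that $g(v)\le 1$, with equality if and only if the subtree of $Q$ rooted at $v$ is a chain. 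The base case ($v$ a leaf) is $g(v)=1$. For the inductive step, if $v$ has children $w_1,\dots,w_m$ then for every leaf $\ell$ below $w_i$ the factor contributed by the edge $w_i\lessdot v$ equals $\frac{N_{w_i}-1}{C-1}$ with $C=\sum_{t=1}^{m}N_{w_t}$, independently of $\ell$, so
\[
g(v)=\sum_{i=1}^{m}g(w_i)\,\frac{N_{w_i}-1}{C-1}\;\le\;\sum_{i=1}^{m}\frac{N_{w_i}-1}{C-1}\;=\;\frac{C-m}{C-1}\;\le\;1,
\]
using $g(w_i)\le 1$ (and $N_{w_i}\ge 2$, from the normalization, so the weights are nonnegative and the sum is not vacuous). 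Equality holds throughout precisely when $m=1$ and $g(w_1)=1$, i.e.\ when the subtree at $v$ is a chain. Since $Q$ has a branch vertex it is not a chain, so $g(r)<1$, which proves the claim and hence the theorem.

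The hardest parts will be verifying the recursion $g(v)=\sum_i g(w_i)\frac{N_{w_i}-1}{C-1}$ and pinning down its equality case, and dealing with the degenerate configurations (a unique child that is a singleton leaf) that make the literal formula of \Cref{tangled enumeration} read $0/0$; the absorption normalization above is designed precisely to remove these. The remaining steps---the passage to connected components via \Cref{reduction to connected case} and the elementary comparison $(n-r)(n-2)!\le(n-1)!$---are routine given the cited results.
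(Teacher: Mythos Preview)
Your argument is correct and proceeds by a genuinely different route from the paper's. Both reduce to the connected case via \Cref{reduction to connected case} and invoke \Cref{tangled enumeration}, but the paper then inducts on the number of leaves of $Q$: assuming $Q$ reduced, it picks two sibling leaves $\ell_{s-1},\ell_s$, forms $P'$ by adding the relation $m_{s-1}\lessdot m_s$ between the corresponding minimal elements of $P$, checks that $P'$ is again an inflated rooted tree (now with $s-1$ leaves), and compares the two instances of the formula term by term to see that $P'$ has strictly more tangled labelings than $P$; iterating down to a unique minimal element and applying \Cref{cor:tangled unique minimal element} finishes. You instead bound the sum $\sum_i\prod_j\frac{b_{i,j}-1}{c_{i,j}-1}$ directly by a structural induction on $Q$ via the recursion $g(v)=\sum_i g(w_i)\,\frac{N_{w_i}-1}{C-1}$. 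Your approach is more self-contained---no auxiliary posets to construct and re-verify as inflations---while the paper's exhibits an explicit monotonicity (merging minimal elements can only increase the tangled count) that is of independent interest. Two small cleanups: the claim ``$N_{w_i}\ge 2$ from the normalization'' is only forced when $m=1$ (for $m\ge 2$ a child may be a singleton leaf, but then its weight is $0$ and $C-1\ge m-1\ge 1$, so the estimate still goes through); and you should remark that your absorption step does not destroy any branch vertex of $Q$, so the normalized tree is still not a chain when $P_0$ has at least two minimal elements.
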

\begin{proof}
	By \Cref{reduction to connected case}, it suffices to prove this for $ P $ an inflated rooted tree, where $ Q $ is the rooted tree and $ \varphi:P\to Q $ the inflation map. Assume without loss of generality that $ Q $ is reduced. We know that the number of tangled labelings of $ P $ is \begin{equation}\label{tangled}
		(n-1)!\sum_{i=1}^{s}\prod_{j=1}^{\omega(i)}\frac{b_{i,j}-1}{c_{i,j}-1}.
	\end{equation} Let $ \ell_1,\ldots,\ell_s $ denote the leaves of $ Q $, and let $ m_1,\ldots,m_s $ be the unique minimal elements of $ \varphi\inverse(\ell_1),\ldots,\varphi\inverse(\ell_s) $, respectively. Suppose without loss of generality that $ \ell_{s-1} $ and $ \ell_s $ have the same parent in $ Q $ (such leaves exist because $ Q $ is assumed to be reduced). For all $ j\neq1 $, $ b_{s-1,j}=b_{s,j} $; for all $ j $, $ c_{s-1,j}=c_{s,j} $. Hence, we may rewrite \eqref{tangled} as \[(n-1)!\left(\sum_{i=1}^{s-2}\prod_{j=1}^{\omega(i)}\frac{b_{i,j}-1}{c_{i,j}-1}+\prod_{j=2}^{\omega(s-1)}\frac{b_{s-1,j}-1}{c_{s-1,j}-1}\left(\frac{b_{s-1,1}+b_{s,1}-2}{c_{s-1,1}-1}\right)\right).\] Now, let $ P' $ be the poset obtained from $ P $ by adding the additional relation $ m_{s-1}\lessdot_{P'}m_s $. Note that the resulting poset is still an inflated rooted tree poset and that $ P' $ is an inflation of $ Q' $, where $ Q' $ is the (reduced) rooted tree poset formed by setting $ \ell_s=\ell_{s-1} $ and reducing if necessary. Let $ \psi:P'\to Q' $ be the corresponding inflation map. Moreover, note that $ Q' $ has $ s-1 $ leaves. For each leaf $ \ell_1',\ldots,\ell_{s-1}' $ in $ Q' $, let $ u_{i,0}',\ldots,u_{i,\omega(i)}' $ denote the unique path from $ \ell_i' $ to the root of $ Q' $. Let \[b_{i,j}'=\sum_{v\leq_{Q'} u_{i,j-1}'}|\psi\inverse(v)|\qquad\mathrm{and}\qquad c_{i,j}'=\sum_{v<_{Q'} u_{\ell,j}'}|\psi\inverse(v)|.\] Note that for $ i\in\{1,\ldots,s-2\} $ and $ j\in\{1,\ldots,\omega(i)\} $, $ b_{i,j}=b_{i,j}' $ and $ c_{i,j}=c_{i,j}' $. Moreover, we also know that when $ i=s-1 $, $ b_{i,j}=b_{i,j}' $ and $ c_{i,j}=c_{i,j}' $ for $ j=2,\ldots,\omega(i) $. When $ j=1 $, we have $ b_{s-1,1}'=b_{s-1,1}+b_{s,1} $ and $ c_{i,j}'=c_{i,j} $. It follows that the number of tangled labelings of $ P' $ is \[(n-1)!\left(\sum_{i=1}^{s-2}\prod_{j=1}^{\omega(i)}\frac{b_{i,j}-1}{c_{i,j}-1}+\prod_{j=2}^{\omega(s-1)}\frac{b_{s-1,j}-1}{c_{s-1,j}-1}\left(\frac{b_{s-1,1}+b_{s,1}-1}{c_{s-1,1}-1}\right)\right).\] Note that $ P' $ has more tangled labelings than $ P $ and that $ P' $ has $ s-1 $ minimal elements. 
	
	The result follows from induction and \Cref{cor:tangled unique minimal element}.
\end{proof}

\section{Open Problems}\label{Open}
In light of \Cref{noninvertibility bound}, it is natural to ask for which rooted tree posets $ \deg(\partial) $ is biggest. The following is motivated by the fact that $ \partial $ is injective if and only if $ P $ is an antichain. Since the degree of noninvertibility of promotion is smallest when $ P $ is an antichain, it seems natural that $ \deg(\partial) $ would be largest when $ P $ is a chain. In Remark 2.4 of \cite{DK20}, it is shown that when $ P $ is an $ n $-element chain, $ \partial $ is dynamically equivalent to the bubble sort map from \cite{Knuth1973} (pages 106-110). Theorem 2.2 of \cite{DP20} tells us that when $ P $ is an $ n $-element chain, $ \deg(\partial)=(n+2)(n+1)/{6} $. \begin{conjecture}
	For any poset $ P $, \[\deg(\partial)\leq\frac{(n+2)(n+1)}{6}.\] In other words, the degree of noninvertibility of promotion is largest when $ P $ is a chain.
\end{conjecture}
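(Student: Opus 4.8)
The plan is to reformulate $\deg(\partial)$ in terms of golden chains, and then to argue that the quantity is maximized, among all $n$-element posets, by the chain. By definition $\deg(\partial)=\frac{1}{n!}\sum_{M}|\partial\inverse(M)|^2$, and by \Cref{L-golden} only the labelings $M$ with $M\inverse(n)$ maximal contribute, and for each such $M$ one has $|\partial\inverse(M)|=|\mathcal C(M)|$, where $\mathcal C(M)$ denotes the set of $M$-golden chains containing $M\inverse(n)$. Hence the conjecture is equivalent to the inequality
\[\sum_{M\in\im(\partial)}|\mathcal C(M)|^2\le\frac{(n+2)!}{6}.\]
To fix the target constant and to see the mechanism, I would first re-derive the chain case directly: for the $n$-element chain, $M\inverse(n)$ is forced to be the top element, the $M$-golden elements form a subchain of some size $g(M)$, so $|\mathcal C(M)|=2^{g(M)-1}$, and $g(M)-1$ equals the number of right-to-left minima of the permutation $M$ induces on the bottom $n-1$ elements. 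Summing $4^{g(M)-1}$ over the $(n-1)!$ such $M$ via the identity $\sum_k c(n-1,k)\,x^k=x(x+1)\cdots(x+n-2)$ at $x=4$, where $c(m,k)$ is the signless Stirling number of the first kind, gives $(n+2)!/6$, consistent with $\deg(\partial)=(n+1)(n+2)/6$ from Theorem 2.2 of \cite{DP20}.

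For the general upper bound I would try an induction that adds comparabilities to $P$, in the spirit of the proof of \Cref{(n-1)!}. Given incomparable elements $x,y\in P$, let $P'$ be the transitive closure of $P\cup\{x<y\}$; since $P'$ has strictly more comparable pairs, iterating this operation turns $P$ into an $n$-element chain $C$ after finitely many steps. The key step would be the monotonicity lemma $\deg_P(\partial)\le\deg_{P'}(\partial)$: combined with $\deg_C(\partial)=(n+1)(n+2)/6$ this immediately yields the conjecture. To attack the lemma one tracks how the two sides of the displayed inequality change under adding the relation $x<y$: this removes $x$ from the set of maximal elements and removes no others, so $|\im(\partial')|\le|\im(\partial)|$ and the sum has fewer terms; what remains is to show that the surviving golden-chain contributions do not drop enough to overcome this, ideally by exhibiting an injection of the relevant configurations — triples $(M,C,C')$ with $C,C'\in\mathcal C(M)$ — for $P$ into those for $P'$.

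The step I expect to be the main obstacle is precisely this monotonicity lemma, and I am not certain it holds without further hypotheses. Adding $x<y$ has two opposing effects on golden chains: it creates new comparabilities, hence potentially longer chains through $M\inverse(n)$, but it also tightens the golden condition, since $x$ becomes $M$-golden only when $M(y)>M(x)$, which can destroy chains that were golden in $P$. Reconciling these effects — or identifying the right partial order on $n$-element posets along which $\sum_M|\mathcal C(M)|^2$ is monotone — appears to contain essentially the full difficulty of the conjecture. A reasonable route to partial progress is to first settle the connected case with a unique maximal element, where $|\im(\partial)|=(n-1)!$ and one already has $\deg(\partial)\ge n$ by the Cauchy--Schwarz argument of \Cref{noninvertibility bound}, and then to treat posets with several maximal elements or several components separately, exploiting the fact that the promotion chain of any labeling lies entirely within one connected component — an analysis parallel to the component formula for tangled labelings (Theorem 3.4 of \cite{DK20}) may supply the needed reduction.
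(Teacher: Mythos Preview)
The statement you are attempting is not proved in the paper: it appears in the Open Problems section as a conjecture, with no proof or proof sketch attached. There is therefore nothing in the paper to compare your argument against; your proposal should be read as an attack on an open problem, not as a reconstruction of an existing proof.

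On the substance: your reformulation via \Cref{L-golden} is correct, and your computation of the chain case is clean and correct. The identification of $g(M)-1$ with the number of right-to-left minima of the induced permutation on $[n-1]$ is right, and evaluating $\sum_k c(n-1,k)\,4^k = 4\cdot 5\cdots(n+2)=(n+2)!/6$ recovers exactly the constant the conjecture predicts. This is a nice independent check of the target.

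The genuine gap is the one you yourself flag: the monotonicity lemma $\deg_P(\partial)\le\deg_{P'}(\partial)$ under adding a single covering relation. You have not proved it, and your discussion already identifies why it is delicate --- adding $x<y$ simultaneously shrinks $\im(\partial)$ (helpful) and tightens the golden condition on $x$ (harmful). The proposed injection of triples $(M,C,C')$ is not constructed, and there is no evidence that such an injection exists in general. In short, your write-up is an honest outline of a plausible strategy together with a correct base case, but it does not constitute a proof of the conjecture, and the paper offers no proof to fall back on. If you want partial results along these lines, the natural first target is to establish the monotonicity lemma within a restricted class (e.g., rooted tree posets, where \Cref{fiber} gives explicit control over $|\mathcal C(M)|$), rather than for arbitrary posets.
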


\Cref{quasi tangled enumeration}, in conjunction with the enumeration of the tangled labelings of inflated rooted forests given in \cite{DP20}, seems to imply that the inflation operation on posets is very compatible with promotion. Thus, it would be a natural next step to study promotion on inflations of non-rooted trees. For example, it would be interesting to enumerate the tangled labelings of inflations of simple posets such as $ N $-posets or $ M $-posets. Doing so might generate new methods for attacking \Cref{conj: (n-1)!}, which may also be refined in the following way (we require $ P $ to be connected because of \Cref{reduction to connected case}):
\begin{conjecture}
	Let $ P $ be a connected $ n $-element poset with $ s $ minimal elements. Then $ P $ has at most $ (n-s)(n-2)! $ tangled labelings.
\end{conjecture}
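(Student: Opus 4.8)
The plan is to establish the conjecture first for connected inflated rooted forests---that is, for inflated rooted tree posets $(P,\vphi)$ with $\vphi\colon P\to Q$, where \Cref{tangled enumeration} already gives the exact count---and then to indicate the substantially harder route toward the general case. Note that when $s=1$ the asserted bound $(n-s)(n-2)!=(n-1)!$ is exactly \Cref{conj: (n-1)!}, so the refinement has genuinely new content only for $s\ge 2$.

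For an inflated rooted tree write the number of tangled labelings as $(n-1)!\,h(r)$, where $r$ is the root of $Q$ and $h(r)=\sum_{i=1}^{s}\prod_{j=1}^{\omega(i)}\frac{b_{i,j}-1}{c_{i,j}-1}$ is the sum, over the $s$ leaves of $Q$, of the products appearing in \Cref{tangled enumeration}. The first step is to recognize this as a weighted sum over root--leaf paths obeying a bottom-up recursion. For a vertex $v$ of $Q$ let $\beta(v)=\sum_{w\le_Q v}|\vphi\inverse(w)|$ be the number of elements of $P$ weakly below $v$, let $\lambda(v)$ be the number of leaves of $Q$ weakly below $v$, and for non-root $v$ let $\gamma(v)=\sum_{w'}\beta(w')$ with $w'$ ranging over the children of the parent of $v$ (so $\gamma(v)$ counts the elements of $P$ strictly below the parent of $v$). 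Since $b_{i,j}$ depends only on $u_{i,j-1}$ and $c_{i,j}$ only on $u_{i,j}$, one checks that $\frac{b_{i,j}-1}{c_{i,j}-1}=g(u_{i,j-1})$ where $g(v):=\frac{\beta(v)-1}{\gamma(v)-1}$, so that $\prod_{j=1}^{\omega(i)}\frac{b_{i,j}-1}{c_{i,j}-1}$ is the product of $g(w)$ over the vertices $w\ne r$ on the path from $\ell_i$ to $r$. Setting $h(v)$ equal to the sum, over leaves $\ell\le_Q v$, of the product of $g(w)$ over the vertices $w$ with $\ell\le_Q w<_Q v$, we get $h(\ell)=1$ at a leaf and, for internal $v$ with children $v_1,\dots,v_c$,
\[
h(v)=\sum_{t=1}^{c}g(v_t)\,h(v_t)=\sum_{t=1}^{c}\frac{\beta(v_t)-1}{B-1}\,h(v_t),\qquad B:=\sum_{t=1}^{c}\beta(v_t)=\beta(v)-|\vphi\inverse(v)|,
\]
with the convention $0/0=1$ (which occurs only when $v$ is the parent of a single leaf).

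The second step is an induction up $Q$ establishing the sharpened estimate $h(v)\le\frac{\beta(v)-\lambda(v)}{\beta(v)-1}$. The base case is the equality $1\le 1$ at a leaf (and likewise in the degenerate case $B=1$). For the inductive step, inserting the hypotheses $h(v_t)\le\frac{\beta(v_t)-\lambda(v_t)}{\beta(v_t)-1}$ into the recursion gives $h(v)\le\frac{B-\lambda(v)}{B-1}$, so it remains to check $\frac{B-\lambda(v)}{B-1}\le\frac{\beta(v)-\lambda(v)}{\beta(v)-1}$; clearing the (positive) denominators and using $B=\beta(v)-|\vphi\inverse(v)|$, the two sides differ by $|\vphi\inverse(v)|\,(1-\lambda(v))\le 0$, the only input being $\lambda(v)\ge 1$ (every vertex lies above at least one leaf). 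Evaluating at $v=r$ yields $h(r)\le\frac{n-s}{n-1}$, hence at most $(n-1)!\cdot\frac{n-s}{n-1}=(n-s)(n-2)!$ tangled labelings; equality forces $\lambda(r)=s=1$, consistent with \Cref{(n-1)!}. This settles the conjecture for all inflated rooted trees, in particular for all rooted tree posets.

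For a general connected $P$ I would attempt an induction on $n$ built on \cite{DK20}, Lemma 3.8: a tangled labeling $L$ has $L\inverse(n)$ equal to some minimal element $m$, and---because $m$ then carries the top label and is minimal, so it never enters a promotion chain until its label has decreased to $1$---\Cref{standardization} identifies the tangled labelings of $P$ with $L\inverse(n)=m$ with a prescribed family of labelings of the $(n-1)$-element poset $P\setminus\{m\}$, up to a correction that records whether $L_{n-2}\inverse(1)$ lies above $m$. Summing over the $s$ minimal elements should then propagate the bound. The main obstacle is precisely this last step: $P\setminus\{m\}$ need not be connected, its number of minimal elements can jump around as $m$ varies, and one must control the correction terms and the sum over $m$ simultaneously---so what seems to be missing is a strengthening of \Cref{reduction to connected case} that keeps track of the number of minimal elements, together with a usable handle on the correction term. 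A reasonable intermediate target, as the paper suggests, is first to extend \Cref{tangled enumeration} to inflations of $N$- and $M$-posets and to check whether the recursion above admits an analogue there.
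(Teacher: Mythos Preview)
This statement is presented in the paper as an open conjecture (in \Cref{Open}); the paper gives no proof. So there is no ``paper's own proof'' to compare against, and your proposal should be read as partial progress on an open problem rather than as a reconstruction of something already established.

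Your argument for inflated rooted trees is correct and is genuinely stronger than what the paper proves. The paper's \Cref{(n-1)!} only obtains the coarser bound $(n-1)!$ for inflated rooted forests, via a merging trick: choose two leaves $\ell_{s-1},\ell_s$ of $Q$ with a common parent, add the relation $m_{s-1}\lessdot m_s$, observe that the tangled count strictly increases, and induct until $s=1$. That argument does not, as written, yield the refined bound $(n-s)(n-2)!$, because it does not control the size of the increase at each merge. Your approach is different and sharper: rewriting $h(r)=\sum_i\prod_j\frac{b_{i,j}-1}{c_{i,j}-1}$ as a path sum with the bottom-up recursion $h(v)=\sum_t\frac{\beta(v_t)-1}{B-1}h(v_t)$, and then proving $h(v)\le\frac{\beta(v)-\lambda(v)}{\beta(v)-1}$ by induction (the key monotonicity $\frac{x-\lambda}{x-1}$ increasing in $x$ for $\lambda\ge1$) gives $h(r)\le\frac{n-s}{n-1}$ directly, hence the conjectured bound for this class, with equality exactly when $s=1$. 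This is a nice strengthening of \Cref{(n-1)!}.

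For general connected $P$ you correctly flag that your sketch is only a plan, not a proof. Deleting a minimal element $m$ and invoking \Cref{standardization} does not straightforwardly reduce the problem: the correction term measuring whether $L_{n-2}^{-1}(1)$ lies above $m$ is precisely the hard part (it is essentially the content of the probabilistic conjecture stated immediately after this one in \Cref{Open}), $P\setminus\{m\}$ can disconnect, and the number of minimal elements can change unpredictably. So the general case remains open, and your honest accounting of the obstacles matches the paper's own assessment.
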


It is also possible to reframe \Cref{conj: (n-1)!} in the following way: Let $ P $ be a connected, $ n $-element poset, and let $ m_1,\ldots,m_s $ be the minimal elements of $ P $. Let $ c:P\to \mathcal{P}([s]) $ be a coloring of $ P $ given by $ i\in c(x) $ if $ x\geq_P m_i $ (here $ \mathcal{P}([s]) $ denotes the power set of $ [s]=\{1,\ldots,s\} $). The following implies \Cref{conj: (n-1)!}. \begin{conjecture}
	With notation as above, \[\Prob\left(c(L_{n-2}\inverse(1))=\{s\}\,|\,L(m_{s-1})=n\right)\geq\Prob\left(c(L_{n-2}\inverse(1))=\{s\}\,|\,L(m_{s})=n\right).\]
\end{conjecture}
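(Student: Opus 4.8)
The plan is to unwind both conditional probabilities into concrete enumerations using the frozen-element machinery, reduce the desired inequality to an injection between labelings of two $(n-1)$-element posets obtained by deleting a minimal element, and then construct that injection; the last step is the genuine obstacle, which is why the statement is only a conjecture.

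First I would rewrite the two sides. Conditioning on $L(m_{s-1})=n$ forces $L\inverse(n)=m_{s-1}$, which is minimal, so $m_{s-1}$ lies in no promotion chain during the first $n-2$ steps and hence $L_{n-2}\inverse(2)=m_{s-1}$. By \Cref{frozen elements} the set $\{L_{n-2}\inverse(1),L_{n-2}\inverse(2)\}$ is a lower order ideal of size two. If $c(L_{n-2}\inverse(1))=\{s\}$ then $m_s\le_P L_{n-2}\inverse(1)$, so $m_s$ lies in this two-element ideal; since $m_s\ne m_{s-1}=L_{n-2}\inverse(2)$, the only possibility is $L_{n-2}\inverse(1)=m_s$, and conversely $L_{n-2}\inverse(1)=m_s$ gives $c(L_{n-2}\inverse(1))=\{s\}$ since $m_s$ is minimal. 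Running the same analysis with $m_s$ in place of $m_{s-1}$ shows that, given $L\inverse(n)=m_s$, the event $c(L_{n-2}\inverse(1))=\{s\}$ holds exactly when $L_{n-2}\inverse(1)$ covers $m_s$ and $\{L_{n-2}\inverse(1),m_s\}$ is a lower order ideal, i.e. exactly when $L$ is tangled (using $L_{n-2}\inverse(2)=m_s$ and the characterization of tangledness from the proof of \Cref{quasi tangled enumeration}). Thus the conjecture is the assertion
\[
\#\{L:L\inverse(n)=m_{s-1},\ L_{n-2}\inverse(1)=m_s\}\ \ge\ \#\{L:L\inverse(n)=m_s,\ L\text{ tangled}\},
\]
both counts taken among the $(n-1)!$ labelings with the indicated value of $L\inverse(n)$.

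Next I would strip off the frozen maximal label. Given a tangled $L$ with $L\inverse(n)=m_s$, the element $m_s$ never enters a promotion chain in the first $n-2$ steps, so by \Cref{standardization} the standardization $\tilde L=\st(L|_{\tilde P})$ on $\tilde P=P\setminus\{m_s\}$ satisfies $\st(L_\gamma|_{\tilde P})=\tilde L_\gamma$ for all $\gamma\le n-2$; moreover $L_{n-2}\inverse(1)$ is a minimal element $w$ of $\tilde P$ that was not minimal in $P$ (its only lower cover in $P$ is $m_s$), and $\tilde L_{n-2}\inverse(1)=w$. This identifies the right-hand count with the number of labelings $\tilde L$ of $\tilde P$ for which $\tilde L_{n-2}\inverse(1)$ is a new minimal element of $\tilde P$. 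Applying the same deletion to $m_{s-1}$ identifies the left-hand count with the number of labelings $\hat L$ of $\hat P=P\setminus\{m_{s-1}\}$ with $\hat L_{n-2}\inverse(1)=m_s$ (note $m_s$ remains minimal in $\hat P$). So it suffices to produce an injection from the first family into the second.

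The natural candidate is a label-relocation map: from a tangled $L$ with $L\inverse(n)=m_s$ and $L_{n-2}\inverse(1)=w$, build $L'$ by cyclically moving the label of $m_s$ onto $m_{s-1}$, the label of $w$ onto $m_s$, and the label of $m_{s-1}$ onto $w$, fixing all other labels; this is a bijection on all labelings and by construction $L'\inverse(n)=m_{s-1}$. One then wants $L'_{n-2}\inverse(1)=m_s$, i.e. that under $L'$ the label $1$ is pulled down through $m_s$ (the unique lower cover of $w$) in the same way it was pulled onto $w$ under $L$. Verifying this — understanding exactly how swapping the labels attached to two minimal elements and to an element covering only one of them perturbs all $n-2$ promotion chains — is the hard part, and I expect it to be the main obstacle. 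For inflated rooted trees the ``pulls down'' analysis of \Cref{Quasi-Tangled} and the Probability Lemma (\Cref{probability lemma}) give exact control of where distinguished labels travel and one can hope to push the injection through, but for a general connected poset promotion chains branch and merge in ways the present technology does not track; a new idea is needed, and it is plausible that the cleanest route is instead an induction on $n$ that deletes a minimal element and compares the tangled counts of $P$ with those of the two smaller posets above, in the spirit of the proof of \Cref{(n-1)!}.

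Finally, for the stated purpose — deducing \Cref{conj: (n-1)!} — only the above inequality together with its analogues for every ordered pair of minimal elements is needed (these are the same statement after relabeling $m_1,\dots,m_s$), after which a short double count closes the argument: writing $q_b=\Prob(L\text{ tangled}\mid L\inverse(n)=m_b)$ and $S_b=c\inverse(\{b\})$, the (reduced) inequalities give $\sum_a\Prob(L_{n-2}\inverse(1)\in S_b\mid L\inverse(n)=m_a)\ge s\,q_b$ for each $b$, since each $a\ne b$ term is at least $q_b$ and the $a=b$ term equals $q_b$; summing over $b$ and using that the $S_b$ are pairwise disjoint gives $s\sum_b q_b\le s$, so $P$ has $(n-1)!\sum_b q_b\le(n-1)!$ tangled labelings.
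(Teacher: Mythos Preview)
This statement is an open conjecture in the paper, not a theorem; the paper offers no proof, only the remark that the conjecture would imply \Cref{conj: (n-1)!} via the strategy of \Cref{(n-1)!}. You correctly recognise this and present a strategy rather than a claimed proof.

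Your unwinding of the two conditional probabilities is correct. Given $L\inverse(n)=m_{s-1}$, the element $m_{s-1}$ never enters a promotion chain, so $L_{n-2}\inverse(2)=m_{s-1}$, and since $\{L_{n-2}\inverse(1),L_{n-2}\inverse(2)\}$ is a lower order ideal (\Cref{frozen elements}), the event $c(L_{n-2}\inverse(1))=\{s\}$ indeed forces $L_{n-2}\inverse(1)=m_s$. Likewise, given $L\inverse(n)=m_s$, one has $L_{n-2}\inverse(2)=m_s$ and the two-element lower-order-ideal constraint shows $c(L_{n-2}\inverse(1))=\{s\}$ is equivalent to $L_{n-2}\inverse(1)>_P m_s$, i.e.\ to $L$ being tangled. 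So the conjecture is exactly the count inequality you write down, and your deletion step via \Cref{standardization} is sound. The injection you propose is a natural candidate, and you are right that tracking how the three-cycle on $\{m_s,w,m_{s-1}\}$ perturbs all $n-2$ promotion chains in a general connected poset is precisely the missing idea; nothing in the paper supplies it either.

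Where you genuinely diverge from the paper is in the final paragraph. The paper suggests that the conjecture feeds into the monotonicity argument of \Cref{(n-1)!} (adding the relation $m_{s-1}\lessdot m_s$ increases the tangled count, then induct down to a unique minimal element). Your route is more direct and avoids modifying $P$: with $q_b=\Prob(L\text{ tangled}\mid L\inverse(n)=m_b)$, the conjecture (applied to every ordered pair of minimal elements) gives $\sum_a\Prob\bigl(c(L_{n-2}\inverse(1))=\{b\}\mid L\inverse(n)=m_a\bigr)\ge s\,q_b$, and since for each fixed $a$ the events $c(L_{n-2}\inverse(1))=\{b\}$ partition the probability space (the lower-order-ideal argument forces $c(L_{n-2}\inverse(1))$ to be a singleton), the inner sums are each exactly $1$, giving $\sum_b q_b\le 1$ and hence at most $(n-1)!$ tangled labelings. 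This double-count is clean and correct; it is a nice alternative to the paper's sketched reduction.
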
 If the above holds, we may apply the same argument as in \Cref{(n-1)!} to show that the number of tangled labelings increases when we make $ m_{s-1}\lessdot m_s $. Applying this fact repeatedly would prove \Cref{conj: (n-1)!}, since posets with a unique minimal element have exactly $ (n-1)! $ tangled labelings.

\Cref{quasi tangled enumeration} and \Cref{tangled enumeration} together motivate the following conjectures and question: \begin{conjecture}
	If $ P $ is an inflated rooted tree poset with deflated leaves, then the number of tangled labelings of $ P $ is less than or equal to the number of quasi-tangled labelings of $ P $.
\end{conjecture}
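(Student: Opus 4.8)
The plan is to compare the two explicit enumerations head-on. Abbreviate $P_\ell=\prod_{j=2}^{\omega(\ell)}\frac{b_{\ell,j}-1}{c_{\ell,j}-1}$, $\widetilde P_\ell=\prod_{j=2}^{\omega(\ell)}\frac{b_{\ell,j}-2}{c_{\ell,j}-2}$, and $Q_\ell=\prod_{j=2}^{\omega(\ell)}\frac{(b_{\ell,j}-1)(b_{\ell,j}-2)}{(c_{\ell,j}-1)(c_{\ell,j}-2)}$. By equation \eqref{eq: tangled} (obtained in the proof of \Cref{quasi tangled enumeration}), the number of tangled labelings of $P$ is $(n-1)!\big(\sum_{\ell\in T}P_\ell+\sum_{\ell\in R}P_\ell\big)$, while by \Cref{quasi tangled enumeration} the number of quasi-tangled labelings is $(n-1)!\big(2\sum_{\ell\in T}P_\ell-\tfrac1{n-1}\sum_{\ell\in T}\widetilde P_\ell+2\sum_{\ell\in R}P_\ell+\sum_{\ell\in S}Q_\ell\big)$. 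Subtracting, the difference (quasi-tangled minus tangled) equals
\[
(n-1)!\left(\sum_{\ell\in T}\Big(P_\ell-\tfrac1{n-1}\widetilde P_\ell\Big)+\sum_{\ell\in R}P_\ell+\sum_{\ell\in S}Q_\ell\right),
\]
so the entire conjecture reduces to showing that this quantity is nonnegative.

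Each of $P_\ell$, $\widetilde P_\ell$, $Q_\ell$ is a product of ratios of the form $\tfrac{b_{\ell,j}-t}{c_{\ell,j}-t}$ with $t\in\{1,2\}$, which occur as conditional probabilities in the Probability Lemma (\Cref{probability lemma}) and hence lie in $[0,1]$; in particular $\sum_{\ell\in R}P_\ell\ge0$ and $\sum_{\ell\in S}Q_\ell\ge0$, so it remains to show $\sum_{\ell\in T}\big(P_\ell-\tfrac1{n-1}\widetilde P_\ell\big)\ge0$. For this it suffices to prove the pointwise bound $\widetilde P_\ell\le P_\ell$ for each $\ell\in T$, because then $\tfrac1{n-1}\sum_{\ell\in T}\widetilde P_\ell\le\tfrac1{n-1}\sum_{\ell\in T}P_\ell\le\sum_{\ell\in T}P_\ell$ (using $n\ge2$). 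The pointwise bound is arithmetic: from $u_{\ell,j-1}\lessdot_Q u_{\ell,j}$ we get $\{v\le_Q u_{\ell,j-1}\}\subseteq\{v<_Q u_{\ell,j}\}$, hence $b_{\ell,j}\le c_{\ell,j}$ for all $j$; and for integers $b\le c$ the difference $(b-1)(c-2)-(b-2)(c-1)$ equals $c-b\ge0$, so $\frac{b_{\ell,j}-2}{c_{\ell,j}-2}\le\frac{b_{\ell,j}-1}{c_{\ell,j}-1}$ whenever the denominators are positive, and multiplying over $j=2,\dots,\omega(\ell)$ gives $\widetilde P_\ell\le P_\ell$.

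The step that needs genuine care, and that I expect to be the main obstacle, is checking that for $\ell\in T$ the factors of $\widetilde P_\ell$ (and of $P_\ell$) are honest numbers in $(0,1]$ — i.e.\ that $b_{\ell,j}>2$ and $c_{\ell,j}>2$ for all $j\ge2$ — so that the arithmetic step is legitimate and no denominator vanishes. This is exactly where the hypotheses on $P$ are used: by \Cref{reduced} we may assume the subposet of $Q$ obtained by removing its leaves is reduced, and then the definition of $T$ forces $|\varphi^{-1}(u_{\ell,1})|\ge2$ (if $\varphi^{-1}(u_{\ell,1})$ were a single element, then $u_{\ell,2}$ would be the parent of the unique child $u_{\ell,1}$ in that reduced subtree, which is not allowed); since $u_{\ell,1}$ has $\ell$ as its only child in $Q$, this gives $b_{\ell,2}=1+|\varphi^{-1}(u_{\ell,1})|\ge3$, whence $c_{\ell,j}\ge c_{\ell,2}\ge b_{\ell,2}\ge3$ and (by monotonicity in $j$) $b_{\ell,j}\ge3$ for all $j\ge2$. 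The corresponding degeneracies for $\ell\in R$ or $\ell\in S$ need not be analyzed, since those summands entered only through the trivial bound $\ge0$. Granting this bookkeeping, the inequality chain above proves the conjecture; it in fact shows the gap is at least $(n-1)!\,\tfrac{n-2}{n-1}\sum_{\ell\in T}P_\ell+(n-1)!\sum_{\ell\in R}P_\ell$, so the inequality is strict whenever $R\cup T\neq\varnothing$ (note that $R\cup T\ne\varnothing$ forces $n\ge3$).
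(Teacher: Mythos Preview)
The paper offers no proof of this statement: it is listed as an open conjecture in \Cref{Open}. So there is nothing to compare against, and the relevant question is simply whether your argument is sound. It is.

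Your reduction is exactly right. Using the paper's own equation~\eqref{eq: tangled} for the tangled count and \Cref{quasi tangled enumeration} for the quasi-tangled count, the difference is
\[
(n-1)!\left(\sum_{\ell\in T}\Big(P_\ell-\tfrac{1}{n-1}\widetilde P_\ell\Big)+\sum_{\ell\in R}P_\ell+\sum_{\ell\in S}Q_\ell\right),
\]
and all summands over $R$ and $S$ are nonnegative (they are products of the conditional probabilities appearing in \Cref{conditional helper} and \Cref{helper}). The only nontrivial point is the termwise bound $\widetilde P_\ell\le P_\ell$ for $\ell\in T$, and your factor-by-factor inequality $\frac{b-2}{c-2}\le\frac{b-1}{c-1}$ (valid whenever $2<b\le c$) handles this, once one checks $b_{\ell,j},c_{\ell,j}\ge 3$ for $j\ge 2$.

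Your verification of that lower bound is also correct and is where the ``deflated leaves'' hypothesis together with \Cref{reduced} is genuinely used. Since $\ell\in T$ forces $u_{\ell,1}$ to have $u_{\ell,0}$ as its unique child in $Q$, if $|\varphi^{-1}(u_{\ell,1})|=1$ then the parent $y$ of $x$ would be the minimal element of $\varphi^{-1}(u_{\ell,2})$, and the condition that $x$ be the unique child of $y$ would force $u_{\ell,1}$ to be the unique child of $u_{\ell,2}$ in $Q$; but $u_{\ell,1}$ is not a leaf of $Q$, so this contradicts the reducedness of $Q\setminus\{\text{leaves}\}$. Hence $|\varphi^{-1}(u_{\ell,1})|\ge 2$, and $b_{\ell,2}=|\varphi^{-1}(u_{\ell,0})|+|\varphi^{-1}(u_{\ell,1})|\ge 3$; monotonicity in $j$ then gives $b_{\ell,j},c_{\ell,j}\ge 3$ for all $j\ge 2$. (When $\omega(\ell)=1$ the products are empty and there is nothing to check.)

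In short, you have proved the conjecture by a direct and elementary comparison of the two explicit formulas, something the paper did not carry out. The only cosmetic remark is that the appeal to the Probability Lemma to justify $P_\ell,Q_\ell\in[0,1]$ is really an appeal to \Cref{conditional helper}/\Cref{helper}; alternatively, since you have already shown $b_{\ell,j}\ge 2$ and $c_{\ell,j}\ge b_{\ell,j}$ for $j\ge 2$ (and $\ge 3$ in the relevant cases), these bounds follow purely arithmetically.
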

\begin{conjecture}
	Let $ P $ be an $ n $-element poset. Then the number of labelings $ L:P\to[n] $ such that $ L_{n-3}\not\in\call(P) $ is at most $ 3(n-1)! $.
\end{conjecture}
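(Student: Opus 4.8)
The plan is to prove the equivalent statement that an $n$-element poset $P$ has at most $3(n-1)!$ labelings that are tangled or quasi-tangled --- since $L_{n-3}\notin\call(P)$ says precisely that the sorting time of $L$ exceeds $n-3$ --- by first upgrading \Cref{L^-1(n) position} to arbitrary posets and then bounding the three resulting cases separately by $(n-1)!$ each.

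\textbf{Step 1 (trichotomy for arbitrary posets).} By \Cref{lower order ideal of size 3} with $k=3$, if $L_{n-3}\notin\call(P)$ then $Y=\{L_{n-3}\inverse(1),L_{n-3}\inverse(2),L_{n-3}\inverse(3)\}$ is a non-antichain lower order ideal of size $3$ on which $L_{n-3}$ is not a linear extension. Over a general poset there are four isomorphism types of such ideals: a $3$-element chain, the ``$\mathsf V$'' poset $\{m_1\lessdot v,\ m_2\lessdot v\}$, the ``$\Lambda$'' poset $\{c\lessdot a,\ c\lessdot b\}$, and the disjoint union of a point and a $2$-chain. For each type one identifies the inversion of $L_{n-3}$ witnessing non-linearity and feeds it into \Cref{inversion}, applied repeatedly from $\gamma=n-3$ down to $\gamma=1$, just as in the present proof of \Cref{L^-1(n) position} (which never used the rooted-tree hypothesis beyond enumerating the ideal types). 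The chain, $\mathsf V$, and disjoint cases go through verbatim; the only genuinely new case is $\Lambda$, where non-linearity forces $L_{n-3}\inverse(1)\in\{a,b\}$, and since $c\lessdot_P a$ and $L_{n-3}(c)\in\{2,3\}$, repeated use of \Cref{inversion} gives $L(c)=L_{n-3}(c)+(n-3)\in\{n-1,n\}$ with $c$ minimal. Hence: if $L_{n-3}\notin\call(P)$ then $L\inverse(n-1)$ is minimal, or $L\inverse(n)$ is minimal, or $L\inverse(n)$ covers a minimal element.

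\textbf{Step 2 (partition and bound).} Partition $\{L:L_{n-3}\notin\call(P)\}$ into $\mathcal A$ (those with $L\inverse(n)$ minimal), $\mathcal B$ (those with $L\inverse(n-1)$ minimal but $L\inverse(n)$ not), and $\mathcal C$ (those with $L\inverse(n)$ covering a minimal element, with neither $L\inverse(n-1)$ nor $L\inverse(n)$ minimal); it suffices to show $|\mathcal A|,|\mathcal B|,|\mathcal C|\le(n-1)!$. The useful observation is that a minimal element $m$ with $L(m)=n$ (resp.\ $n-1$) never enters a promotion chain during the first $n-3$ steps, so $L_{n-3}(m)=3$ (resp.\ $2$); thus for $L\in\mathcal A$ one has $L_{n-3}\inverse(3)=m$, and whether $L_{n-3}\notin\call(P)$ is then determined by where $L_{n-3}\inverse(1),L_{n-3}\inverse(2)$ land relative to $m$ (similarly for $\mathcal B$; for $\mathcal C$ one must also track whether the parent $L\inverse(n)$ enters a promotion chain, as handled in \Cref{A2}). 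To bound $|\mathcal A|$, condition on $L\inverse(n)=m$ and sum over minimal $m$: writing $p_m$ for the conditional probability that a uniformly random such labeling has $L_{n-3}\notin\call(P)$, one wants $\sum_m p_m\le 1$, where each $p_m$ is a product of branching ratios in the style of \Cref{helper}, the point being that the finitely many size-$3$ lower order ideals that can witness $L_{n-3}\notin\call(P)$ all lie in a small neighbourhood of $m$, and these neighbourhoods are essentially disjoint over distinct minimal $m$. A preliminary reduction to connected $P$ --- via a disjoint-union formula for $\#\{L:L_{n-3}\notin\call(P)\}$ in the spirit of \cite{DK20}, Theorem~3.4, plus the direct evaluation $3(n-1)!-(n-2)!$ for connected posets with a unique minimal element (the corollary following \Cref{quasi tangled enumeration}) --- would organize the casework.

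\textbf{Main obstacle.} The probabilistic machinery of \Cref{Quasi-Tangled} --- the ``pulls down'' sequences, \Cref{label distribution}, \Cref{standardization}, and especially the Probability Lemma (\Cref{probability lemma}) --- is tailored to rooted-tree-like posets: \Cref{promotion dependence} requires that every element comparable with an element below $x$ be comparable with $x$, which is exactly the rooted-tree condition and fails in general, where a promotion chain may leave a lower order ideal, pass through incomparable elements, and re-enter. Making the estimates $\sum_m p_m\le 1$ rigorous for arbitrary posets therefore requires genuinely generalizing this machinery, and that is where I expect the real work to be. A less ambitious but still worthwhile fallback is to establish $\#\{L:L_{n-3}\notin\call(P)\}\le 3(n-1)!$ for all inflated rooted forests by combining \Cref{quasi tangled enumeration}, \Cref{tangled enumeration}, and the ``merge two minimal elements'' induction of \Cref{(n-1)!}, leaving the general case open.
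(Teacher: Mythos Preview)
The statement you are attempting to prove is a \emph{conjecture} in the paper's Open Problems section; the paper offers no proof, so there is nothing to compare your approach against. Your proposal is therefore not being measured against a known argument but against an open problem, and you yourself correctly identify in your ``Main obstacle'' paragraph that the proposal is not a proof: Step~2 is a sketch of what one would \emph{like} to do, not something you have actually carried out.

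Concretely, your Step~1 is fine --- the trichotomy of \Cref{L^-1(n) position} really does extend to arbitrary posets once the $\Lambda$-ideal case is added, since the argument uses only \Cref{lower order ideal of size 3} and \Cref{inversion}. But Step~2 is where the entire content lies, and the bound $\sum_m p_m\le 1$ that you need for each of $\mathcal A,\mathcal B,\mathcal C$ is of exactly the same flavor as the already-open \Cref{conj: (n-1)!}: indeed, the $\mathcal A$-bound alone (labelings with $L\inverse(n)$ minimal and $L_{n-3}\notin\call(P)$, at most $(n-1)!$) would immediately imply \Cref{conj: (n-1)!}, since every tangled labeling has $L\inverse(n)$ minimal and $L_{n-3}\notin\call(P)$. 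So your plan does not reduce the conjecture to something easier; it reduces it to three statements, each at least as hard as another open conjecture in the same paper. Your fallback suggestion --- handle inflated rooted forests via \Cref{quasi tangled enumeration}, \Cref{tangled enumeration}, and the merging induction of \Cref{(n-1)!} --- is a reasonable partial result to pursue, but it too would leave the general conjecture open.
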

\begin{question}
	What is the maximum number of quasi-tangled labelings a poset can have?
\end{question}

\section*{Acknowledgments}
This research was conducted at the Duluth Summer Mathematics Research Program for Undergraduates at the University of Minnesota Duluth with support from Jane Street Capital, the National Security Agency (grant H98230-22-1-0015), the National Science Foundation (grant DMS-2052036), and Harvard University. My research advisors Noah Kravitz, Colin Defant, and Amanda Burcroff provided generous guidance and feedback during the research process for which I am very grateful. I would like to extend special thanks to Swapnil Garg and Noah Kravitz for their invaluable suggestions during the editing process. I would also like to thank Aleksa Milojevi\'{c} for noticing a mistake in one of my proofs early on in the research process. Finally, I am deeply grateful to Joe Gallian for his support and for giving me the opportunity to participate in his research program.

\bibliographystyle{amsalpha}
\bibliography{bib}
\end{document}